\definecolor{aleacolor}{rgb}{0.16,0.59,0.78}
\newcommand*\colvec[1]{
	\global\colveccount#1
	\begin{pmatrix}
		\colvecnext
	}
	\def\colvecnext#1{
		#1
		\global\advance\colveccount-1
		\ifnum\colveccount>0
		\\
		\expandafter\colvecnext
		\else
	\end{pmatrix}
	\fi
}
\newcommand{\ndN}{\mathbb{N}}
\newcommand{\ndR}{\mathbb{R}}
\newcommand{\ndC}{\mathbb{C}}
\renewcommand{\Pr}[1]{\mathbb{P}(#1)}
\newcommand{\Prb}[1]{\mathbb{P}\left(#1\right)}
\newcommand{\Ex}[1]{\mathbb{E}[#1]}
\newcommand{\Exb}[1]{\mathbb{E}\left[#1\right]}
\newcommand{\Va}[1]{\mathbb{V}[#1]}
\newcommand{\one}{{\mathbbm{1}}}
\newcommand{\convd}{\,{\buildrel \mathrm{d} \over \longrightarrow}\,}
\newcommand{\convp}{\,{\buildrel \mathrm{p} \over \longrightarrow}\,}
\newcommand{\eqdist}{\,{\buildrel \mathrm{d} \over =}\,}
\newcommand{\Poch}[2]{\bigl(#1\bigr)_{#2}}
\newcommand{\GPoch}[3]{\bigl(#1 \!\mid\! #2\bigr)_{#3}}
\newcommand{\cA}{\mathcal{A}}
\newcommand{\cC}{\mathcal{C}}
\newcommand{\cE}{\mathcal{E}}
\newcommand{\cG}{\mathcal{G}}
\newcommand{\cH}{\mathcal{H}}
\newcommand{\cO}{\mathcal{O}}
\newcommand{\cP}{\mathcal{P}}
\newcommand{\cS}{\mathcal{S}}
\newcommand{\cT}{\mathcal{T}}
\newcommand{\cU}{\mathcal{U}}
\newcommand{\cW}{\mathcal{W}}
\newcommand{\mC}{\mathsf{C}}
\newcommand{\mG}{\mathsf{G}}
\newcommand{\mH}{\mathsf{H}}
\newcommand{\mO}{\mathsf{O}}
\newcommand{\mR}{\mathsf{R}}
\newcommand{\mS}{\mathsf{S}}
\newcommand{\mT}{\mathsf{T}}
\newcommand{\pat}{\mathrm{pat}}
\newtheorem{theorem}{Theorem}[section]
\newtheorem{corollary}[theorem]{Corollary}
\newtheorem{proposition}[theorem]{Proposition}
\newtheorem{lemma}[theorem]{Lemma}
\newtheorem{definition}[theorem]{Definition}
\numberwithin{equation}{section}
\keywords{Poisson-Dirichlet processes, graphons, permutons}
\title{\textbf{Poisson-Dirichlet graphons and permutons}}
\date{}
\author{Benedikt Stufler}
\address[Benedikt Stufler]{Vienna University of Technology}
\email{benedikt.stufler at tuwien.ac.at}
\begin{document}

\vspace {-0.5cm}

\begin{abstract}
	We introduce classes of supergraphs and superpermutations with novel universal graphon and permuton limiting objects whose construction involves the two-parameter Poisson-Dirichlet process introduced by Pitman and Yor~(1997). We demonstrate the universality of these limiting objects through general invariance principles in a heavy-tailed regime and establish a  comprehensive phase diagram for the asymptotic shape of superstructures.  
\end{abstract}


\maketitle

\section{Introduction}

A recurring theme in modern probability is that seemingly unrelated discrete models admit common  universal limiting  objects.  Classic examples include invariance principles for partial sums, leading to Brownian motion, and scaling limits for random trees, leading to the Brownian continuum random tree introduced by Aldous~\cite{MR1085326}. In this  paper we identify a new family of universal limit objects that we call Poisson--Dirichlet graphons and permutons. We show that they arise universally when large random graphs and permutations are assembled by a composition (or substitution) mechanism in a heavy-tailed regime. 

It is well known that  many discrete structures admit decompositions into substructures. Examples include graphs built from modules, permutations built from substitution blocks, or trees built using grafting rules. 
Motivation for the mathematical study of these objects comes from data structures in computer science. Efficient structures often organise information by partitioning it into smaller parts and implementing rules to navigate between and within these parts~\cite{MR1370950}.

To formalise this systematically, we work with \emph{supergraphs} and \emph{superpermutations}: an object is specified by a \emph{head} (encoding inter-class relations) together with a family of \emph{components} (encoding intra-class relations), and we retain enough rooting information to make the decomposition unique. For graphs, each component governs adjacency within its vertex class, while the head governs adjacency between different classes.  For permutations, the component orders the elements inside a block, while the head permutation orders the blocks relative to each other. We assign non-negative weights to the head and component structures, and define a random superstructure of a given size $n$ generated with probability proportional to the product of these weights.

Our focus is on a heavy-tailed regime in which randomly generated superstructures typically exhibit a large number of macroscopic components. The limiting mass distribution of  component sizes is given by the two-parameter Poisson-Dirichlet point process introduced by Pitman and Yor~\cite{MR1434129}. We recall relevant properties of this process in Section~\ref{sec:twoparamproc}, and  determine  component size asymptotics, stated in Theorem~\ref{te:gibbsdilutegene}, in Section~\ref{sec:gibbs}. Our proof makes use of local large deviation asymptotics and renewal theorems  by Doney~\cite{MR1440141}. 


We proceed to study randomly generated superstructures in this regime through the lens of modern limit theories for dense graphs and permutations. 

The  theory of  graph limits was developed in a series of papers, see~\cite{MR2455626, MR2925382, MR2277152, MR2249277, MR2257396, MR2226430, MR2274085, MR2314569, MR2368030, MR2478356, MR2733066}.  Limiting objects, called graphons, are symmetric measurable functions $W: [0,1]^2 \to [0,1]$. They encode asymptotic subgraph densities. We summarise the required background of this  theory in Section~\ref{sec:graphons} following the exposition in~\cite{MR2463439}. 

In Section~\ref{sec:supergraphs} we introduce the Poisson-Dirichlet graphon $W_{\mathrm{PD}}(\alpha, \theta, L_{\cH}, L_{\cC})$, see Definition~\ref{def:wg}, as a random graphon constructed from a two-parameter Poisson-Dirichlet process $\mathrm{PD}(\alpha, \theta)$ and the laws $L_{\cH}$ and $L_{\cC}$ of two random graphons that represent the continuous analogues of head and component structures. Roughly speaking, the Poisson-Dirichlet graphon is constructed by taking an infinite number of independent samples from $L_{\cC}$, rescaling them according to the points of a ${\mathrm{PD}}(\alpha, \theta)$ process, placing them on the diagonal of  the unit square $[0,1]^2$, and determining adjacency between them according to the random graph on countably infinitely many vertices created from a single instance of a random graphon with law $L_{\cH}$. We characterise the marginal distributions of the Poisson-Dirichlet graphon using a two-parameter Chinese-restaurant process.

Our main invariance principle, stated in Theorem~\ref{te:main1}, shows distributional convergence of random supergraphs in a heavy-tailed \emph{dilute} regime towards Poisson-Dirichlet graphons. In this regime, we assume that the partition function $c_n$ of components is up to an exponential tilt regularly varying with index $-1-\beta$ for some $-\infty < \beta < 1$ and the  partition function $h_n$ of head-structures satisfies that after an exponential tilting, their tail sums vary regularly with index $-\alpha$ for $0<\alpha<1$. If isolated random head structures admit a random graphon with law $L_{\cH}$ as graph limit, and an isolated single large component admits a random graphon with law $L_{\cC}$, then in this dilute regime the limiting graphons combine to the Poisson-Dirichlet $W_{\mathrm{PD}}(\alpha, \theta, L_{\cH}, L_{\cC})$ for $\theta = -\alpha \beta$. We conclude with a comprehensive phase diagram of \emph{repellent} regimes that exhibit contrasting behaviour. Specifically, if the partition functions determine that we are in the \emph{dense} regime, the limiting law is given by~$L_\cH$, see Theorem~\ref{te:dense}. In the \emph{condensation} regime, the limiting law is given by~$L_{\cC}$, see Theorem~\ref{te:condensation}. And in the \emph{mixture} regime, the limiting law is a non-degenerate mixture of the laws $L_\cH$ and $L_{\cC}$, see Theorem~\ref{te:mixture}.

The construction of the Poisson-Dirichlet graphon may be iterated by setting $L_\cH$ or $L_\cC$ to the law of a Poisson-Dirichlet graphon. These multidimensional constructions occur  ``in the wild'' as limits of concrete combinatorial models of random graphs. 

We provide an example related to the prominent class $\cO$ of complement reducible graphs, also called cographs. This class may be characterised as the smallest class of finite simple graphs that contains the graph with one vertex and is closed under taking complements and forming finite disjoint unions. It was studied from enumerative~\cite{MR2154567} and algorithmic~\cite{MR2443114} viewpoints and it has some relevance in computer science, as certain hard computational problems may be solved in polynomial time if the input is restricted to cographs~\cite{MR3942334}. A uniform random complement reducible graph with a given number $n$ of vertices converges in distribution towards a random graphon called the Brownian graphon~\cite[Thm. 1]{zbMATH07749496}. An independent proof via different methods is given in~\cite{zbMATH07749513}. The Brownian graphon may be constructed from a  Brownian excursion~\cite[Sec. 4.2]{zbMATH07749496}, and its marginal distributions are related to the finite dimensional marginals of the Brownian tree. We recall relevant background for these marginals in Section~\ref{sec:browtre}.  

In Section~\ref{sec:corsu} we introduce classes of multidimensional iterated cographs. Specifically, we set $\cO_1 = \cO$, and for each $d \ge 2$ we let $\cO_d$ denote the class of supergraphs with head structures from $\cO$ and component structures from the class of ordered pairs of a marked vertex and a graph from $\cO_{d-1}$ (with no edges between the marked vertex and its corresponding graph from $\cO_{d-1}$).  This way, the  marked vertices in the components form a coarse ``prototype'' with the adjacency relation inherited precisely from the head structure. We call graphs from $\cO_d$ $d$-dimensional complementary reducible graphs with a prototype. We determine that their number  asymptotically grows like
\[
	 \frac{2^{-d}}{\Gamma(1- 2^{-d})} (2 \sqrt{2\log(2)-1})^{2-2^{-(d-1)}} n^{-1-2^{-d}} (2\log(2)-1)^{-n} n!.
\]
With $L_1$ denoting the law of the Brownian graphon $W_{\mathrm{Brownian}}$, we show in Theorem~\ref{te:main2} that a uniform random $d$-dimensional cograph with a prototype $\mO_{n,d}$ admits a limiting random graphon whose law $L_d$ is that of an iterated Poisson-Dirichlet graphon, specifically
\[
\mO_{n,d} \convd W_{\mathrm{PD}}(2^{-(d-1)}, -2^{-d}, L_1, L_{d-1}).
\]
Using Pitman's coagulation-fragmentation duality~\cite{zbMATH01496111}, we proceed to verify
\[
	W_{\mathrm{PD}}(2^{-(d-1)}, -2^{-d}, L_{1}, L_{d-1}) \,\, \eqdist \,\, W_{\mathrm{PD}}(1/2, -2^{-d},  L_{d-1}, L_{1}).
\]
This means we may let the Brownian graphon either govern the head or the components.
We close the section by studying a model of $2$-dimensional cographs with a component weight $q>0$. That is, we set head and component structures to cographs and draw a cograph of a given size $n$ with probability proportional to $q$ raised to the number of components. Our main result is that the phase diagram for graph limits admits a unique ``spike'' at the critical value $q=2 \log(2)-1$:
\[
	\mO^{\langle q \rangle}_n \convd \begin{cases}
		W_{\mathrm{Brownian}}, &q \neq 2 \log(2)-1 \\
		W_{\mathrm{PD}}(1/2, -1/4,  L_1, L_{1}), &q = 2 \log(2)-1.
	\end{cases}
\]

The second half of this manuscript is dedicated to developing a parallel theory for permutations. Limits of random permutations are called permutons~\cite{zbMATH06126921}. A permuton is a probability measure on the unit square $[0,1]$ with uniform marginal distributions. They serve to encode  asymptotic pattern densities in sequences of finite permutations. Section \ref{sec:permlim} recalls relevant background.

In Section \ref{sec:superperm} we introduce the Poisson-Dirichlet permuton $\mu_{\mathrm{PD}}(\alpha, \theta, L_{\cH}, L_{\cC})$ as a random permuton that is constructed from a $\mathrm{PD}(\alpha, \theta)$ process and laws $L_{\cH}, L_{\cC}$ of random permutons. Roughly speaking, we take the points of the Poisson-Dirichlet process as coefficients in a linear combination of push-forwards of independent samples of $L_{\cC}$ to square windows which are ordered using a single instance of $L_{\cH}$. The formal construction is given in Definition~\ref{def:pdperm}. We characterise induced finite permutations by $\mu_{\mathrm{PD}}(\alpha, \theta, L_{\cH}, L_{\cC})$ in terms of an $(\alpha,\theta)$ Chinese restaurant process and finite permutations induced by the random permutons with laws $L_{\cH}$ and $L_{\cC}$. Our main invariance principle, Theorem~\ref{te:main3} shows how Poisson-Dirichlet permutons arise as limits of random finite permutations in a heavy-tailed \emph{dilute} regime, and parallel to the phase diagram of graph limits we establish a  phase diagram of \emph{repellent regimes} in which permuton limits of random superpermutations are given by the limit of the head permutation in a \emph{dense} regime, the limit of a single large component in the \emph{condensation} regime, and mixture of the two laws in a \emph{mixture} regime.

In Section \ref{sec:supp} we proceed to demonstrate the universality of Poisson-Dirichlet permutons and their iterations by constructing an example related to the well-known class $\cP$ of separable permutations. Separable permutations are known as the class of all permutations avoiding the patterns 2413 and 3142~\cite{MR1620935}. It was shown by~\cite{zbMATH06919021} that random separable permutations admit a random permuton $\mu_{\mathrm{Brownian}}$ called the Brownian (separable) permuton as their limit. It may be constructed from a  Brownian excursion~\cite{zbMATH07359153}. We define $d$-dimensional superpermutations with a split, by setting $\cP_1 = \cP$ and recursively constructing the class $\cP_d$ of superpermutations with head structures in $\cP$ and component structures from the class of ordered pairs $(P, P')$ of superpermutations from $\cP_{d-1}$. The ordered pair $(P,P')$ is identified with a superpermutation in a canonical way via concatenation. We show that the number of $n$-sized superpermutations in $\cP_d$ grows asymptotically like
\[
	 (\sqrt{2}-1)\frac{2^{3/2-d-5/2^{d+1}}}{\Gamma(1-2^{-d})}n^{-1-2^{-d}} (3- 2 \sqrt{2})^{-n}.
\]
We let $\sigma_{n,d}$  denote a uniformly selected permutation of size $n$ in $\cP_d$, and set $L_1 = \mathfrak{L}(\mu_{\mathrm{Brownian}})$ to the law of the Brownian (separable) permuton. With $L_d$ denoting the law of the random permuton limit of  $\sigma_{n,d}$ for $d \ge 2$, we show in Theorem~\ref{te:main4} that
\[
		\sigma_{n,d} \convd \mu^{(d)} := \mu_{\mathrm{PD}}(2^{-(d-1)}, -2^{-d}, L_1, L_{d-1}).
\]
Using Pitman's coagulation-fragmentation duality, we derive an alternative representation
\[
	\mu_{\mathrm{PD}}(2^{-(d-1)}, -2^{-d}, L_1, L_{d-1}) \eqdist \mu_{\mathrm{PD}}(1/2, -2^{-d},  L_{d-1}, L_{1}).
\]
We also consider random $2$-dimensional $n$-sized superpermutations $\sigma^{\langle q \rangle}_n$ biased by a component weight $q>0$, with head and component structures in the class of separable permutations. That is, $\sigma^{\langle q \rangle}_n$ assumes such a superpermutation with probability proportional to $q$ raised to the number of components. In the phase diagram of permuton limits we observe a ``spike'' for the critical value $q= \sqrt{2}-1$
	\[
\sigma^{\langle q \rangle}_n \convd \begin{cases}
	\mu_{\mathrm{Brownian}}, &q \neq \sqrt{2}-1 \\
	\mu_{\mathrm{PD}}(1/2, -1/4, L_1, L_{1}), &q = \sqrt{2}-1.
\end{cases}
\]

\subsection*{Notation}
We let $\ndN$ denote the positive integers and $\ndN_0$ the non-negative integers.  For each $n \in \ndN_0$ we set $[n] = \{1, \ldots, n\}$. All   unspecified limits are taken as $n \to \infty$. Convergence in distribution and probability are denoted by $\convd$ and $\convp$. We let $O_p(1)$ denote an unspecified random variable $Y_n$ that is stochastically bounded as $n \to \infty$, and $o_p(1)$ an unspecified random variable that tends in probability to zero as $n$ tends to infinity.  The $n$th coefficient of a power series $a(z)$ is denoted by $[z^n]a(z)$. All graphs considered in this work are simple, that is, there are no multi-edges or loops.

\section{Two-parameter processes and partition structures}
\label{sec:twoparamproc}

\subsection{The two-parameter Poisson-Dirichlet process}

Recall that given $a,b>0$, the \emph{$\mathrm{Beta}(a,b)$ distribution} on $]0,1[$ has probability density function
\[
\frac{x^{a-1}(1-x)^{b-1}}{B(a,b)},
\qquad 0<x<1.
\]
Here 
\[
	B(a,b) = \frac{\Gamma(a) \Gamma(b)}{\Gamma(a+b)} = \int_{0}^1 t^{a-1}(1-t)^{b-1}\,\,\mathrm{d}t
\]
denotes Euler's Beta function.

Given $0<\alpha<1$ and $\theta>-\alpha$ the \emph{two-parameter Poisson-Dirichlet process $\mathrm{PD}(\alpha, \theta)$} introduced by Pitman and Yor~\cite{MR1434129}  is the point process corresponding to a sequence of random variables
\[
	(V_1, V_2, \ldots) \quad \text{with} \quad V_1 > V_2 > \ldots \quad \text{and} \quad \sum_{\ell=1}^\infty V_\ell = 1,
\]
that may be constructed  via a stick-breaking construction by starting with a sequence of independent Beta random variables
\[
\widetilde{Y}_{\ell} \eqdist \mathrm{Beta}\bigl(1-\alpha,\;\theta + \ell\alpha\bigr),
\quad \ell=1,2,\ldots\,,
\]
setting
\[
\widetilde{V}_{1} = \widetilde{Y}_{1},
\qquad
\widetilde V_{\ell}
=
\widetilde{Y}_{\ell}\,\prod_{i=1}^{\ell-1}(1-\widetilde{Y}_{i}),
\quad \ell\ge2,
\]
and letting $(V_\ell)_{\ell \ge1}$ denote the ranked values of $(\widetilde{V}_\ell)_{\ell \ge 1}$.

\subsection{The two-parameter Chinese restaurant process}
\label{sec:crp}

The two-parameter Poisson-Dirichlet process $(V_\ell)_{\ell \ge 1}$ induces a family of growing partitions: If we ``paint'' the unit interval by cutting it into pieces of random lengths $V_1, V_2, \ldots$ and dropping $k \ge 1$ independent $\mathrm{Unif}([0,1])$ distributed points, we obtain an induced  partition of $[k]$.  

Conditionally  on $(V_\ell)_{\ell \ge 1}$, the probability for obtaining a partition whose classes have sizes $k_1, \ldots, k_r$ in the order of their least element is given by
\[
	\sum_{\substack{i_1, \ldots, i_r \\ \text{distinct}}} \prod_{j=1}^r \binom{k - 1 - k_1 - \ldots - k_{j-1}}{k_j-1} V_{i_j}^{k_{j}}.
\]
This is because we have $\binom{k-1}{k_1-1}$ choices for the class containing $1$, $\binom{k-1-k_1}{k_2-1}$ choices for the class containing the least element of the remaining points, and so on. The product of binomial coefficients arising from this ordering simplifies by a quick calculation as follows:
	\begin{align*}
		s(k_1, \ldots, k_r) &:= \prod_{j=1}^r \binom{k - 1 - k_1 - \ldots - k_{j-1}}{k_j-1}  \\ 
		&=  \frac{k!}{ (k_1 + \ldots + k_r )(k_2 + \ldots + k_r) \cdot \ldots \cdot k_r } \prod_{j=1}^r \frac{1}{(k_j-1)!}.
	\end{align*}
The unconditional probability 
\[
p(k_1,\dots,k_r) = s(k_1, \ldots, k_r)  \sum_{\substack{i_1, \ldots, i_r \\ \text{distinct}}} \Exb{\prod_{j=1}^r V_{i_j}^{k_j}}
\] for obtaining $(k_1, \ldots, k_r)$ was determined by Pitman~\cite[Prop. 9]{MR1337249} as the probability for a seating arrangement of a \emph{two-parameter Chinese restaurant process with an $(\alpha, \theta)$ seating plan}
\begin{equation}
	\label{eq:EPPF}
	p(k_1,\dots,k_r)
	= s(k_1, \ldots, k_r)
	\frac{\GPoch{\theta}{\alpha}{r}}
	{\Poch{\theta}{k}}
	\,\prod_{j=1}^{r} \Poch{1-\alpha}{\,k_j - 1}\,.
 \end{equation}
Here, for $x,d \in \ndR$ and integers $m \ge 0$ we set
\[
\Poch{x}{m}
:=
x\,(x+1)\,(x+2)\,\cdots\,(x+m-1),
\quad \Poch{x}{0}=1,
\]
and more generally
\[
\GPoch{x}{d}{m}
:=
\prod_{i=0}^{m-1}(x + i\,d),
\quad \GPoch{x}{d}{0}=1.
\]
The partition of $[k]$ is hence distributed like the seating plan of the following process, where customers $1,2, \ldots, k$ walk into a restaurant. Customer $1$ sits at table~$1$. If customers $1, \ldots, i$ have been seated at $m$ tables with $n_1, \ldots, n_m \ge 1$ occupants, then customer $i+1$ sits at table $1 \le j \le m$ with probability $\frac{n_j- \alpha}{\theta +i}$, and at a new table $m+1$ with probability $\frac{\theta + m\alpha}{\theta + i}$.

\subsection{Pitman's coagulation-fragmentation duality}

Suppose that $0<\alpha_1, \alpha_2 <1$ and $\theta > - \alpha_1 \alpha_2$. We may fragment a two-parameter Chinese restaurant process with an $(\alpha_1 \alpha_2, \theta)$ seating plan and $k$ customers by subdividing each table according to an independent Chinese restaurant process with an $(\alpha_1, - \alpha_1 \alpha_2)$ seating plan. The result  $\mathrm{FRAG}_k$ is a partition of a partition.

Conversely, we may start with a $\mathrm{CRP}(\alpha_1, \theta)$ process with $k$ customers and then consider the resulting tables as customers in a conditionally independent $\mathrm{CRP}(\alpha_2, \theta / \alpha_1)$ process, therefore creating a partition of partitions $\mathrm{COAG}_k$.

Pitman's coagulation-fragmentation duality~\cite{zbMATH01496111} states that for all $k \ge 1$
\begin{align}
	\label{eq:duality}
	\mathrm{FRAG}_k \eqdist \mathrm{COAG}_k.
\end{align}
It follows from Equation~\eqref{eq:EPPF}, which allows us to express the probability to obtain a partition of a partition with given sizes for both models, and compute that they coincide.
See also~\cite[Thm. 5.23]{MR2245368},~\cite[Prop. 22]{MR1434129}.

\section{Gibbs partitions in the dilute regime}

\label{sec:gibbs}

\subsection{Gibbs partitions}
A \emph{partition} $P$ of an $n$-element set $S$ is a finite collection of disjoint subsets whose union equals $S$. Formally, $P$ is a multiset, as it may contain the empty set multiple times. The elements of $P$ are its \emph{components}. We let $\mathrm{Part}(S)$ denote the countably infinite collection of all partitions of $S$.

Given two sequences $\bm v=(v_0,v_1,\dots)$ and $\bm w=(w_0,w_1,\dots)$ of non-negative real numbers, we assign to each partition $P \in \mathrm{Part}(S)$ its weight
\[
u(P)=|P|!\,v_{|P|}\,\prod_{Q\in P}\bigl(|Q|!\,w_{|Q|}\bigr).
\]
The associated \emph{partition function} 
\[
	u_n = \frac{1}{n!} \sum_{P \in \mathrm{Part}(S)}  u(P)
\]
only depends on the cardinality $n = |S|$. In order for $u_n$ to be finite we require $w_0<1$.

The associated generating series $U(z) = \sum_{n \ge 0} u_n z^n$, $V(z) = \sum_{n \ge 0} v_n z^n$ and $W(z) = \sum_{n \ge 0} w_n z^n$ satisfy the relation
\[
	U(z) = V(W(z))
\]
as formal power series. We let $\rho_v,\rho_w,\rho_u$ denote  the radii of convergence of $V,W,U$. 

Whenever $u_n>0$, define the random \emph{Gibbs partition} $P_n$ of $[n]$ by
\[
\mathbb{P}(P_n=P)
=\frac{u(P)}{n! u_n},
\quad P\in\mathrm{Part}([n]).
\]
Let $N_n=|P_n|$ be its number of components, and write those component sizes in a uniformly at random selected order $(K_1,\dots,K_{N_n})$. 

\subsection{The dilute regime}

The typical number and sizes of components in the Gibbs partition model depend on the weight-sequences $\bm{v}, \bm{w}$. Various regimes have been identified~\cite{MR2245368,MR4780503}. In the \emph{dilute regime}, we assume \emph{criticality}
\begin{align}
	\label{eq:crit}
	\rho_w >0 \qquad \text{and} \qquad \rho_v= W(\rho_w)>0,
\end{align}
and the following regularity conditions. We assume that there exist constants $0<\alpha<1$, $-\infty < \beta < 1$ and slowly varying functions $L_v, L_w$ such that
\begin{align}
	\label{eq:condw}
	\sum_{k > n} w_k \rho_w^k \sim L_w(n) n^{-\alpha} \qquad \text{and} \qquad w_n \rho_w^n = O( L_w(n) n^{-\alpha-1} )
\end{align}
and
\begin{align}
	\label{eq:condv}
	 v_n \rho_v^n \sim L_v(n) n^{-\beta-1}.
\end{align}
By Karamata's theorem, a sufficient condition for~\eqref{eq:condw} is  \[w_n \rho_w^n \sim  \alpha L_w(n) n^{-\alpha-1}.\] 
The following result shows that in the dilute regime the maximal components scale at the order $n$ with fluctuations described by a two-parameter Poisson-Dirichlet process. This theorem generalizes~\cite[Cor. 3.15]{MR4780503} and~\cite[Prop. 3.1]{zbMATH08054723}, which were stated under stronger assumptions. 

\begin{theorem}
	\label{te:gibbsdilutegene}
	Suppose that conditions~\eqref{eq:crit},~\eqref{eq:condw} and~\eqref{eq:condv} are met. Define the following point process $	\Upsilon_n = \sum_{\substack{1 \le i \le N_n \\ K_{i} > 0}} \delta_{K_{i} / n}$ on $]0,1]$, with  $\delta$ referring to the Dirac measure. Then 
	\[
		\Upsilon_n \convd \mathrm{PD}(\alpha, -\alpha \beta)
	\]
	as $n \to \infty$.
\end{theorem}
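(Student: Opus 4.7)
First I would exponentially tilt the weight sequence $\bm w$ by $\rho_w$ and reduce to the case $\rho_w = 1$: the map $w_k \mapsto w_k \rho_w^k$ leaves the probability measure on partitions of $[n]$ unchanged, and after this normalisation $p_k := w_k / \rho_v$ is a probability mass function on $\ndN_0$ with regularly varying right tail of index $-\alpha$ by~\eqref{eq:condw}. Let $X_1, X_2, \ldots$ be i.i.d.~with law $p$ and $S_r = X_1 + \cdots + X_r$. A direct calculation starting from $u(P) = |P|!\, v_{|P|} \prod_{Q \in P}(|Q|!\, w_{|Q|})$, together with the uniform random ordering of the blocks, shows that for every composition $(k_1, \ldots, k_r)$ of $n$ into non-negative parts,
\[
\Pr{N_n = r,\ (K_1, \ldots, K_r) = (k_1, \ldots, k_r)} = \frac{v_r \prod_{j=1}^r w_{k_j}}{u_n},
\]
whence $\Pr{N_n = r} = v_r \rho_v^r \Pr{S_r = n} / u_n$ and, conditionally on $N_n = r$, the tuple $(K_1, \ldots, K_r)$ is distributed as $(X_1, \ldots, X_r)$ given $S_r = n$. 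This reduces the theorem to a joint asymptotic analysis of $N_n$ together with the conditioned ranked jumps of a heavy-tailed random walk.

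Next I would invoke Doney's local large deviation asymptotics from~\cite{MR1440141} for $\Pr{S_r = n}$, uniformly in $r$ over a suitable range. Since $X$ is in the domain of attraction of an $\alpha$-stable law, there is a slowly-varying adjusted scale $b_n$ of order $n^\alpha$ on which $S_{b_n}/n$ converges to a positive $\alpha$-stable variable. Combined with~\eqref{eq:condv}, the dominant contribution to $u_n = \sum_r v_r \rho_v^r \Pr{S_r = n}$ comes from $r \asymp b_n$, and the limit law of $N_n / b_n$ has an explicit density on $(0,\infty)$ obtained by weighting the one-sided $\alpha$-stable density by the factor $r^{-\beta-1}$. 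Conditional on $N_n = r$ with $r \asymp b_n$, the point process $\sum_i \delta_{X_i/n}$ under the conditioning $S_r = n$ converges to $\mathrm{PD}(\alpha, 0)$; this is the classical identification of the ranked normalised jumps of an $\alpha$-stable subordinator bridged to a fixed endpoint with a Poisson-Dirichlet process, cf.~\cite[Ch.~4]{MR2245368}. Mixing over the limit of $N_n / b_n$ and matching the resulting exchangeable partition probability function against~\eqref{eq:EPPF} identifies the limit as $\mathrm{PD}(\alpha, -\alpha\beta)$: via the change of variables $r \leftrightarrow \sigma_1^{-\alpha}$ between the number of jumps and the total mass $\sigma_1$ of the subordinator, the $r^{-\beta-1}$ bias on the number of components transforms into a $\sigma_1^{\alpha\beta}$ bias, which is precisely the size-bias producing $\mathrm{PD}(\alpha, -\alpha\beta)$ from $\mathrm{PD}(\alpha, 0)$.

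The main obstacle is the uniform-in-$r$ local limit theorem for $\Pr{S_r = n}$ across the entire admissible range $\beta < 1$. For $\beta \in (0,1)$ the weighting $v_r \rho_v^r$ is summable and standard local limit estimates on the $b_n$ scale suffice. For $\beta \le 0$ it is not summable, atypically large values of $r$ contribute to $u_n$, and Doney's heavy-tail estimates must be combined with renewal-theoretic bounds to rule out escape of mass at both the ``too few'' and ``too many'' components ends. Once this uniform control is established, tightness of $\Upsilon_n$ in the point process topology on $(0,1]$ follows from $\sum_i K_i / n = 1$, and convergence of Laplace functionals against compactly supported continuous test functions on $(0,1]$ closes the argument.
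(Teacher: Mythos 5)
Your reduction (exponential tilting, the Kolchin-type identity $\Pr{N_n=r,\,(K_1,\dots,K_r)=(k_1,\dots,k_r)}=v_r\rho_v^r\prod_j w_{k_j}\rho_w^{k_j}/(u_n\rho_w^n)$) and your use of Doney's estimates for $u_n$ and for the limit law of $N_n/A(n)$ all match the paper, and your final tilting mechanism (the change of variables $r\leftrightarrow\sigma_1^{-\alpha}$ turning the $r^{-\beta-1}$ bias into a $\sigma_1^{\alpha\beta}$ bias) is indeed the correct way $\theta=-\alpha\beta$ enters. But the pivotal step is misstated: conditionally on $N_n=r$ with $r/A(n)\to y$, the ranked normalized components do \emph{not} converge to $\mathrm{PD}(\alpha,0)$. $\mathrm{PD}(\alpha,0)$ is the law of the ranked normalized jumps of the \emph{unconditioned} stable subordinator; the bridge conditioning $S_r=n$ corresponds to conditioning the subordinator on its terminal value (equivalently on $\sigma_1=y^{-1/\alpha}$), which gives the Poisson--Kingman conditional law $\mathrm{PD}(\alpha\mid s)$, and this law genuinely depends on $y$. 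As written the argument is internally inconsistent: if the conditional limit were $\mathrm{PD}(\alpha,0)$ for every $r$ in the relevant range, then mixing over the limit of $N_n/A(n)$ would again yield $\mathrm{PD}(\alpha,0)$ and $\beta$ could never appear; the $\theta$-dependence arises only because the $y$-dependent bridge laws are mixed against the tilted density $f_Z(y)\propto y^{-1/\alpha-\beta-1}f_\alpha(y^{-1/\alpha})$, whose change of variables $s=y^{-1/\alpha}$ produces the $s^{\alpha\beta}f_\alpha(s)$ tilt that characterizes $\mathrm{PD}(\alpha,-\alpha\beta)$.

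Moreover, even after correcting the statement, the conditional convergence of the ranked jumps of $(X_1,\dots,X_r)$ given $S_r=n$, uniformly over $r\asymp A(n)$, is not something you can dismiss as a classical identification citable from~\cite{MR2245368}; it is itself a local-limit statement whose proof amounts to essentially the computation the paper carries out. The paper sidesteps the conditioning-on-$N_n$ route entirely: it takes the components in size-biased order (by least element), computes the joint local asymptotics of $\Pr{K_{\langle 1\rangle}=k_1,\dots,K_{\langle m\rangle}=k_m}$ directly from Doney's theorem together with Gnedenko's local limit theorem, and identifies the limit, after the stick-breaking change of variables, as independent $\mathrm{Beta}(1-\alpha,\alpha(i-\beta))$ variables, i.e.\ the size-biased (GEM) representation of $\mathrm{PD}(\alpha,-\alpha\beta)$, before passing to ranked values and the point process. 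If you wish to keep your Poisson--Kingman mixture route, you must (i) replace $\mathrm{PD}(\alpha,0)$ by the conditional laws $\mathrm{PD}(\alpha\mid s)$, (ii) actually prove the conditional invariance principle uniformly in $r\asymp A(n)$ (this is where the Gnedenko and Doney inputs are unavoidable), and (iii) justify interchanging the conditioning with the limit when integrating over the law of $N_n/A(n)$, including the non-summable case $\beta\le 0$ that you correctly flag as delicate.
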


Before we prove Theorem~\ref{te:gibbsdilutegene} we recall relevant facts on local large deviation and renewal theorems. Let $X \ge 0$ denote a random non-negative integer with probability generating function
\[
\Exb{z^X} = W(\rho_w z) / W(\rho_w).
\]
We let $X_1, X_2, \ldots$ denote independent copies of $X$ and set $S_n = \sum_{i=1}^n X_i$.  Equation~\eqref{eq:condw} ensures that 
\begin{align}
	\label{eq:Xfun}
\Prb{X > n} \sim W(\rho_w)^{-1} L_w(n) n^{-\alpha}.
\end{align}
The function $x \mapsto  x^\alpha / ( W(\rho_w)^{-1} L_w(x) )$ is asymptotically equivalent to a smooth, strictly increasing function $A(x)$.

Indeed, by~\cite[Prop. 1.2.5]{Mikosch1999regular} it  is asymptotically equivalent to a smooth positive function $A^*(x)$, and by Karamata's theorem \cite[Thm. 1.2.6]{Mikosch1999regular} the smooth function $A(x) := \alpha^{-1} \int_{1}^x A^*(t)/t \,\mathrm{d}t$ is asymptotically equivalent to $A^*(x)$ and clearly strictly increasing.

We let $a(x)$ denote the inverse function of $A(x)$ and set $a_n := a(n)$. This way, $a(x)$ varies regularly at infinity with exponent $1/\alpha$ and
\[
	n \Prb{X> a_n} \to 1
\]
as $n \to \infty$, and hence
\[
	S_n / a_n \convd X_\alpha
\]
for an $\alpha$-stable random variable $X_\alpha \ge 0$ given by its Laplace transform
\[
	\Exb{e^{-t X_\alpha}} = \exp\left(- \Gamma(1-\alpha)t^\alpha\right), \qquad \Re(t) \ge 0.
\]
Its density function $f_\alpha$ satisfies for all $x>0$
\begin{align*}
\Gamma(1-\alpha)^{1/\alpha}f(x\Gamma(1-\alpha)^{1/\alpha}) =  \frac{1}{\pi x} \sum_{k=1}^\infty \frac{\Gamma(k \alpha +1)}{k!} (-x^{-\alpha})^k \sin(- \alpha k \pi).
\end{align*}
See~\cite[Sec. 7, Thm. 4.1]{janson2022stabledistributions} for details.  Furthermore, by~\cite[Thm. 5.1, Ex. 5.5]{janson2022stabledistributions} it holds for all $s \in \ndC$ with $\Re(s) < \alpha$ that
\begin{align}
	\label{eq:xalphadilute}
	\Ex{X_{\alpha}^s} = \Gamma(1-\alpha)^{s/\alpha} \frac{\Gamma(1- s/\alpha)}{\Gamma(1-s)}.
\end{align}
By Gnedenko's local limit theorem~\cite[Thm. 4.2.1]{MR0322926}, it follows that uniformly  for $\ell = y A(n)$ with $y$ restricted to a compact subset of $]0,\infty[$ we have
\begin{align}
	\label{eq:gnedenko}
	\Prb{S_n = \ell} &\sim \frac{1}{a_\ell} f_\alpha(n / a_\ell) \sim \frac{1}{y^{1/\alpha} n} f_\alpha(y^{-1/\alpha}).
\end{align}
The following result by Doney describes the asymptotics of generalized Green's functions.
\begin{proposition}[{\cite[Thm. 3]{MR1440141}}]
	\label{pro:doney}
	Suppose that~\eqref{eq:condw} holds and let $b(\cdot)$ be regularly varying at $\infty$ with exponent $\gamma > -2$. Then
	\[
		\sum_{\ell=0}^\infty b_\ell \Prb{S_\ell = n} \sim b(A(n)) A(n) n^{-1}  \alpha \Ex{X_{\alpha}^{-\alpha(\gamma+1)}} 
	\]
\end{proposition}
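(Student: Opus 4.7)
My plan is to prove the theorem via the method of \emph{joint factorial moments}. By the EPPF identity~\eqref{eq:EPPF}, the law of $\mathrm{PD}(\alpha,\theta)$ is uniquely determined by the quantities
\[
M_\infty(k_1,\ldots,k_r) := \mathbb{E}\left[\sum_{(i_1,\ldots,i_r)\text{ distinct}}\prod_{j=1}^r V_{i_j}^{k_j}\right] = \frac{\GPoch{\theta}{\alpha}{r}}{\Poch{\theta}{k}}\prod_{j=1}^r\Poch{1-\alpha}{k_j - 1}, \qquad k:=\sum_j k_j,
\]
for all $r\ge 1$ and $k_j\ge 1$; these are the $r$-th factorial moment measures of $\sum_i\delta_{V_i}$ tested against the monomials $\prod_j x_j^{k_j}$. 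Since $\Upsilon_n([\varepsilon,1])\le 1/\varepsilon$ deterministically, the family $(\Upsilon_n)$ is tight on $]0,1]$, and by Stone--Weierstrass such monomials are dense among continuous functions on $]0,1]^r$ vanishing on the hyperplanes $\{x_j=0\}$. Hence establishing $M_n(k_1,\ldots,k_r)\to M_\infty(k_1,\ldots,k_r)$ with $\theta=-\alpha\beta$ for all tuples will yield $\Upsilon_n\convd\mathrm{PD}(\alpha,-\alpha\beta)$.

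A generating-function pointing argument on $U(z)=V(W(z))$---mark $r$ distinct ordered blocks and replace $W(z)$ by $W^{[k_j]}(z):=\sum_\ell\ell^{k_j}w_\ell z^\ell$ in the $j$-th marked block---yields the exact identity
\[
\mathbb{E}\!\left[\sum_{(i_1,\ldots,i_r)\text{ distinct}}\prod_j K_{i_j}^{k_j}\right] = \frac{[z^n]\,V^{(r)}(W(z))\prod_{j=1}^r W^{[k_j]}(z)}{u_n}.
\]
After passing to the exponentially tilted law $\mathbb{P}(\tilde X=k)=w_k\rho_w^k/W(\rho_w)$, this ratio rewrites as
\[
\frac{\sum_m (m)_r\,v_m\rho_v^m\,\mathbb{E}\!\left[\prod_{j=1}^r\tilde X_j^{k_j}\,\mathbbm{1}\{S_m=n\}\right]}{\sum_m v_m\rho_v^m\,\mathbb{P}(S_m=n)},
\]
where both sums concentrate on the scale $m=yA(n)$ with $y$ in compact subsets of $]0,\infty[$. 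Proposition~\ref{pro:doney} applied to the denominator with $b_m=v_m\rho_v^m$ regularly varying of index $-\beta-1$ gives $u_n\sim\rho_w^{-n}\alpha\,\mathbb{E}[X_\alpha^{\alpha\beta}]\,L_v(A(n))A(n)^{-\beta}n^{-1}$.

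The technical heart is the uniform conditional factorial moment asymptotic: for $m=yA(n)$ with $y$ in compacts of $]0,\infty[$,
\[
(m)_r\,n^{-k}\,\mathbb{E}\!\left[\prod_{j=1}^r\tilde X_j^{k_j}\,\Big|\,S_m=n\right]\longrightarrow y^r\alpha^r\int_{\Delta_r}\prod_{j=1}^r x_j^{k_j-\alpha-1}\,\frac{f_\alpha\bigl((1-\sum_j x_j)/y^{1/\alpha}\bigr)}{f_\alpha(y^{-1/\alpha})}\,dx_1\cdots dx_r,
\]
on the open simplex $\Delta_r=\{x_j>0,\ \sum_j x_j<1\}$. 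This follows from iterated Bayes together with Gnedenko's local limit theorem~\eqref{eq:gnedenko} applied to $\mathbb{P}(S_{m-r}=n-\sum_j\tilde X_j)$ and the heavy-tail asymptotic~\eqref{eq:Xfun}. Substituting this limit into the numerator, replacing the $m$-sum by a Riemann integral in $y$ via~\eqref{eq:condv}, and observing that the factors $L_v(A(n))A(n)^{-\beta}n^{-1}$ cancel against the Doney denominator asymptotic, the remaining $y$-integral reduces under $u=y^{-1/\alpha}$ to a Mellin transform evaluated via~\eqref{eq:xalphadilute}, while the $x$-integral is a Dirichlet integral producing $\prod_j\Gamma(k_j-\alpha)\,\Gamma(\alpha(r-\beta))/\Gamma(k+\theta)$. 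A direct Gamma-function simplification---hinging on the identity $\alpha^r\theta\,\Poch{1-\beta}{r}/(\alpha r+\theta)=\GPoch{\theta}{\alpha}{r}$ at $\theta=-\alpha\beta$---collapses the resulting expression exactly to $M_\infty(k_1,\ldots,k_r)$. The main obstacle is supplying integrable dominators uniformly on $\Delta_r$, especially near the boundary $\sum x_j\to 1$ and near the coordinate hyperplanes $x_j\to 0$, to justify passing the sums to the integrals; here both hypotheses $0<\alpha<1$ and $\beta<1$ enter essentially.
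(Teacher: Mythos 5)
Your proposal does not address the statement you were asked to prove. Proposition~\ref{pro:doney} is a local large deviation / renewal-type asymptotic for the generalized Green's function $\sum_{\ell\ge 0} b_\ell \Prb{S_\ell = n}$, which the paper does not reprove but quotes from Doney~\cite{MR1440141}. What you have sketched instead is a factorial-moment proof of Theorem~\ref{te:gibbsdilutegene} (convergence of the rescaled component sizes of the Gibbs partition to $\mathrm{PD}(\alpha,-\alpha\beta)$). Worse, your argument explicitly invokes Proposition~\ref{pro:doney} itself --- you apply it to the denominator $u_n=\sum_m v_m\rho_v^m\,\Prb{S_m=n}$ to get its asymptotics --- so read as a proof of that proposition it is circular, and read as a proof of the Gibbs theorem it is answering a different question.

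A genuine proof of the stated proposition would have to analyse the sum over $\ell$ directly: split it into the ranges $\ell \ll A(n)$, $\ell \asymp A(n)$, and $\ell \gg A(n)$, show via Gnedenko's local limit theorem~\eqref{eq:gnedenko} that the middle range contributes $b(A(n))A(n)n^{-1}\int_0^\infty y^{\gamma-1/\alpha} f_\alpha(y^{-1/\alpha})\,\mathrm{d}y$, identify this integral (after $u=y^{-1/\alpha}$) with $\alpha\,\Ex{X_\alpha^{-\alpha(\gamma+1)}}$ via~\eqref{eq:xalphadilute}, and --- the hard part --- prove that the two tail ranges are negligible. The tail control is exactly where the hypotheses enter: one needs uniform local large-deviation bounds on $\Prb{S_\ell=n}$ for $\ell$ far below $A(n)$ (one-big-jump-type estimates using the second condition in~\eqref{eq:condw}) and sharp decay for $\ell$ far above $A(n)$, and the restriction $\gamma>-2$ is what makes the small-$\ell$ contribution summable. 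None of this appears in your proposal, and the Riemann-sum heuristic you do use for the $\ell\asymp A(n)$ scale cannot by itself yield the full asymptotic equivalence claimed in the proposition. If your intention was to prove Theorem~\ref{te:gibbsdilutegene} by moments rather than by the size-biased finite-dimensional computation the paper uses, that is a legitimate alternative route, but it still requires Doney's theorem (or an equivalent) as external input and so cannot serve as a proof of Proposition~\ref{pro:doney}.
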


By Kolchin's representation of Gibbs partitions~\cite[Thm. 1.2]{MR2245368}, component sizes are distributed like summands in a randomly stopped sum. That is,
\[
	\ (K_1, \ldots, K_{N_n})  \eqdist \left( (X_1, \ldots, X_N) \mid S_N= n\right)
\]
with $N \ge 0$ a random non-negative integer  independent from $(X_i)_{i \ge 1}$ that satisfies
\begin{align}
	\Pr{N = n}  = v_n \rho_v^n / V(\rho_v).
\end{align}
The partition function is up to an exponential tilting a special case of a generalized Green's function, hence by Proposition~\ref{pro:doney}, Equation~\eqref{eq:xalphadilute}, and Assumptions~\eqref{eq:crit},~\eqref{eq:condw}, and~\eqref{eq:condv}
\begin{align}
	\label{eq:partitionfucntion}
	u_n \rho_w^n &= \sum_{\ell \ge 0} v_\ell \rho_v^\ell  \Pr{S_\ell = n} \\
	&\sim L_v(A(n)) A(n)^{-\beta} n^{-1} \alpha \Ex{X_{\alpha}^{\alpha\beta}} \nonumber
\end{align}
and, by~\eqref{eq:gnedenko} and~\eqref{eq:condv} we have uniformly for $\ell = y A(n)$ with $y$ in a compact subset of $]0,\infty[$
\begin{align*}
	\Prb{N_n = \ell} &= \frac{v_\ell \rho_v^\ell}{ u_n \rho_w^n} \Pr{S_\ell = n} \\
	&\sim \frac{L_v(\ell)\ell^{-\beta-1}}{L_v(A(n)) A(n)^{-\beta} \alpha \Ex{X_{\alpha}^{\alpha\beta}}} y^{-1/\alpha}   f_\alpha(y^{-1/\alpha}) \\
	&\sim \frac{1}{A(n) \alpha \Ex{X_{\alpha}^{\alpha\beta}}} y^{-1/\alpha-\beta-1}   f_\alpha(y^{-1/\alpha}).
\end{align*}
This verifies
\begin{align}
	\label{eq:numcomb}
	N_n / A(n) \convd Z	
\end{align}
for a limiting random variable $Z>0$ with density 
\[
f_Z(y) = \frac{1}{ \alpha \Ex{X_{\alpha}^{\alpha\beta}}} y^{-1/\alpha-\beta-1}   f_\alpha(y^{-1/\alpha}).
\]
As a special case of~Proposition~\ref{pro:doney}, 
\[
	\Exb{(N_n / A(n))^r} \to \Exb{Z^r} 
\]
for all $r > \beta-1$. By~\eqref{eq:xalphadilute} and integration by substitution
\begin{align}
	\label{eq:momzr}
	\Ex{Z^r} = \frac{\Ex{(X_{\alpha})^{\alpha(\beta-r)}}}{\Ex{(X_{\alpha})^{\alpha\beta}}} = \Gamma(1-\alpha)^{-r} \frac{\Gamma(1-\beta+r) \Gamma(1- \alpha\beta)}{\Gamma(1 - \beta) \Gamma(1- \alpha(\beta-r))}.
\end{align}


\begin{proof}[Proof of Theorem~\ref{te:gibbsdilutegene}]	
	Let $K_{\langle 1 \rangle}, K_{\langle 2 \rangle}, \ldots$ denote the sizes of the components in the Gibbs partition model ordered according to their smallest element (with empty components placed at the end of this list). This is the size-biased ordering, that picks components with probability proportional to its size.

	Let $k$ denote an integer with $k/n$ restricted to fixed compact subset of $]0,1[$. Then, by Proposition~\ref{pro:doney}, and Assumptions~\eqref{eq:crit},~\eqref{eq:condw}, and~\eqref{eq:condv}, uniformly in $k$
	\begin{align*}
		\Prb{ K_{\langle 1 \rangle} = k } 
		&= \frac{k}{n} \Exb{\sum_{i=1}^{N} \one_{X_i = k}  \Big \vert S_{N} = n} \\
		&= \frac{k}{n} \frac{\Prb{X=k}}{\Prb{S_{N} = n}} \sum_{\ell \ge 1} \Prb{N = \ell }\ell  \Prb{S_{\ell-1} = n-k} \\
		&\sim \frac{k}{n} \Prb{X=k} \frac{L_v(A(n-k)) A(n-k)^{-\beta+1} (n-k)^{-1} \alpha \Ex{X_{\alpha}^{\alpha(\beta-1)}}}{L_v(A(n)) A(n)^{-\beta} n^{-1} \alpha \Ex{X_{\alpha}^{\alpha\beta}}} \\
		&\sim   A(n) \Prb{X=k}  \frac{k}{n}\left(1-\frac{k}{n}\right)^{(1- \beta)\alpha-1} \frac{ \Ex{X_{\alpha}^{\alpha(\beta-1)}}}{   \Ex{X_{\alpha}^{\alpha\beta}}}.
	\end{align*}
	By~\eqref{eq:xalphadilute} and standard properties of the Gamma function,
	\[
	\frac{ \Ex{X_{\alpha}^{\alpha(\beta-1)}}}{   \Ex{X_{\alpha}^{\alpha\beta}}} = \frac{1}{\alpha B(1-\alpha, \alpha(1-\beta))}.
	\]
	By~\eqref{eq:Xfun}, the measure $A(n) \Prb{n^{-1} X \in \cdot}$ converges vaguely towards a limiting measure on $]0, \infty[$ with density $\alpha x^{-\alpha -1}$, $x>0$. Consequently, $K_{\langle 1 \rangle} / n$ converges in distribution towards a limiting random variable with density
	\[
		\frac{1}{ B(1-\alpha, \alpha(1-\beta))}  x^{-\alpha} (1-x)^{(1-\beta)\alpha-1}, \qquad 0<x<1.
	\]
	In other words,
	\[
		K_{\langle 1 \rangle} /n \convd \mathrm{Beta}(1-\alpha, (1-\beta)\alpha).
	\]

	Let $k_1, \ldots, k_m$,  denote positive integers with $(k_i/n)_{1 \le i \le m}$ restricted to a fixed compact subset of the open simplex \[
	\Delta_{m} := \{(v_1, \ldots, v_m) \mid v_1, \ldots, v_m > 0, \sum_{i=1}^m v_i < 1 \}.
	\] Set $s = \sum_{j=1}^{m} k_j$. Then, by analogous arguments as for $m=1$,
	\begin{align*}
		&\Prb{ K_{\langle 1 \rangle} = k_1, \ldots,  K_{\langle m \rangle} = k_m} \\
		&=  \Exb{\sum_{i_1=1}^{N}  \frac{k_1 \one_{X_{i_1} = k_1}}{n} \sum_{i_2=1}^{N} \one_{i_1 \ne i_2} \frac{k_2 \one_{X_{i_2} = k_2}}{n - k_1} \cdots \Big \vert S_{N} = n} \\
		&= \frac{1}{\Prb{S_{N} = n}}\left( \prod_{i=1}^m \frac{k_i \Prb{X=k_i}}{n - \sum_{j=1}^{i-1} k_j} \right) \sum_{\ell \ge m} \Prb{N = \ell }\left( \prod_{i=0}^{m-1} (\ell -i) \right)  \Prb{S_{\ell-m} = n-s} \\
		&\sim \left( \prod_{i=1}^m \frac{k_i \Prb{X=k_i}}{n - \sum_{j=1}^{i-1} k_j} \right) \frac{L_v(A(n-s)) A(n-s)^{-\beta+m} (n-s)^{-1} \alpha \Ex{X_{\alpha}^{\alpha(\beta-m)}}}{L_v(A(n)) A(n)^{-\beta} n^{-1} \alpha \Ex{X_{\alpha}^{\alpha\beta}}} \\
		&\sim \left( \prod_{i=1}^m \frac{k_i \Prb{X=k_i}}{n - \sum_{j=1}^{i-1} k_j} \right) A(n)^m \left(1 - \frac{s}{n}\right)^{\alpha(m - \beta)-1}\frac{ \Ex{X_{\alpha}^{\alpha(\beta-m)}}}{   \Ex{X_{\alpha}^{\alpha\beta}}}.
	\end{align*}
	By~\eqref{eq:xalphadilute} and standard properties of the Gamma function,
	\[
	\frac{ \Ex{X_{\alpha}^{\alpha(\beta-m)}}}{   \Ex{X_{\alpha}^{\alpha\beta}}} = \alpha^{-m} \prod_{j=1}^m B(1-\alpha, \alpha(j-\beta))^{-1}.
	\]
	We know that the measure $A(n) \Prb{n^{-1} X \in \cdot}$ converges vaguely towards a limiting measure on $]0, \infty[$ with density $\alpha x^{-\alpha -1}$, hence $(K_{\langle i \rangle}/n)_{1 \le i \le m}$ converges in distribution towards a random vector 
	$
	(\hat{V}_i)_{1 \le i \le m} \in \Delta_{m}
	$
	with density function
	\[
		f(\bm{v}) = \left( \prod_{j=1}^m B(1-\alpha, \alpha(j-\beta))\right)^{-1} \left( \prod_{i=1}^m \frac{v_i^{-\alpha}}{1 - \sum_{j=1}^{i-1} v_j }  \right)  \left(1 - \sum_{j=1}^m v_j\right)^{\alpha(m-\beta)-1},
	\]
	$ \bm{v} = (v_i)_{1 \le i \le m} \in \Delta_{m}$.
	The bijection 
	\[
		\phi: \Delta_m \to ]0,1[^m, \quad \bm{v} = (v_i)_{1 \le i \le m} \mapsto (v_i / (1- v_1 - \ldots - v_{i-1})  )_{1\le i \le m}
	\]
	has inverse map
	\[
	 	\phi^{-1}: ]0,1[^m \to \Delta_m, \quad \bm{y} = (y_i)_{1 \le i \le m} \mapsto (y_i (1- y_{i-1}) \cdots (1-y_1)  )_{1\le i \le m}.
	\]
	It is easy to see that if $\bm{v}$ corresponds to $\bm{y}$ under this bijection, then
	\[
		\sum_{j=1}^{i-1} v_j = 1 - \prod_{j=1}^{i-1}(1 - y_j)
	\]
	for all $1 \le i \le m$.
	Here we use the common convention that the sum over an empty index set equals zero, and the product over an empty index set equals one.
	The Jacobian  of $\phi^{-1}$ is lower-triangular, hence its determinant is the product of the diagonal entries:
	\[
		\det D \phi^{-1} (\bm{y}) = \prod_{i=1}^m \prod_{j=1}^{i-1} (1-y_j).
	\]
	Setting $\hat{Y}_i = \phi(\hat{V}_1, \ldots, \hat{V}_m)$, it follows that $(\hat{Y}_i)_{1 \le i \le m} \in ]0,1[^m$ has density
	\begin{align*}
		&f(\phi^{-1}(\bm{y})) |\det D \phi^{-1} (\bm{y})| \\
		&= \left( \prod_{j=1}^m B(1-\alpha, \alpha(j-\beta))\right)^{-1} \left(\prod_{j=1}^m y_j(1 - y_{j-1})\cdots(1- y_1)\right)^{-\alpha} \left(\prod_{j=1}^m(1 - y_j)\right)^{\alpha(m-\beta)-1} \\
		&= \left( \prod_{j=1}^m B(1-\alpha, \alpha(j-\beta))\right)^{-1} \prod_{j=1}^m y_j^{-\alpha} (1-y_j)^{\alpha(j-\beta)-1}.
	\end{align*}
	Thus, $\hat{Y}_1,\ldots, \hat{Y}_m$ are independent, with $\hat{Y}_i$ following a $\mathrm{Beta}(1-\alpha, \alpha(i-\beta))$ distribution for all $1\le i \le m$. Consequently, $(\hat{V}_i)_{1 \le i \le m} \eqdist (\tilde{V}_i)_{1 \le i \le m}$ for the size-biased ordering $(\tilde{V}_i)_{i \ge 1}$ of the points of the  Poisson-Dirichlet process with the same parameter $\alpha$ and $\theta = -\alpha \beta$.
	
	Setting $K_{\langle i \rangle} = 0$ for $i >N_n$, we have $\sum_{i=1}^\infty K_{\langle i \rangle} /n = 1$ and $\sum_{i=1}^\infty \tilde{V}_i = 1$. Together with distributional convergence of finite dimensional marginals, it follows that
	\[
		( K_{\langle i \rangle} /n)_{i \ge 1} \convd (\tilde{V}_i)_{i \ge 1}.
	\]
	By the continuous mapping theorem, it follows that
		\[
	( K_{( i )} /n)_{i \ge 1} \convd (V_i)_{i \ge 1},
	\]
	and hence
		\[
	\Upsilon_n \convd \mathrm{PD}(\alpha, -\alpha \beta).
	\]
\end{proof}

\section{Background on graph limits}
\label{sec:graphons}

\subsection{Encoding graphs by homomorphism densities}

We recall background on graph limits following closely the exposition of~\cite{MR2463439}. All graphs considered in this paper are simple, that is, they have no loops or multi-edges. Given a finite graph $G$ and a sequence $v_1,\ldots,v_k$ of vertices of $G$, let $G(v_1,\ldots,v_k)$ be the graph with vertex set $[k]$ in which $i,j\in[k]$ are adjacent if and only if  $v_i$ and $v_j$ are adjacent in $G$. We allow the case $v_i=v_j$ for $i\ne j$; then $i$ and $j$ are not adjacent in $G(v_1,\ldots,v_k)$ since $G$ is simple.

Let $G[k]:=G(v_1',\ldots,v_k')$ with $v_1',\ldots,v_k'$ a sequence of independently and uniformly at random selected vertices of $G$. 
For any graph $H$ with vertex set $[k]$, the \emph{homomorphism density} is defined by
\begin{align}
	\label{eq:tind}
	t(H,G)=\Pr{H\subset G[k]}.
\end{align}
Here $H\subset G[k]$ means that every edge of $H$ is an edge of $G[k]$. Since both graphs share the same set $[k]$ of vertices this is well-defined.

The unlabelled graph underlying a labelled one is called its \emph{isomorphism type}. We let $\cU$ denote the countable set of all finite unlabelled graphs. If $U\in\cU$ is any graph with $k$ vertices, we choose an isomorphic graph $H$ with vertex set $[k]$ and set $t(U,G):=t(H,G)$. By symmetry this does not depend on the choice of $H$.

Define the map
\begin{align}
	\tau:\cU\to[0,1]^\cU,\qquad G\mapsto (t(U,G))_{U\in\cU}.
\end{align}
The map $\tau$ is not injective; for instance, the complete bipartite graph on $2n$ vertices  has the same image for all $n\ge1$. However, the  restriction of $\tau$ to unlabelled graphs with exactly $n$ vertices is injective for each $n\ge1$, see~\cite[Thm. 3.6]{MR214529}.

Equip $[0,1]^\cU$ with the product topology. Let $\cU^*:=\tau(\cU)$ and let $\overline{\cU^*}\subset[0,1]^\cU$ be its closure. Since $[0,1]^\cU$ is a countable product of compact Polish spaces, it is again  compact Polish; hence the closed subspace $\overline{\cU^*}$ is compact Polish.

For $\Gamma=(t_U)_{U\in\cU}\in[0,1]^\cU$ and $U\in\cU$ finite, set
\begin{align}
	t(U,\Gamma)=t_U.
\end{align}

\subsection{Encoding graphons by homomorphism densities}
\label{sec:encoding}

We follow the accounts in~\cite{MR3012035,MR2463439}. A \emph{graphon} (from “graph-function”) is a symmetric measurable function
\begin{align}
	W:[0,1]^2\to[0,1].
\end{align}
Let $\cW_{\mathrm{S}}$ be the set of graphons.

Given $W$, construct the random graph $G(\infty,W)$ on vertex set $\ndN$ by first sampling $X_1,X_2,\ldots\stackrel{\text{i.i.d.}}{\sim}\mathrm{Unif}[0,1]$, and then, for $i<j$, adding the edge $ij$ independently with probability $W(X_i,X_j)$. For $k\ge1$, let $G(k,W)$ be the induced subgraph on $[k]$.

If $H$ has vertex set $[k]$, define
\begin{align}
	t(H,W):=\Pr{H\subset G(k,W)}=\int_{[0,1]^k}\prod_{ij\in E(H)} W(x_i,x_j),\mathrm{d}x_1\ldots \mathrm{d}x_k.
\end{align}
Here $E(H)$ is the edge set of $H$, and $H\subset G(k,W)$ means that every edge of $H$ appears in $G(k,W)$. For $U\in\cU$ finite, pick any labelled copy $H$ with vertex set $[k]$ and set $t(U,W):=t(H,W)$; this is independent of the choice of $H$.

Define
\begin{align}
	\label{eq:premap}
	\cW_{\mathrm{S}}\to\overline{\cU^*},\qquad W\mapsto \Gamma_W:=(t(U,W))_{U\in\cU}.
\end{align}
One way to see this is well-defined is to note that the random graphs $(G(n,W))_{n\ge1}$ satisfy a.s.\ $\tau(G(n,W))\to\Gamma_W$ in $[0,1]^\cU$, hence $\Gamma_W\in\overline{\cU^*}$; see e.g.~\cite[Eq.~(6.1), Cor.~5.4, Thm.~5.5]{MR2463439}.

\begin{proposition}[{\cite[Thm. 7.1]{MR2463439}}]
	\label{pro:equiv}
	Let $W,W'\in\cW_{\mathrm{S}}$. The following are equivalent.
	\begin{enumerate}
		\item $\Gamma_W=\Gamma_{W'}$.
		\item $t(U,W)=t(U,W')$ for all $U\in\cU$.
		\item $G(\infty,W)\eqdist G(\infty,W')$ as random graphs on $\ndN$.
		\item $G(n,W)\eqdist G(n,W')$ as random graphs on $[n]$ for all $n\ge1$.
		\item There exist Lebesgue measure-preserving maps $\varphi,\varphi':[0,1]\to[0,1]$ such that $W(\varphi(x),\varphi(y))=W'(\varphi'(x),\varphi'(y))$ almost everywhere. That is, $W$ and $W'$ are \emph{weakly isomorphic}.
		\item The cut distance
		\[
		\delta_{\square}(W,W')=\inf_{\varphi,\varphi'}\ \sup_{A,B}\left|\int_{A\times B}\bigl(W(\varphi(x),\varphi(y))-W'(\varphi'(x),\varphi'(y))\bigr)\,\mathrm{d}x\,\mathrm{d}y\right|
		\]
		vanishes, where $\varphi,\varphi'$ range over all Lebesgue measure-preserving maps $[0,1]\to[0,1]$, and $A,B$ over all Lebesgue-measurable subsets of $[0,1]$.
	\end{enumerate}
\end{proposition}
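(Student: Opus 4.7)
The plan is to prove the equivalences in three stages: first dispatch the ``bookkeeping'' equivalences (1) $\Leftrightarrow$ (2) $\Leftrightarrow$ (3) $\Leftrightarrow$ (4); then connect the measure-theoretic conditions via the easy chain (5) $\Rightarrow$ (6) $\Rightarrow$ (2); and finally address the one substantive direction (2) $\Rightarrow$ (5). The deep content lies entirely in the last step.

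For the bookkeeping equivalences: (1) $\Leftrightarrow$ (2) is literally the definition $\Gamma_W = (t(U,W))_{U \in \cU}$. For (2) $\Leftrightarrow$ (4), I would use inclusion-exclusion to express $\Prb{G(k,W) = F}$, for each labelled graph $F$ on $[k]$, as an alternating sum of the densities $t(H, W)$ over all $H$ on $[k]$ containing $F$ as a subgraph; the converse direction is the very definition $t(H,W) = \Prb{H \subset G(k,W)}$. Equivalence (3) $\Leftrightarrow$ (4) then follows from Kolmogorov's extension theorem, since $G(n,W)$ is the restriction of $G(\infty,W)$ to $[n]$, and the family $(G(n,W))_{n \ge 1}$ is a consistent family of finite marginals.

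For (5) $\Rightarrow$ (2), I would apply the change of variables formula in the integral defining $t(H,W)$: measure-preservation of $\varphi$ yields $t(H,W) = \int \prod_{ij \in E(H)} W(\varphi(x_i), \varphi(x_j))\, \mathrm{d}x_1 \cdots \mathrm{d}x_k$, which by hypothesis equals the analogous integral with $W'$ and $\varphi'$, i.e.\ $t(H, W')$. Then (5) $\Rightarrow$ (6) is immediate, as the hypothesized measure-preserving maps already make the integrand in the definition of $\delta_\square(W, W')$ vanish identically. For (6) $\Rightarrow$ (2) I would cite the \emph{counting lemma} (see e.g.\ \cite[Lem.~8.22]{MR3012035}), which asserts that for any fixed $H$ with $e(H)$ edges, $|t(H,W) - t(H,W')| \le e(H) \cdot \delta_\square(W, W')$; hence a vanishing cut distance forces the coincidence of all homomorphism densities.

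The main obstacle is the direction (2) $\Rightarrow$ (5), the uniqueness of the graphon up to weak isomorphism. My preferred route is to use the already established (2) $\Rightarrow$ (3) to reduce to the statement that two graphons which induce the same law on $G(\infty, \cdot)$ are weakly isomorphic. The random graph $G(\infty,W)$ is a vertex-exchangeable random graph on $\ndN$, and by the Aldous--Hoover representation theorem its distribution determines its directing graphon uniquely up to rearrangement of $[0,1]$ by measure-preserving maps; matching the two representations yields the maps $\varphi, \varphi'$ required by (5). An alternative is to prove (2) $\Rightarrow$ (6) through the compactness of the graphon space modulo weak isomorphism under $\delta_\square$ (the Lovász--Szegedy compactness theorem) together with the almost-sure convergence $\delta_\square(G(n,W), W) \to 0$ coming from the regularity lemma and Azuma-type concentration. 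Either route relies on substantial external machinery, and this is the step whose careful execution carries essentially all of the work; the remaining implications, as sketched, follow by short definitional or change-of-variables arguments.
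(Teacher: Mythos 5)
The paper does not prove this proposition at all: it is imported verbatim from \cite{MR2463439} (Thm.~7.1), so there is no internal argument to compare against, and your sketch has to be judged as a proof of the cited theorem itself. The routine parts of your outline are correct: (1)$\Leftrightarrow$(2) is definitional, (2)$\Leftrightarrow$(4) follows by inclusion--exclusion over supergraphs on the fixed vertex set $[k]$, (3)$\Leftrightarrow$(4) by consistency of the marginals and Kolmogorov extension, (5)$\Rightarrow$(2) by the push-forward/change-of-variables argument, (5)$\Rightarrow$(6) trivially, and (6)$\Rightarrow$(2) by the counting lemma for the cut distance.

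The gap sits exactly in the step you flag as carrying all the weight. The uniqueness clause of the Aldous--Hoover theorem, in its standard form, does \emph{not} say that $G(\infty,W)\eqdist G(\infty,W')$ forces $W(\varphi(x),\varphi(y))=W'(\varphi'(x),\varphi'(y))$ a.e.\ for measure-preserving $\varphi,\varphi':[0,1]\to[0,1]$; it gives only a weaker equivalence of representing functions (allowing auxiliary randomization/couplings). Upgrading that to the ``pure'' two-map form of item (5) is precisely the nontrivial content of the theorem being quoted, established by Borgs--Chayes--Lov\'asz (uniqueness of graph limits via moments) and in Janson's work; so writing ``by Aldous--Hoover the directing graphon is unique up to measure-preserving rearrangement'' either quietly assumes the statement to be proved or must be replaced by an explicit appeal to that uniqueness theorem. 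Note also that your proposed alternative route --- proving (2)$\Rightarrow$(6) via compactness of the graphon space and a.s.\ convergence of samples --- does not close the equivalence cycle: together with (5)$\Rightarrow$(6)$\Rightarrow$(2) it yields no implication \emph{into} (5), so on that route you would still owe (6)$\Rightarrow$(5), which is the same hard uniqueness statement in disguise.
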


Let $\widehat{\cW}_{\mathrm{S}}$ be the quotient metric space induced by the pre-metric $\delta_{\square}$. The map in~\eqref{eq:premap} induces a homeomorphism
\begin{align}
	\label{eq:homeo}
	\widehat{\cW}_{\mathrm{S}}\cong \overline{\cU^*}.
\end{align}
See~\cite[Rem. 6.1]{MR2463439}.

Let $G$ have vertices $v_1,\ldots,v_n$. Interpret $G$ as a graphon $W_G$ by setting $W_G=1$ on squares of the form $](a-1)/n,a/n[\ \times\ ](b-1)/n,b/n[$, $1\le a,b\le n$, whenever $v_a$ and $v_b$ are adjacent, and $W_G=0$ elsewhere on $[0,1]^2$. By item~(5) of Proposition~\ref{pro:equiv}, the ordering $v_1,\ldots,v_n$ is immaterial up to weak isomorphism. Moreover, this interpretation is compatible with homomorphism densities:
\begin{align}
	\label{eq:igual}
	t(U,G)=t(U,W_G)
\end{align}
for all $U\in\cU$.

After passing to the image in $\widehat{\cW}_{\mathrm{S}}$, the order $v_1,\ldots,v_n$  of vertices ceases to matter. Thus each unlabelled finite graph $U\in\cU$ has a canonical image $\widehat{W}_U\in\widehat{\cW}_{\mathrm{S}}$. Using~\eqref{eq:igual} and Proposition~\ref{pro:equiv}, $\tau(U)\in\overline{\cU^*}$ and $\widehat{W}_U\in\widehat{\cW}_{\mathrm{S}}$ correspond under the homeomorphism~\eqref{eq:homeo}.

\subsection{Graph limits}

A finite unlabelled graph may be viewed as an element of $\overline{\cU^*}$, or equivalently of $\widehat{\cW}_{\mathrm{S}}$. Hence a random finite graph is a random element of these spaces. Since $\widehat{\cW}_{\mathrm{S}}\cong\overline{\cU^*}$ are Polish, standard notions of convergence in distribution apply. The next criterion characterizes convergence in law.

\begin{lemma}[{\cite[Thm. 3.1]{MR2463439}}]
	\label{le:mega}
	For any sequence $(G_n)_{n\ge1}$ of random finite graphs whose number of vertices tends in probability to infinity, the following are equivalent.
	\begin{enumerate}
		\item The random graph $G_n$ (viewed as $\tau(G_n)\in\overline{\cU^*}$) converges in distribution to some random element $\Gamma\in\overline{\cU^*}$.
		\item For any family $(H_i)_{1\le i\le \ell}$ of finite graphs, the vector $(t(H_i,G_n))_{1\le i\le \ell}$ converges in distribution.
		\item For any finite $H\in\cU$, the density $t(H,G_n)$ converges in distribution.
		\item For any finite $H\in\cU$, the expectation $\Ex{t(H,G_n)}$ converges.
	\end{enumerate}
	The limits in 2., 3., and 4.\ are necessarily $(t(H_i,\Gamma))_{1\le i\le \ell}$, $t(H,\Gamma)$, and $\Ex{t(H,\Gamma)}$.
\end{lemma}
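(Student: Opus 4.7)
The plan is to establish the cycle of implications $(1) \Rightarrow (2) \Rightarrow (3) \Rightarrow (4) \Rightarrow (1)$. The first three implications are routine. For $(1) \Rightarrow (2)$, fix a finite tuple of graphs $(H_i)_{1\le i\le\ell}$; the evaluation map $\Gamma=(t_U)_{U\in\cU}\mapsto (t_{H_i})_{1\le i\le\ell}$ is a finite product of coordinate projections on $[0,1]^\cU$, and is therefore continuous in the product topology. Applying the continuous mapping theorem to $\tau(G_n)\convd\Gamma$ yields the joint convergence asserted in (2). The implication $(2) \Rightarrow (3)$ is the special case $\ell=1$, and $(3) \Rightarrow (4)$ follows because $t(H,G_n)\in[0,1]$ is bounded, so convergence in distribution entails convergence of expectations.

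The substantive step is $(4) \Rightarrow (1)$. The key input I would use is the disjoint-union multiplicativity of densities: for any graphs $H_1,\ldots,H_\ell$ and positive integers $k_1,\ldots,k_\ell$,
\[
\prod_{i=1}^\ell t(H_i, G)^{k_i} \;=\; t\!\left(\bigsqcup_{i=1}^\ell H_i^{\sqcup k_i},\; G\right),
\]
where $H^{\sqcup k}$ denotes $k$ vertex-disjoint copies of $H$. This identity holds because sampling the required vertices of $G$ independently with replacement makes the containment events on disjoint vertex blocks independent, and the disjoint union imposes no edges between the blocks. Applying (4) to the disjoint-union graphs then yields convergence of all mixed moments $\Ex{\prod_i t(H_i,G_n)^{k_i}}$. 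Since $(t(H_i,G_n))_{1 \le i \le \ell}$ takes values in the compact box $[0,1]^\ell$, the method of moments for bounded random vectors upgrades this moment convergence to weak convergence of the joint distribution, for every finite family of graphs.

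To conclude, I would invoke the compactness of $\overline{\cU^*}$: being a closed subset of the compact Polish space $[0,1]^\cU$, it is itself compact, so the laws of $\tau(G_n)$ are automatically tight, and Prokhorov's theorem yields relative compactness in the weak topology. Any subsequential weak limit $\Gamma$ of $\tau(G_n)$ inherits the convergent finite-dimensional marginals established above, and since a Borel probability measure on a Polish product space is determined by its finite-dimensional marginals, all subsequential limits have the same distribution. Hence $\tau(G_n)$ converges in distribution to a well-defined limit $\Gamma$, and the identifications in (2), (3), (4) as $(t(H_i,\Gamma))_{1\le i\le\ell}$, $t(H,\Gamma)$, and $\Ex{t(H,\Gamma)}$ follow from the already-established $(1) \Rightarrow (2), (3), (4)$.

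The main obstacle I expect is formulating the disjoint-union identity cleanly under the paper's convention of subgraph containment in $G[k]$ (sampling with replacement) rather than via injective homomorphism counts, and invoking a sufficiently general form of the multivariate method of moments. The hypothesis $|V(G_n)|\convp\infty$ does not appear strictly necessary for the formal equivalence of (1)--(4), but presumably rules out degenerate subsequential limits concentrated at points of $\cU^*$ corresponding to small finite graphs.
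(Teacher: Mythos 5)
Your proof is correct. Note first that the paper does not prove this lemma at all: it is imported verbatim as Theorem~3.1 of the cited reference (Diaconis--Janson), so there is no internal argument to compare against. Your reconstruction is essentially the proof from that source: the easy implications via continuous projections and bounded convergence, and for $(4)\Rightarrow(1)$ the exact multiplicativity of $t$ over disjoint unions (which is exact here because the paper's definition $t(H,G)=\Pr{H\subset G[k]}$ samples vertices with replacement, i.e.\ $t$ is a homomorphism density), followed by the method of moments on $[0,1]^\ell$ and compactness of $\overline{\cU^*}$. This is also precisely the mechanism behind the uniqueness statement the paper quotes as~\eqref{eq:unique}. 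Two small points worth making explicit: a subsequential limit $\Gamma$ of $\tau(G_n)$ lies a.s.\ in $\overline{\cU^*}$ by the portmanteau theorem applied to the closed set $\overline{\cU^*}$ (your phrasing via tightness on the compact space covers this, but it deserves a sentence), and the identification of the limits in (2)--(4) is, as you say, immediate from $(1)\Rightarrow(2)$--$(4)$ once convergence is established. Your closing remark is also essentially right: with the homomorphism-density convention the formal equivalence of (1)--(4) does not use $|V(G_n)|\convp\infty$; in the cited reference that hypothesis serves to pass between homomorphism, injective and induced densities (and to interpret sampled induced subgraphs), a step your route bypasses entirely.
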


By the discussion above, an entirely analogous statement holds if we view $G_n$ as $\widehat{W}_{G_n}\in\widehat{\cW}_{\mathrm{S}}$ instead. In particular, any of the four conditions in Lemma~\ref{le:mega} is equivalent to $\widehat{W}_{G_n}\convd W$ for some random element $W\in\widehat{\cW}_{\mathrm{S}}$, obtained by applying the homeomorphism $\overline{\cU^*}\cong\widehat{\cW}_{\mathrm{S}}$ in~\eqref{eq:homeo} to $\Gamma$.

By \cite[Cor. 3.2]{MR2463439}, if random elements $\Gamma,\Gamma'\in\overline{\cU^*}$ satisfy $\Ex{t(H,\Gamma)}=\Ex{t(H,\Gamma')}$ for all $H\in\cU$, then
\begin{align}
	\label{eq:unique}
	\Gamma\eqdist\Gamma'.
\end{align}
Thus, for $(\mG_n)_{n \ge 1}$ as in Lemma~\ref{le:mega}, the family $(t_H)_{H\in\cU}:=(\lim_{n\to\infty}\Ex{t(H,\mG_n)})_{H\in\cU}$ uniquely determines the law of the limit.

\section{Convergence of random supergraphs}
\label{sec:supergraphs}

\subsection{Supergraphs}

A class $\cC$ of labelled graphs is a collection of  graphs that is closed under relabelling of vertices. That is, if a graph belongs to the class $\cC$ then so does any isomorphic copy of it. For any non-negative integer $n \ge 0$ we let $\cC_n \subset \cC$ denote the subset of all graphs with vertex set $[n]$. Suppose that
\[
	\omega_{\cC}: \cC \to \ndR_{\ge 0}
\]
is a function satisfying $\omega_{\cC}(C) = \omega_{\cC}(C')$ if $C, C' \in \cC$ are isomorphic. We say $\omega_{\cC}$ assigns to each graph of $\cC$ its \emph{weight}. We let 
\[
	|\cC_n|_\omega = \sum_{C \in \cC_n} \omega_{\cC}(C)
\]
denote the \emph{inventory} of size $n$.
 The exponential generating series of the class $\cC$ with weight function $\omega_{\cC}$ is defined as the  power series
\[
	\cC(z) = \sum_{n \ge 1} \frac{|\cC_n|_{\omega_{\cC}}}{n!} z^n.
\]
We let $\rho_\cC$ denote its radius of convergence.

Given classes $\cH$ and $\cC$ of labelled graphs we may form the class \[
\cG= \cH \circ \cC 
\]
of all corresponding (\emph{two-dimensional}) supergraphs obtained by taking a graph $H$ from the class $\cH$ and blowing up each vertex $v$ of $H$ by a graph $C_v$ from the class $\cC$. Vertices from distinct components $C_v$ and $C_w$ for $v \ne w$ are adjacent if and only if the vertices $v$ and $w$ are adjacent in $H$. We keep the information on the ``head'' structure $H$, the underlying partition of the final vertex set and the bijective correspondence of these classes to the vertices of $H$ as a rooting in order to ensure that the decomposition into components is always unique.

With $\cH$ and $\cC$ equipped with weight functions $\omega_{\cH}$, $\omega_\cC$, the $\omega_\cG$ weight of a supergraph from $\cG$ is defined  as the product of the weight of its head-structure and the weights of its components.

We let $\mG_n$ denote the  random graph from the set $\cG_n$ drawn with probability proportional to its weight. Likewise, we define  random graphs $\mH_n$ from the set $\cH_n$ and $\mC_n$ from the set $\cC_n$, each with probability proportional to their weights. Of course, we only consider integers $n$ for which the inventories are positive. Throughout, we set
\[
	c_n = \frac{|\cC_n|_{\omega_{\cC}}}{n!} \qquad \text{and} \qquad h_n = \frac{|\cH_n|_{\omega_{\cH}}}{n!}.
\]
Furthermore, we set $\cC(z) = \sum_{n \ge 1} c_n z^n$ and $\cH(z) = \sum_{n \ge 1} h_n z^n$.

We emphasize that the special case of uniform random graphs  corresponds to setting all weights equal to $1$.

The classes $\cH$ and $\cC$ may also be allowed to be classes of supergraphs, therefore giving rise to supergraphs with a higher \emph{dimension}.

\subsection{Poisson-Dirichlet graphons in a dilute regime}

The head structure and component structure of $\mG_n$ may interact with each other in different ways. We establish a regime where both the asymptotic shapes of $\mC_n$ and $\mH_n$ influence the limiting shape of $\mG_n$, yielding a novel limiting objects governed by a two-parameter Poisson-Dirichlet process.

In this subsection, we assume
\begin{align}
	\label{eq:graphcrit}
	\rho_\cC >0 \qquad \text{and} \qquad \rho_\cH= \cC(\rho_\cC)>0,
\end{align}
as well as the following regularity conditions. We assume that there exist constants 
\begin{align}
	\label{eq:abet}
	0<\alpha<1, \qquad -\infty < \beta < 1
\end{align} and slowly varying functions $\ell_\cH, \ell_\cC$ such that
\begin{align}
	\label{eq:graphcondw}
	\sum_{k > n} c_k  \rho_\cC^k \sim \ell_\cC(n) n^{-\alpha} \qquad \text{and} \qquad c_n \rho_\cC^n = O( \ell_\cC(n) n^{-\alpha-1} )
\end{align}
and
\begin{align}
	\label{eq:graphcondv}
	h_n \rho_\cH^n \sim \ell_\cH(n) n^{-\beta-1}.
\end{align}
Suppose that there exist random graphons $W_\cH$ and $W_\cC$ such that  
\begin{align}
	\label{eq:compconv}
	\mH_n \convd W_{\cH} \qquad \text{and} \qquad \mC_n \convd W_{\cC}
\end{align}
as graphons. See Lemma~\ref{le:mega} for this notion of convergence.

We define a novel random graphon $W_{\cG}$ whose distribution depends on $\alpha$, $\beta$ (subject to~\eqref{eq:abet}), and the laws $L_{\cH} = \mathfrak{L}(W_{\cH})$ and $L_{\cC} = \mathfrak{L}(W_{\cC})$. The definition involves the two-parameter Poisson-Dirichlet process with parameters $\alpha$ and
\begin{align}
	\theta := - \alpha \beta.
\end{align}
In order to stay consistent with parametrization of the Poisson-Dirichlet process, we use $\theta$ as parameter for the random graphon $W_{\cG}$ instead of $\beta$.
\begin{definition}[Poisson-Dirichlet graphon]
	\label{def:wg}
	We let \[
	W_{\cG} =  W_{\mathrm{PD}}(\alpha, \theta, L_{\cH}, L_{\cC})
	\]
	 denote the random graphon defined as follows.
	\begin{enumerate}
		\item Generate random points $(V_\ell)_{\ell \ge 1}$ according to $\mathrm{PD}(\alpha, \theta)$. Note that $V_\ell >0$ for all $\ell \ge 1$ and  $\sum_{\ell \ge 1} V_\ell = 1$. For all $k \ge 0$ set $s_k = \sum_{i=1}^{k} V_i$.
		\item Generate a family $(W_i)_{i \ge 1}$ of independent and identically distributed copies of the random graphon~$W_{\cC}$ with law $L_{\cC}$. 
		\item With $W_{\cH}$ the random graphon with law $L_{\cH}$, generate $G(\infty, W_{\cH})$ (independently from all other considered random variables) and for all $i,j \ge 1$ with $i \ne j$ set $q_{i,j}=1$ if $i$ and $j$ are adjacent in  $G(\infty, W_{\cH})$, and $q_{i,j}=0$ otherwise.
		\item   This allows us to define, for all $i, j \ge 1$, an open rectangle \[
		Q_{i,j} = ]s_{i-1}, s_{i}[ \times ]s_{j-1}, s_{j}[
		\] of length $V_i$ and height $V_j$. We set for all $0<x,y<1$
		\[
		W_{\cG}(( 1-x) s_{i-1} + x s_{i}, (1-y) s_{j-1} + y s_j) = \begin{cases}
			W_{i}(x,y), \quad \text{ if } i =j \\
			q_{i,j}, \quad \text{ if } i \ne j.
		\end{cases}
		\]
		\item On the measure zero set $([0,1] \times [0,1]) \setminus \bigcup_{i,j} Q_{i,j}$ set $W_{\cG}$ equal to zero.
	\end{enumerate}
\end{definition}

Given an integer $k \ge 1$, the random graph $G(k, W_{\cG})$ induced on $[k]$ (see Section~\ref{sec:encoding}) admits the following description involving a Chinese restaurant process with an $(\alpha, \theta)$ seating plan.

\begin{lemma}
	\label{le:finallanif}
	For each $k \ge 1$, the random graph $G(k, W_{\cG})$ is distributed like the outcome of the following procedure.
	\begin{enumerate}
		\item  Run an $(\alpha, \theta)$ Chinese restaurant process until step~$k$. This results in a number $r \ge 1$ of tables with $k_1, \ldots, k_r$ customers. Customers are represented by integers from $[k]$.
		\item Sample an independent copy $H$ of $G(r, W_{\cH})$. 
		\item For each $1 \le i \le r$ sample an independent copy $C_i$ of $G(k_i, W_{\cC})$. (That is, the $C_i$ do \emph{not} share an underlying instance of the graphon $W_{\cC}$. Conditionally on~$r$ they are fully independent.)
		\item  Customers $u,v \in [k]$ that are seated at differing tables $i, j \in [r]$, $i \ne j$ are declared adjacent in a graph on $[k]$ if and only if $i$ and $j$ are adjacent in the graph~$H$.
		\item For each $1 \le i \le r$, let $u_{i, 1} < \ldots < u_{i, k_i}$ denote the customers at table $i$, and set $u_{i, x}$ adjacent to $u_{i,y}$ for $x,y \in [k_i]$ if and only if $x$ and $y$ are adjacent in~$C_i$.
	\end{enumerate}
\end{lemma}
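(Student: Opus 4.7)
The plan is to unfold the definition of $G(k, W_{\cG})$ and match the resulting sampling procedure to the three-step description in the statement. Recall that $G(k, W_{\cG})$ is built from i.i.d.\ uniforms $X_1, \ldots, X_k$ on $[0,1]$, placing an edge between $m, m' \in [k]$ independently with conditional probability $W_{\cG}(X_m, X_{m'})$. For each $m \in [k]$, let $T_m \in \ndN$ be the unique index (a.s.\ well-defined) such that $X_m \in \,]s_{T_m-1}, s_{T_m}[\,$, where $s_\ell = V_1 + \cdots + V_\ell$ as in Definition~\ref{def:wg}. The fibres of $m \mapsto T_m$ yield a random partition $\Pi$ of $[k]$.

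My first step would be to identify $\Pi$ with the seating plan of an $(\alpha, \theta)$ Chinese restaurant process. This is exactly the paint-box construction recalled in Section~\ref{sec:crp}: conditionally on $(V_\ell)$, the probability that $\Pi$ has classes listed in order of their least element with sizes $(k_1, \ldots, k_r)$ equals $s(k_1, \ldots, k_r) \sum_{i_1, \ldots, i_r \text{ distinct}} \prod_j V_{i_j}^{k_j}$, and Pitman's EPPF formula~\eqref{eq:EPPF} identifies the unconditional distribution as the $(\alpha, \theta)$ CRP seating plan. Setting $r = |\Pi|$ and relabelling tables by order of first appearance through a random injection $\pi : [r] \to \ndN$, let $k_j$ be the size of the $j$-th class.

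Next, I would describe the edges conditionally on $(V_\ell)$, $(X_m)_{m \in [k]}$, and $\Pi$. For each table $j \in [r]$, the rescaled positions $(X_m - s_{\pi(j)-1})/V_{\pi(j)}$ with $T_m = \pi(j)$ are conditionally i.i.d.\ uniform on $]0,1[$; by Definition~\ref{def:wg}(4) the graphon $W_{\cG}$ on the diagonal block $Q_{\pi(j), \pi(j)}$ equals the rescaled copy $W_{\pi(j)}$ of $W_{\cC}$, so the induced subgraph on the customers at table $j$ is distributed as $G(k_j, W_{\cC})$, and (since distinct $W_{\pi(j)}$ are independent) conditionally independently across tables. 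Between two distinct tables $j \ne j'$, the edge indicator is the deterministic value $q_{\pi(j), \pi(j')}$ read off from $G(\infty, W_{\cH})$, independent of the intra-table randomness.

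Finally, I would reduce the inter-table structure to a finite head graph via exchangeability. Since $G(\infty, W_{\cH})$ is constructed from i.i.d.\ uniforms independent of $(V_\ell)$ and $(X_m)$, its joint law is invariant under finite permutations of $\ndN$; in particular, conditionally on $r$, the subgraph of $G(\infty, W_{\cH})$ induced on the random label set $\{\pi(1), \ldots, \pi(r)\}$ has the same distribution as $G(r, W_{\cH})$ and is independent of the intra-table graphs. Assembling these three pieces reproduces exactly the sampling procedure in the lemma. The main subtlety I anticipate lies in the bookkeeping of independence after conditioning on $\Pi$: specifically, verifying that relabelling by the random map $\pi$ preserves the $G(r, W_{\cH})$ distribution of the head graph, which relies on the independence of $G(\infty, W_{\cH})$ from $(V_\ell)$ and the uniforms combined with its exchangeability.
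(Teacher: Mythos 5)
Your proposal is correct and follows essentially the same route as the paper's proof: unfold the sampling of $G(k, W_{\cG})$ via i.i.d.\ uniforms, identify the induced partition of $[k]$ with the $(\alpha,\theta)$ Chinese restaurant process through the paint-box/EPPF argument of Section~\ref{sec:crp}, and then read off the intra-block and inter-block adjacencies from Definition~\ref{def:wg}. The only difference is that you spell out the exchangeability argument showing the head subgraph induced on the randomly labelled occupied blocks is distributed as $G(r, W_{\cH})$, a step the paper treats implicitly; this is a welcome clarification, not a deviation.
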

\begin{proof}
	We use the notation of Definition~\ref{def:wg}. In particular, $s_i = \sum_{\ell=1}^i V_\ell$ for the points $(V_\ell)_{\ell \ge 1}$ of a $\mathrm{PD}(\alpha, \theta)$ process. 
	Sample $k \ge 1$ uniform independent points $X_1, \ldots, X_k \in [0,1]$, such that $G(k, W_{\cG})$ is formed by letting conditionally on $W_{\cG}$ any vertices $a,b \in [k]$ be adjacent independently with probability $W_{\cG}(X_a, X_b)$. 
	
	Almost surely, all points fall into intervals of the form $]s_{i-1},s_i[$ for some $i \ge 1$. We may order the ``occupied'' intervals $I_1, \ldots, I_{\tilde{r}}$ according to the first time one of the points falls into them, with $\tilde{r}$ the total number of ``occupied'' intervals, and let  $\tilde{k}_1, \ldots, \tilde{k}_{\tilde{r}}$ denote the total number of points in each interval in this order.
	By the discussion in Section~\ref{sec:crp}, in particular Equation~\eqref{eq:EPPF},  we have
	\begin{align}
		\label{eq:ath}
		(\tilde{r}, \tilde{k}_1, \ldots, \tilde{k}_{\tilde{r}})  \eqdist (r, k_1, \ldots, k_r).
	\end{align}
	
	
	By construction of $W_{\cG}$, for $1 \le i,j \le \tilde{r}$, if $i \ne j$ then any point $a \in [k]$ with $X_a \in I_i$ is adjacent to any point $b \in [k]$ with $X_b \in I_j$ if and only if $i$ and $j$ are adjacent in $G(\infty, W_{\cH})$. If $i=j$, the points $X_{a_1}, \ldots, X_{a_t}$ belonging to $I_i$ are uniformly distributed (conditioned on belonging to $I_i$), hence the subgraph of $G(k, W_{\cG})$ formed by them is distributed like $G(t, W_{\cC})$ (after relabelling $s$ to $a_s$ for all $1 \le s \le t$), and conditionally independent from the subgraphs spanned from the points occupying other intervals. Together with Equation~\eqref{eq:ath}, this yields that $G(k, W_{\cG})$ is distributed exactly as stated, hence completing the proof.
\end{proof}

In the general setting considered here, the random graphon $W_{\cG}$ is the limit of the random graphs $\mG_n$ as the number of vertices $n$ tends to infinity.

\begin{theorem}
	\label{te:main1}
Suppose that Conditions \eqref{eq:graphcrit}--\eqref{eq:compconv} hold. Then
\[
	\mG_n \convd  W_{\mathrm{PD}}(\alpha, -\alpha \beta, L_{\cH}, L_{\cC})
\]
as random graphons.
\end{theorem}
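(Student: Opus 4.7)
The plan is to apply Lemma~\ref{le:mega}: it suffices to show that for each fixed $k\ge 1$ the random labelled graph $\mG_n[k]$ converges in distribution, as $n\to\infty$, to $G(k,W_{\cG})$. Lemma~\ref{le:finallanif} provides an explicit three-layer description of $G(k,W_{\cG})$ via an $(\alpha,-\alpha\beta)$ Chinese restaurant process, an independent copy of $G(r,W_{\cH})$, and independent copies of $G(k_i,W_{\cC})$. My strategy is to reproduce this three-layer description on the finite side.

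Pick $k$ uniform random vertices of $\mG_n$. Grouping them according to the component of $\mG_n$ they lie in induces a random partition $\pi_n$ of $[k]$ into $r$ blocks $B_1,\ldots,B_r$ of sizes $k_1,\ldots,k_r$, ordered by smallest element. Conditionally on $\pi_n$, the labelled graph $\mG_n[k]$ is determined by (i) the restriction of the head $\mH_{N_n}$ to the $r$ visited head-vertices, which encodes all inter-block edges, and (ii) for each $1\le i\le r$, the restriction of the $i$-th visited component to its $k_i$ marked vertices, which encodes the edges within $B_i$. The basic structural observation is that conditionally on the component-size vector $(N_n,K_1,\ldots,K_{N_n})$ the head and the components of $\mG_n$ are mutually independent with the announced distributions, hence the three layers (partition, head restriction, component restrictions) are conditionally mutually independent as well.

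Three convergences then drive the argument. For the partition: Theorem~\ref{te:gibbsdilutegene} gives $\Upsilon_n\convd \mathrm{PD}(\alpha,-\alpha\beta)$, and the paint-box characterisation from Section~\ref{sec:crp} upgrades this to $\pi_n\convd \pi_\infty$, where $\pi_\infty$ is the partition of $[k]$ produced by an $(\alpha,-\alpha\beta)$ Chinese restaurant process. For the head: although the $r$ visited head-vertices form a size-biased sample of $[N_n]$, the distribution of $\mH_{N_n}$ is invariant under vertex relabelling (since $\cH$ is closed under relabelling and $\omega_{\cH}$ is isomorphism-invariant), and the visited vertex set is independent of $\mH_{N_n}$ given $(N_n,K_1,\ldots,K_{N_n})$; hence the induced subgraph on the $r$ visited vertices has the law of $\mH_{N_n}[r]$. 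Since $N_n\to\infty$ in probability by~\eqref{eq:numcomb} and $\mH_n\convd W_{\cH}$, a standard conditioning argument then yields $\mH_{N_n}[r]\convd G(r,W_{\cH})$. For the components: each visited component has ambient size $K_i^{\star}$ of order $n$ (being associated with a strictly positive mass in the $\mathrm{PD}$ limit), and conditionally on $K_i^\star=m$ is distributed as $\mC_m$; hence its restriction to $k_i$ marked vertices converges in distribution to $G(k_i,W_{\cC})$, jointly and independently across~$i$.

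The main obstacle is combining these marginal convergences into a single joint statement for $\mG_n[k]$. My approach is to expand, for each labelled graph $H$ on $[k]$,
\[
\Prb{\mG_n[k]=H} \;=\; \sum_{\pi} \Prb{\pi_n=\pi}\, P^{\mathrm{head}}_{n,\pi}(H)\, \prod_{B\in\pi} P^{\mathrm{comp}}_{n,B}(H),
\]
where $P^{\mathrm{head}}_{n,\pi}(H)$ is the probability that the head restriction realises the inter-block pattern of $H$ at $\pi$, and $P^{\mathrm{comp}}_{n,B}(H)$ is the probability that the block-$B$ component restriction realises $H|_B$; this decomposition rests on the conditional independence noted above. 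Since the sum has only finitely many terms, the three convergences yield a term-by-term passage to the limit, and the resulting expression matches $\Prb{G(k,W_{\cG})=H}$ computed from Lemma~\ref{le:finallanif}. A minor side-point one must verify is that each visited component size $K_i^{\star}$ diverges in probability, which again follows from Theorem~\ref{te:gibbsdilutegene}.
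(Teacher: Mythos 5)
Your proposal is correct and follows essentially the same route as the paper: decompose $\mG_n$ via the Gibbs partition with weights $(h_n)$, $(c_n)$, apply Theorem~\ref{te:gibbsdilutegene} and the paint-box/CRP description to get convergence of the induced partition of the $k$ sampled vertices, use exchangeability and the conditional product structure of head and components to identify the limits of the restricted head and component subgraphs with $G(r,W_{\cH})$ and $G(k_i,W_{\cC})$, and conclude via Lemma~\ref{le:finallanif} and Lemma~\ref{le:mega}. Your write-up is merely somewhat more explicit than the paper about the size-biased sampling of head vertices and the term-by-term factorisation over partitions, but these are the same ingredients.
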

\begin{proof}
	Recall that the information on the head structure of a graph in $\cG$ is kept.
	Hence the graph $\mG_n$ may be decomposed in a unique way into a head-structure with a random size $N_n$, and $N_n$ components with sizes $K_1, \ldots, K_{N_n}$ ordered by the smallest contained vertex label.

	The generating series of the involved graph classes are related by the composition operation
	\[
		\cG(z) = \cH(\cC(z)).
	\]
	Hence the number $N_n$ and sizes $K_1, \ldots, K_{N_n}$ of components are identically distributed as in the Gibbs partition model with weights $v_n = \frac{|\cH_n|}{n!}$ and $w_n = \frac{|\cC_n|}{n!}$ for all $n \ge 0$.  Conditions~\eqref{eq:graphcrit},~\eqref{eq:graphcondw} and~\eqref{eq:graphcondv} allow us hence to apply~Theorem~\ref{te:gibbsdilutegene}, yielding
	\begin{align}
		\label{eq:pdlimit}
		\sum_{\substack{1 \le i \le N_n \\ K_{i} > 0}} \delta_{K_{i} / n} \convd \mathrm{PD}(\alpha, \theta)
	\end{align}
	as point-processes for $\theta = -\alpha \beta$.
	
	Let $k \ge 1$ be given and choose vertices $v_1, \ldots, v_k$ of $\mG_n$ uniformly and independently at random.  Let $L_n$ denote the smallest index such that $v_1, \ldots, v_k$ belong to some of the $L_n$th largest components.  The points of a two-parameter Poisson-Dirichlet process sum up to one. Therefore, by~\eqref{eq:pdlimit},  $L_n$ is stochastically bounded.
	That is, $v_1, \ldots, v_k$ belong asymptotically to the macroscopic components whose joint distribution after rescaling by $n^{-1}$ is determined by~\eqref{eq:pdlimit}.
	
	The partition of the vertex set of $\mG_n$ into components induces a partition of $v_1, \ldots, v_k$ into components. Let $s \ge1$ denote the number of these components and $n_1, \ldots, n_s \ge 1$ with $\sum_{i=1}^s n_i = k$ the number of indices corresponding to each component, ordered by the smallest index.

	It follows by~\eqref{eq:EPPF} that for all $k_1, \ldots, k_r \ge 1$, $r \ge 1$ with $\sum_{i=1}^r k_i = k$ we have
	\begin{align}
		\lim_{n \to \infty} \Pr{ (n_1, \ldots, n_s) = (k_1, \ldots, k_r)} = p(k_1, \ldots, k_r),
	\end{align}
	the probability for a Chinese restaurant process with an $(\alpha, \theta)$ seating plan to yield $r$ tables with $k_1, \ldots, k_r$ customers at time $k$.

	Conditionally on its size, each component is drawn at random with probability proportional to its weight among all graphs from $\cC$ with the corresponding vertex set. Likewise, conditionally on its size the head-structure is drawn with probability proportional to its weight among all graphs from $\cH$ with a corresponding fixed vertex set of the same size.

	Using~\eqref{eq:compconv} and Lemma~\ref{le:finallanif}, it follows that as $n \to \infty$, 
	\[
	G(k, \mG_n) \convd G(k, W_{\mathrm{PD}}(\alpha, -\alpha \beta, L_{\cH}, L_{\cC})).
	\]
	Since this holds for all $k \ge 1$, it follows by Lemma~\ref{le:mega} that
	\[
		\mG_n \convd W_{\mathrm{PD}}(\alpha, -\alpha \beta, L_{\cH}, L_{\cC})
	\]
	as random graphons.
\end{proof}



\subsection{Repellent regimes}
\label{sec:graphrepel}

The preceding section describes a dilute regime where the asymptotic shapes of the head structure and component structures combine to a new limit object. There exist other regimes where instead the head structures or component structures dominate, or the limit is a mixture of the two limiting shapes.

\subsubsection{Dense regime}

We describe a regime where the head structure dominates. Suppose that $\rho_\cH>0$ and
\[
h_n = \ell_\cH(n) n^{-b}\rho_\cH^{-n} 
\]
for a slowly varying function $L_\cH$ and an exponent $b>1$. Furthermore suppose that one of the following  cases holds.
\begin{enumerate}[\qquad i)]
	\item 	We have $\rho_\cH = \cC(\rho_\cC)$ and 
	\[
	c_n = \ell_\cC(n) n^{- a} \rho_\cC^{-n}
	\]	
	for a slowly varying function  $\ell_\cC$ and an exponent $a>2$, such that  $1 < b <a$, or  $b=a$ and  $\ell_\cC(n) = o(\ell_\cH(n))$.
	\item We have $\rho_\cH < \cC(\rho_\cC)$ and $\gcd\{ n \ge 0 \mid c_n > 0\} = 1$.
\end{enumerate}
Furthermore, assume that
\[
	\mH_n \convd W_\cH
\]
for a random graphon $W_\cH$.

Under these assumptions, the graphon limit of $\mG_n$ is identical to the graphon limit of the head structure:

\begin{theorem}
	\label{te:dense}
	Under the preceding assumptions, we have $\mG_n \convd W_\cH$ as $n \to \infty$.
\end{theorem}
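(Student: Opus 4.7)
The plan is to verify the criterion of Lemma~\ref{le:mega}, namely to establish $G(k, \mG_n) \convd G(k, W_\cH)$ for each fixed $k \ge 1$. The underlying intuition is that in this dense regime the head structure dominates: there is a linear number of components, each of size $o(n)$, so a uniform $k$-tuple of vertices in $\mG_n$ asymptotically falls into $k$ distinct components, and the resulting induced subgraph then reduces to a subgraph of the head structure on the corresponding indices.

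First, I would apply the Gibbs partition framework of Section~\ref{sec:gibbs} with weights $v_n = h_n$ and $w_n = c_n$, which is legitimate in view of the identity $\cG(z) = \cH(\cC(z))$. In both cases (i) and (ii) the relevant exponential tilt yields a component-size distribution $X$ with $\Ex{X} < \infty$: in case (i) one tilts at $\rho_\cC$ and obtains $\Pr{X = n} \sim \ell_\cC(n) n^{-a}/\rho_\cH$ with $a > 2$; in case (ii) one tilts at the unique $\zeta \in (0, \rho_\cC)$ with $\cC(\zeta) = \rho_\cH$, which yields exponentially decaying tails. Standard results on Gibbs partitions in these regimes, as developed in~\cite{MR2245368, MR4780503}, then give $N_n / n \convp 1/\Ex{X}$ together with $(\max_{i \le N_n} K_i)/n \convp 0$; in case (i) this uses the fact that the maximum of a linear number of iid copies of a variable with polynomial tail exponent $a-1 > 1$ is of order $n^{1/(a-1)} = o(n)$, while case (ii) rests on classical exponential concentration.

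Next, I sample $v_1, \ldots, v_k$ iid uniformly on the vertex set of $\mG_n$ and let $f(i) \in [N_n]$ denote the index of the component containing $v_i$. Since $\Pr{f(i) = j \mid \text{partition}} = K_j/n$, conditioning on the partition and applying bounded convergence, the probability that the $f(i)$ fail to be pairwise distinct is bounded by $\binom{k}{2}\Exb{\max_j K_j/n} = o(1)$, while $v_1, \ldots, v_k$ themselves are pairwise distinct with probability $1 - O(1/n)$. On the resulting high-probability event, edges in $\mG_n$ between vertices belonging to distinct components are dictated by the head, so $\mG_n(v_1, \ldots, v_k) = \mH_{N_n}(f(1), \ldots, f(k))$. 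Conditional on $N_n = m$ and on the component sizes, the multiplicative form of the supergraph weight makes $\mH_m$ an independent draw proportional to $\omega_\cH$, and its law on $\cH_m$ is exchangeable on $[m]$ since $\omega_\cH$ is isomorphism-invariant. The indices $f(1), \ldots, f(k)$ depend only on the partition and the auxiliary uniform sampling, hence are independent of $\mH_m$. On the event that the $f(i)$ are distinct, exchangeability yields $\mH_m(f(1), \ldots, f(k)) \eqdist \mH_m(1, \ldots, k)$, which coincides with $G(k, \mH_m)$ conditional on its $k$ sampled points being distinct, an event of probability $1 - O(1/m)$. Combining this with $N_n \to \infty$ in probability, the hypothesis $\mH_n \convd W_\cH$, and Lemma~\ref{le:mega}, one concludes $\mH_{N_n}(f(1), \ldots, f(k)) \convd G(k, W_\cH)$, whence $G(k, \mG_n) \convd G(k, W_\cH)$, and a final appeal to Lemma~\ref{le:mega} yields the theorem.

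The main obstacle is the first step: quantifying $(\max_i K_i)/n \convp 0$ and $N_n \asymp n$ in both regimes. Case (i) is the more delicate one since both $N$ and $X$ have polynomial tails; one must show that the strict ordering $b < a$ (or the slowly varying domination $\ell_\cC = o(\ell_\cH)$ when $b = a$) prevents the heavy-tailed regime of Theorem~\ref{te:gibbsdilutegene} and instead forces a law-of-large-numbers behaviour for $N_n$ with components remaining small. The local and renewal asymptotics in the spirit of Proposition~\ref{pro:doney}, combined with the moment assumptions $a > 2$, $b > 1$ that place the model outside the $\alpha$-stable regime treated in Section~\ref{sec:gibbs}, supply the required ingredients. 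Case (ii) is comparatively routine, as the exponential decay of the tilted component law yields exponential concentration of $\max_i K_i$ at the scale $O(\log n)$.
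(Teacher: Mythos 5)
Your proposal is correct and follows essentially the same route as the paper: identify the component sizes of $\mG_n$ with a Gibbs partition, invoke the results of~\cite{MR4780503} to get a linear number of components with maximal size $o_p(n)$, then argue that $k$ uniformly sampled vertices land in distinct components with high probability, so that the induced subgraph reduces by exchangeability to a $k$-sample of the head structure, and conclude via Lemma~\ref{le:mega}. The only difference is cosmetic: the paper cites \cite[Thm.~3.1, Cor.~3.4]{MR4780503} directly for both cases instead of your case-by-case tilting heuristics, and your spelled-out exchangeability step makes explicit what the paper states more briefly.
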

\begin{proof}
	As argued in the proof of Theorem~\ref{te:main1}, the number $N_n$ and sizes $K_1, \ldots, K_{N_n}$ of components in $\mG_n$ coincide with those of a Gibbs partition model with weight-sequences $(h_n)_{n \ge 0}$ and  $(c_n)_{n \ge 0}$.
	
	The assumptions allow us to apply~\cite[Thm. 3.1]{MR4780503}, yielding for $\kappa= \min(a-1,2)$ that
	\[
		\frac{N_n - n / \mu}{L(n)n^{1/\kappa}} \convd X_\kappa
	\]
	for some constant $\mu>0$, a slowly varying function $L(n)$ and a $\kappa$-stable random variable $X_\kappa$. Furthermore, by~\cite[Cor. 3.4]{MR4780503} 
	\[
		\frac{1}{L(n)n^{1/\kappa}} \max(K_1, \ldots, K_{N_n})  \convd \begin{cases}Y, &1<\kappa<2 \\ 0, &\kappa=2 \end{cases}
	\]
	for a Fr\'{e}chet-distributed random variable $Y \ge 0$.
	
	In particular,  the components scale at the order $o(n)$. Hence if we select vertices $v_1, \ldots, v_k$, $k \ge 1$ of $\mG_n$ independently at random, the probability that two or more fall into  the same component tends to zero as $n \to \infty$. Thus, in the likely event that they fall into distinct components,  any two vertices are adjacent if and only their components are adjacent in the macroscopic head structure.
	
	 By exchangeability it follows that conditional on the number $N_n$ of components, the indices of the up to $k$ components containing the selected vertices are uniformly distributed and independent. This proves
	 \[
	 	G(k, \mG_n) \convd G(k, W_{\cH})
	 \]
	as $n \to \infty$. Since this holds for arbitrarily large $k$, $\mG_n \convd W_{\cH}$ follows.	
\end{proof}

\subsubsection{Condensation regimes}

We describe a regime where a giant component dominates. We assume that there exists a random graphon  $W_\cC$ such that  
\begin{align*}
	 \mC_n \convd W_{\cC}
\end{align*}
as graphons. There are three subcases:

\vspace{1em}

\noindent \textbf{Convergent case:}
	The convergent regime assumes that $0<\cH'(\cC(\rho_\cC))< \infty$ and any of the following conditions hold.

	\begin{enumerate}[\qquad i)]
		\item We have $\rho_\cH = \cC(\rho_\cC)$ and \[
		h_n = \ell_\cH(n) n^{-b}\rho_\cH^{-n} \qquad \text{and} \qquad c_n = \ell_\cC(n) n^{- a} \rho_\cC^{-n}
		\]
		for slowly varying functions~$\ell_\cH$ and $\ell_\cC$, an exponent $b > 2$, and an exponent~$a$ such that either $1 < a < b$, or $a=b$ and $\ell_\cH(n) = o(\ell_\cC(n))$.
		\item We have $\infty \ge \rho_\cH > \cC(\rho_\cC)$ and \[
		\frac{c_n}{c_{n+1}} \to \rho_\cC > 0 \qquad \text{and} \qquad \frac{1}{c_n} \sum_{i+j = n} c_i c_j \to 2 \cC(\rho_\cC) < \infty\] as $n \to \infty$. (Note that the last two limits are automatically satisfied in case $c_n = \ell_\cC(n) n^{- a} \rho_\cC^{-n}$ with $a>1$ constant,  $\ell_\cC$ slowly varying, and $\rho_\cC > 0$.)
		\item We have $\sum_{k \ge 0} h_k \rho_{\cH}^k k^{1+a+\delta} < \infty$ and $c_n = \ell_\cC(n) n^{- a} \rho_\cC^{-n}$ for $a>1$, $\delta>0$ constants  and $\ell_\cC$ a slowly varying function.
		\item We have $c_n \sim c_\cC n^{- a} \rho_\cC^{-n}$ for constants $a>1$, $c_\cC > 0$, and additionally one of the following conditions hold.
		\subitem a) $\sum_{k \ge 0} h_k \rho_{\cH}^k k^{1+a} < \infty$.
		\subitem b) $1<a<2$.
		\subitem c) $a = 2$ and $\sum_{k \ge 0} h_k \rho_{\cH}^k k (\log k)^{2 + \delta} < \infty$ for some $\delta >0$.
		\subitem d) $2<a<3$ and $h_n \rho_{\cH}^n = o(n^{-a})$.
		\subitem e) $a=3$ and $h_n \rho_{\cH}^n = o(1 / (n^3 \log \log n))$.
	\end{enumerate}

These conditions originate from the study of randomly stopped sums~\cite{MR4038060} and composition schemes with subexponential densities~\cite{MR0348393,MR772907}.

\vspace{1em}

\noindent\textbf{Superexponential case:} In this regime we assume $\rho_\cH \in ]0,\infty]$ and $\rho_\cC= 0$. Furthermore, assume that for any $k \ge 2$ we have
\[
	\sum_{\substack{i_1 + \ldots i_k = n \\ 1 \le i_1, \ldots i_k < n - (k-1)}} c_{i_1} \cdots c_{i_k} = o(c_{n-(k-1)}).
\]
A sufficient condition for this equation is given~\cite[Lem. 6.17]{MR4132643}. We furthermore assume that $c_i>0$ for all large enough integers $i$.

\vspace{1em}

\noindent\textbf{Mesocondensation case: }  In this regime, we assume $\rho_\cH = \cC(\rho_\cC)$ and \[
h_n = \ell_\cH(n) n^{-b}\rho_\cH^{-n} \qquad \text{and} \qquad c_n = \ell_\cC(n) n^{- 1} \rho_\cC^{-n}
\]
for slowly varying functions~$\ell_\cH$ and $\ell_\cC$, and an exponent $b > -2$.

\vspace{1em}

\begin{theorem}
	\label{te:condensation}
	In any of the three preceding cases, we have $\mG_n \convd W_\cC$ as $n \to \infty$.
\end{theorem}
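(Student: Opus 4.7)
The plan is to follow the same two-step strategy as in the proofs of Theorem~\ref{te:main1} and Theorem~\ref{te:dense}: first determine the asymptotic shape of the component-size multiset of $\mG_n$ via the Gibbs partition reduction, and then transfer this information to graphon convergence by means of Lemma~\ref{le:mega} applied to the random subgraph induced on $k$ independent uniform vertices. As in those proofs, the identity $\cG(z) = \cH(\cC(z))$ identifies the number $N_n$ and the sizes $K_1, \ldots, K_{N_n}$ of the components of $\mG_n$ with those of a Gibbs partition with weight sequences $v_n = h_n$ and $w_n = c_n$.

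The central structural fact I would establish is that in each of the three regimes a single giant component of size $M_n$ emerges with $M_n / n \convp 1$. For the convergent case, this follows from the classical subexponential composition scheme~\cite{MR0348393,MR772907} together with the randomly stopped sum theorems of~\cite{MR4038060}; the numerical subcases (i)--(iv) each fall into one of these frameworks and in fact yield the stronger statement $n - M_n = O_p(1)$. For the superexponential regime ($\rho_\cC = 0$), the assumed iterated convolution bound combined with~\cite[Lem. 6.17]{MR4132643} yields $n - M_n = O_p(1)$ as well. For the mesocondensation regime, in which $c_n \rho_\cC^n$ is regularly varying of index $-1$, one performs a direct calculation of the Gibbs partition function using Karamata's theorem together with a one-big-jump argument for heavy-tailed sums of index $-1$; this delivers the weaker but still sufficient $M_n/n \convp 1$, while all remaining component sizes are $o_p(n)$.

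The transfer to graphon convergence is then routine. Fix $k \ge 1$ and sample $v_1, \ldots, v_k$ uniform independent vertices of $\mG_n$, and let $E_{n,k}$ denote the event that all $v_i$ belong to the giant component. Since $M_n / n \convp 1$, one has $\Pr{E_{n,k}} \to 1$. Conditional on the giant component occupying a vertex set $S$ with $|S| = m$, the induced subgraph is drawn from $\cC_m$ with probability proportional to $\omega_{\cC}$, hence, restricted to $E_{n,k}$, the random graph $G(k, \mG_n)$ is distributed like $G(k, \mC_{M_n})$ after a uniform relabelling of the $k$ vertices inside the giant component. Since $M_n \convp \infty$ and $\mC_n \convd W_\cC$ as graphons, Lemma~\ref{le:mega} gives $G(k, \mC_{M_n}) \convd G(k, W_\cC)$; combined with $\Pr{E_{n,k}} \to 1$ this yields $G(k, \mG_n) \convd G(k, W_\cC)$ for every fixed $k$, and a final application of Lemma~\ref{le:mega} upgrades this to $\mG_n \convd W_\cC$ as random graphons.

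The main obstacle is the precise verification of condensation in the mesocondensation case, which sits at the $\alpha = 1$ boundary of Theorem~\ref{te:gibbsdilutegene} and is not covered by it: the $\alpha$-stable limit machinery used there degenerates, and one instead needs a tailored one-big-jump argument to show that the largest summand in the Kolchin representation carries a proportion tending to one of the total mass $n$, while the residual mass is spread out in components of size $o_p(n)$. The convergent and superexponential regimes, by contrast, reduce directly to the off-the-shelf results cited above, and the graphon transfer via Lemma~\ref{le:mega} is entirely standard once condensation has been established.
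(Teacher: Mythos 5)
Your proposal follows essentially the same route as the paper: reduce via $\cG(z)=\cH(\cC(z))$ to the Gibbs partition model with weights $(h_n)$ and $(c_n)$, establish a giant component in each of the three regimes, and then transfer to graphon convergence by noting that $k$ uniformly sampled vertices all land in the giant component with high probability, so that $G(k,\mG_n)\convd G(k,W_\cC)$ and Lemma~\ref{le:mega} concludes. The transfer step you spell out (conditioning on the giant component's size $M_n$, using that it is then drawn from $\cC_{M_n}$ proportionally to weight, and that $M_n\convp\infty$) is exactly the argument the paper leaves implicit.

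The one substantive divergence is the mesocondensation case. The paper disposes of it by citing an external result (Thm.~1.1 and Thm.~1.2 of the cited work of Bosio et al.\ on Gibbs partitions), which gives precisely the statement you need: a giant component of size $n-o_p(n)$ while the number of components tends to infinity. You instead propose a hand-rolled Karamata/one-big-jump argument at the $\alpha=1$ boundary, which you correctly identify as outside the scope of Theorem~\ref{te:gibbsdilutegene}, but you only sketch it; note that this regime is more delicate than a classical big-jump estimate, since the number of summands in the Kolchin representation itself diverges, so a complete write-up would require genuine work (or the citation). Your stated conclusion for that case is nonetheless the correct one, and the convergent and superexponential cases do reduce to off-the-shelf results as you say; for the latter, the condensation statement is \cite[Thm.~6.18]{MR4132643} rather than Lem.~6.17, which only furnishes a sufficient condition for the assumed convolution bound.
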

\begin{proof}
	As argued in the proof of Theorem~\ref{te:main1}, the number  and sizes  of components in $\mG_n$ coincide with those of a Gibbs partition model with weight-sequences $(h_n)_{n \ge 0}$ and  $(c_n)_{n \ge 0}$.
	
	In the convergent case, we may apply~\cite{MR4780503} (see also~\cite{MR3854044}), yielding that there exists a giant component with size $n - O_p(1)$, with the remainder admitting an almost surely finite distributional limit (hence the name convergent case).
	
	In the superexponential case, we may apply~\cite[Thm. 6.18]{MR4132643}, yielding that there exists a giant component and the total size of all other components is smaller than a fixed deterministic bound with high probability. 
	
	In the Mesocondensation case, we may apply~\cite[Thm. 1.1 and Thm. 1.2]{bosio2025gibbspartitionslatticepaths}, yielding that there exists a giant component with size $n - o_p(n)$ (and in contrast to the other two cases, the total number of components tends to infinity).
	
	It follows that in all three cases, if we draw a fixed number $k \ge 1$ of uniform random vertices of $\mG_n$, with high probability they all belong to the giant component. Since we assumed $\mC_n \convd W_{\cC}$, this proves
	\[
	G(k, \mG_n) \convd G(k, W_{\cC}).
	\]
	Consequently, $\mG_n \convd W_{\cC}$.
\end{proof}

\subsubsection{Mixture regime}
We describe a mixture regime, in which the limit of $\mG_n$ is a mixture of the limits of large head structures and large components.

Suppose that $\rho_\cH = \cC(\rho_\cC)$. Furthermore, suppose that
\[
h_n = \ell_\cH(n) n^{-a}\rho_\cH^{-n} \qquad \text{and} \qquad c_n = \ell_\cC(n) n^{- a} \rho_\cC^{-n}
\]	
for slowly varying functions~$\ell_\cH, \ell_\cC$ and a constant $a > 2$. Let 
\[
\mu = \cC(\rho_\cC)^{-1} \sum_{n \ge 1 } n c_n \rho_\cC^n.
\] 
Suppose that the limit
\[
p := \lim_{n \to \infty} \frac{\mu^{a-1}}{\cH'(\cC(\rho_\cC))}  \frac{\ell_\cH(n)}{\ell_\cC(n)} >0
\]
exists and is positive. Suppose that there exist random graphons $W_\cH$ and $W_\cC$ with
\begin{align*}
	\mH_n \convd W_{\cH} \qquad \text{and} \qquad \mC_n \convd W_{\cC}
\end{align*}
as graphons.

\begin{theorem}
	\label{te:mixture}
	Under the preceding assumptions, we have $0<p<1$ and \[
	\mG_n \convd B W_\cH + (1-B) W_{\cC}
	\] with $B$ denoting an independent $\mathrm{Bernoulli}(p)$ random variable.
\end{theorem}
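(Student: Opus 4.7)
The plan is to combine the Gibbs--partition decomposition used in the previous theorems with a dichotomy result on the partition shape in the matched--tail regime. As in the proof of Theorem~\ref{te:main1}, the number $N_n$ and sizes $(K_1,\ldots,K_{N_n})$ of components of $\mG_n$ are identically distributed as in a Gibbs partition with weight sequences $(h_n)_{n \ge 0}$ and $(c_n)_{n \ge 0}$. With $\rho_\cH = \cC(\rho_\cC)$ and both $h_n\rho_\cH^n$, $c_n\rho_\cC^n$ regularly varying with common index $-a$ for $a>2$, we sit in the balanced regime of the Gibbs--partition phase diagram of~\cite{MR4780503}, where neither the head nor the component structure is asymptotically negligible.

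The first step is to establish a dichotomy: there exist events $A_n$ and $B_n := A_n^c$ with $\Pr{A_n}\to 1-p$ and $\Pr{B_n}\to p$ such that on $A_n$ the partition contains a unique giant component of size $n-O_p(1)$ (as in the convergent case of Theorem~\ref{te:condensation}), while on $B_n$ the number of components concentrates at $N_n \sim n/\mu$ with all component sizes of order $o_p(n)$ (as in the dense case of Theorem~\ref{te:dense}). To prove this I would refine the asymptotics of $u_n = [z^n]\cH(\cC(z))$ via a two--term splitting of the convolution $\sum_\ell h_\ell [z^n]\cC(z)^\ell$: the terms with $\ell$ of order one combined with one atypically large $c$--summand give a contribution of order $\cH'(\cC(\rho_\cC))\,\ell_\cC(n) n^{-a}\rho_\cH^{-n}$, while the terms with $\ell$ close to $n/\mu$ and all $c$--summands of typical size give, by a Gnedenko--type local limit theorem for the tilted distribution, a contribution of order $\mu^{1-a}\,\ell_\cH(n/\mu)(n/\mu)^{-a}\rho_\cH^{-n}$. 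The hypothesis on $p$ fixes the limiting ratio of these two contributions; since both are strictly positive and finite in the limit, the resulting mixing probability lies in $(0,1)$, which gives $0<p<1$.

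Once the dichotomy is in place, the graph--limit statements on each event reduce to the arguments already carried out. Conditional on $B_n$, the proof of Theorem~\ref{te:dense} applies verbatim: a uniform $k$--sample of vertices of $\mG_n$ lands with high probability in $k$ distinct, small components, whose indices are jointly uniform in a head of size $\approx n/\mu$ by exchangeability, so that $G(k,\mG_n) \convd G(k,W_\cH)$ conditionally on $B_n$. Conditional on $A_n$, the argument of Theorem~\ref{te:condensation} shows that all $k$ sampled vertices lie in the giant component with high probability and yields $G(k,\mG_n) \convd G(k,W_\cC)$ conditionally on $A_n$. Combining these and using $\Pr{A_n}\to 1-p$, $\Pr{B_n}\to p$ gives, for every finite graph $H$,
\[
\Ex{t(H,\mG_n)} \longrightarrow p\,\Ex{t(H,W_\cH)} + (1-p)\,\Ex{t(H,W_\cC)} = \Ex{t(H, BW_\cH + (1-B)W_\cC)}.
\]
By Lemma~\ref{le:mega} together with the uniqueness statement~\eqref{eq:unique}, this identifies the distributional limit as $BW_\cH + (1-B)W_\cC$ with $B$ an independent $\mathrm{Bernoulli}(p)$ random variable.

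The main obstacle is the precise identification of $p$ and the uniform control required for the two--term partition--function asymptotics in the presence of slowly varying factors $\ell_\cH,\ell_\cC$ whose ratio only converges at an unspecified rate. The competition between the ``one big component'' and the ``many small components'' scenarios is a standard theme in subexponential composition theory (compare~\cite{MR4038060, MR4780503}), but promoting the mixing probability from a statement about expected values of functionals to a conditional law statement along each event requires a conditional local--limit argument on $A_n$ and $B_n$, which is the most delicate step.
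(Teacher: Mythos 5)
Your overall architecture coincides with the paper's: decompose $\mG_n$ as a Gibbs partition with weights $(h_n)$ and $(c_n)$, establish a dichotomy between a ``many small components'' event and a ``single giant component'' event, run the arguments of Theorems~\ref{te:dense} and~\ref{te:condensation} conditionally on each event, and combine the conditional limits into a Bernoulli mixture (your final step via expected homomorphism densities, Lemma~\ref{le:mega} and~\eqref{eq:unique} is fine). The difference is how the dichotomy is obtained: the paper does not prove it, but invokes \cite[Thm.~3.12]{MR4780503}, which asserts that the event $\cE_n=\{N_n \ge n/(2\mu)\}$ satisfies $\Pr{\cE_n}\to p\in\,]0,1[$ and that, conditionally on $\cE_n$ (respectively $\cE_n^c$), the Gibbs partition behaves as in the dense regime (respectively as in the convergent case of the condensation regime). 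You propose to re-derive this input by a two-term splitting of $u_n$, and that is exactly where your argument is incomplete.

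Two concrete gaps in the sketched dichotomy. First, the identification of the mixing probability: a two-term asymptotic of $u_n\rho_\cC^n$ (a big-jump contribution proportional to $\cH'(\cC(\rho_\cC))\,\ell_\cC(n)n^{-a}$ plus a renewal/local-limit contribution proportional to a power of $\mu$ times $\ell_\cH(n)n^{-a}$) naturally produces the limit of $\Pr{\cE_n}$ as the \emph{normalized proportion} of one contribution in the total. That proportion is automatically in $]0,1[$, but it is then a further, constant-sensitive claim that it equals the ratio $p$ defined in the hypotheses; conversely, if you identify the Bernoulli parameter with the plain ratio of the two contributions, then ``both contributions are positive and finite'' does not yield $p<1$. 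As written, your inference ``the resulting mixing probability lies in $(0,1)$, which gives $0<p<1$'' is a non sequitur: either the parameter or the bound needs a separate argument with the exact constants in both the big-jump estimate and the renewal estimate nailed down. This bookkeeping is precisely what the cited theorem supplies. Second, asymptotics of the partition function alone do not give the conditional structural statements you use afterwards (a unique giant of size $n-O_p(1)$ on one event; $N_n\sim n/\mu$ with all components $o_p(n)$ on the other); this requires a local-limit/big-jump analysis \emph{conditioned} on $\cE_n$ and $\cE_n^c$, which you correctly flag as the delicate step but do not carry out --- again, it is the content of \cite[Thm.~3.12]{MR4780503}. Once that result is used in place of your hand-rolled dichotomy, the remainder of your proposal matches the paper's proof.
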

\begin{proof}
	
	By~\cite[Thm. 3.12]{MR4780503}, the event $\cE_n= \{ N_n \ge n / (2 \mu)\}$ satisfies
	\begin{align*}
		\lim_{n \to \infty} \Pr{\cE_n} = p \in ]0,1[
	\end{align*}
	and the following conditional properties hold:
	\begin{enumerate}[\qquad a)]
		\item Conditioned on the event $\cE_n$, the Gibbs partition behaves as in the dense regime. Hence, by identical arguments as in the proof of~Theorem~\ref{te:dense} we have
		\begin{align*}
(\mG_n \mid \cE_n) \convd W_\cH.
		\end{align*}
		\item Conditioned on the complementary event $\cE_n^c$, the Gibbs partition behaves as in the convergent case of the condensation regime. Hence, by identical arguments as in the proof of~Theorem~\ref{te:condensation} we have
		\begin{align*}
			(\mG_n \mid \cE_n^c) \convd W_\cC.
		\end{align*}
	\end{enumerate}
	Thus,
	\[
	\mG_n \convd B W_\cH + (1-B) W_{\cC}
	\]
	for a $\mathrm{Bernoulli}(p)$ distributed random variable that is independent 
	of $W_\cH$ and $W_{\cC}$.
\end{proof}

\section{Finite dimensional marginals of the  Brownian tree}

\label{sec:browtre}

Let $\mT_n$ denote a critical Bienaym\'e--Galton--Watson tree conditioned on having $n$ leaves, whose offspring distribution has finite non-zero variance.

If we choose $k$ independent uniform random leaves from $\mT_n$, then together with the root vertex they span a subtree $R_k(\mT_n)$.  Call a vertex \emph{essential} if it is one of the $k$ selected leaves, a last common ancestor of any two of them, or the root of~$\mT_n$. The essential vertices form a tree $S_k(\mT_{n})$ with the genealogical structure inherited from $\mT_n$. We may recover $R_k(\mT_n)$ from $S_k(\mT_{n})$ by ordering the edges of $R_k(\mT_n)$ in some canonical way, and blowing up edges into paths with specified lengths $\bm{s}_n = (s_1, s_2, \ldots)$.

Let $\mS(k)$ denote a uniform random tree where each non-leaf has two children, except for the root which has one child. Such a tree has $2k-1$ edges.
It was shown by Aldous~\cite[Lem. 26]{MR1207226} that
\begin{align}
	\label{eq:rem1}
	\left(S_k(\mT_n), \frac{\sqrt{\Pr{\xi=0}\Va{\xi}}}{\sqrt{n}} \bm{s}_n\right) \convd (\mS(k), \bm{s})
\end{align}
with $\bm{s}$ an independent random $2k-1$ dimensional vector with positive coordinates whose density is given by
\[
3 \cdot 5 \cdots (2k-3) \left( \sum_{i=1}^{2k-1} x_i \right) \exp\left( -\left( \sum_{i=1}^{2k-1} x_i \right)^2 \right), \qquad x_1, \ldots, x_{2k-1} >0.
\]
In other words, $R_k(\mT_n)$ with edge lengths rescaled to $\frac{\sqrt{\Pr{\xi=0}\Va{\xi}}}{\sqrt{n}}$ converges in distribution to $\mR(k)$ with edge lengths set to the coordinates of $\bm{s}$. 

The result is a step in proving convergence of rescaled trees towards Aldous' continuum random tree, which is nowadays usually called the Brownian tree. To be precise, Aldous formulated this for $k$ random vertices in a Bienaym\'e--Galton--Watson tree conditioned on having $n$ vertices, but the proof is fully analogous for the setting of $k$ random leaves in a  tree  conditioned on having $n$ leaves. The only difference is that the additional factor $\sqrt{\Pr{\xi=0}}$ remains in~\eqref{eq:rem1}, which comes from the probability for a Bienaym\'e--Galton--Watson tree to have $n$ leaves. Furthermore,~\cite[Lem. 26]{MR1207226} is in fact a local limit theorem for $R_k(\mT_n)$, which entails that the vector of coordinate parities $\mathrm{par}(\bm{s}_n)$ with coordinates in $\{\mathrm{even},\mathrm{odd}\}$ satisfies
\begin{align}
	\label{eq:yooo}
	\left(S_k(\mT_n), \mathrm{par}(\bm{s}_n)\right) \convd (\mS(k), \bm{p}),
\end{align}
with $\bm{p}$ denoting a vector of $2k-1$ independent fair choices from $\{\mathrm{even},\mathrm{odd}\}$ that are also independent from $\mS(k)$. See~\cite[Thm. 6.6]{zbMATH07749496} and~\cite[Lem. 4.3]{MR4115736} for further extensions.

\section{Complement reducible supergraphs}

\label{sec:corsu}

\subsection{Background on complement reducible graphs}

We recall relevant background on the class $\cO$  of cographs (short for complement reducible graphs). It may be characterized as the smallest class of finite simple graphs that contains the graph with one vertex and is closed under taking graph complements and forming finite disjoint unions. 

Cographs may be encoded using cotrees~\cite{MR619603}. A \emph{generalized cotree} is a rooted unordered tree where the leaves carry labels and the internal vertices carry signs ($\oplus$ or $\ominus$). Internal nodes are required to have at least two children. The cograph corresponding to a generalized cotree is formed by taking the set of leaves (without any edges) and joining any two leaves by an edge if and only if their last common ancestor in the cotree has a plus sign. This representation is not necessarily unique, which is why for \emph{cotrees} (as opposed to generalized cotrees) we additionally require that the sign of any non-root vertex is the opposite from the sign of its parent. Thus, the sign of the root vertex determines all remaining signs. Any cograph has a unique representation as a cotree~\cite{MR619603}, if we adopt the convention that the tree consisting of a single vertex has no internal vertices.

Decomposing according to the degree of the root vertex yields that class $\cT$ of cotrees whose root-vertex has a plus sign satisfies the equation
\begin{align}
	\label{eq:cotsum}
	\cT(z) = z + \sum_{k \ge 2} \cT(z)^k / k!  = z + \cT(z) \hat{\phi}(\cT(z))
\end{align}
for $\hat{\phi}(z) = (\exp(z) - 1 -z ) / z$, which may be rewritten as
\begin{align}
	\label{eq:simtree}
	\cT(z) = z\phi(\cT(z))
\end{align}
with $\phi(z) = 1 / (1- \hat{\phi}(z)) =  z / (1 - \exp(z) + 2z)$. By symmetry,
\begin{align}
	\cO(z) = 2 \cT(z) - z.
\end{align}
Applying~\cite[Thm. 18.11, Rem. 7.5]{MR2908619} to Equation~\eqref{eq:simtree},  yields that $\cO(z)$ has radius of convergence 
\begin{align}
\rho_\cO = 2 \log(2) - 1
\end{align}
and
\begin{align}
	\label{eq:dobrasil}
	\cO(\rho_\cO) = 1
\end{align}
and
\begin{align}
	\label{eq:ipar}
	[z^n] \cO(z) \sim  \sqrt{\frac{2 \log(2) -1}{ \pi}} n^{-3/2} \rho_\cO^{-n}
\end{align}
as $n \to \infty$. This formula was first established in~\cite[Thm. 5]{MR2154567} using analytic methods instead.

From Equation~\eqref{eq:cotsum} it follows by a general principle~\cite[Lem 6.7]{MR4132643} (which in this case may also be easily verified directly), that a uniform cotree  with $n$ leaves is distributed like a Bienaym\'e--Galton--Watson tree conditioned on having $n$ leaves. The offspring distribution $\xi$ is given by 
\begin{align*}
	\Pr{\xi = 0} = 2 - 1 / \log(2), \quad \Pr{\xi=1} = 0, \quad \Pr{\xi = k} = \log^{k-1}(2) / k! \quad \text{for $k \ge 2$}.
\end{align*}
Clearly $\Ex{\xi}=1$ and  $0< \Va{\xi}<\infty$.
The sign of the root vertex is determined by a fair coin flip, and this determines the signs of all remaining inner vertices. 

The  uniform random cograph $\mO_n$ with $n$ (labelled) vertices is distributed like the cograph corresponding to this cotree. From Equation~\eqref{eq:yooo} it follows that if we select $k$ uniform independent leaves in the cotree, then the corresponding essential vertices together with the $\oplus$/$\ominus$ signs converge in distribution to the uniform random $k$-tree $\mS(k)$ with  signs on the internal vertices determined by independent fair coin-flips. Let $\mG(k)$ denote the corresponding graph. It follows that the random subgraph $\mO_n[k]$ (induced by the corresponding $k$ uniform independent random vertices of $\mO_n$) satisfies
\[
	\mO_n[k] \convd \mG(k).
\]
By Lemma~\ref{le:mega}, this entails that 
\begin{align}
	\label{eq:bgraphon}
	\mO_n \convd W_{\mathrm{Brownian}}
\end{align}
for some random graphon $W_{\mathrm{Brownian}}$. This was shown in~\cite[Thm. 1]{zbMATH07749496} by a similar short argument.  The random graphon $W_{\mathrm{Brownian}}$ is called the Brownian graphon, as it may be constructed from a  Brownian excursion~\cite[Sec. 4.2]{zbMATH07749496}. An independent proof via different methods is given in~\cite{zbMATH07749513}.

\subsection{Multidimensional super cographs with a prototype}

We set $\cO_1 = \cO$ to the class of cographs, and for each $d \ge 2$ we let $\cO_d$ denote the class of supergraphs with head structures from $\cO$ and component structures from the class of ordered pairs of a marked vertex and a graph from $\cO_{d-1}$ (with no edges between the marked vertex and its corresponding graph from $\cO_{d-1}$). Throughout this subsection we consider unweighted graphs (equivalently, each graph receives weight $1$). This way,
\[
	\cO_d(z) = \cO(z \cO_{d-1}(z)).
\]
The marked vertices in the components from a ``prototype'' with the adjacency relation inherited precisely from the head structure. We call graphs from $\cO_d$ $d$-dimensional complementary reducible graphs with a prototype.

\begin{lemma}
	\label{le:proasym}
	For each integer $d \ge 2$ we have 
	\[
	[z^n] \cO_d(z) \sim \frac{2^{-d}}{\Gamma(1- 2^{-d})} (2 \sqrt{\rho_\cO})^{2-2^{-(d-1)}} n^{-1-2^{-d}} \rho_\cO^{-n}
	\]
	with $\rho_\cO= 2\log(2)-1$ and $\cO_d(\rho_\cO) = 1$.
\end{lemma}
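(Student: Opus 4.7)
The plan is to prove the lemma by induction on $d \ge 1$, showing that in a $\Delta$-domain at $\rho_\cO$ the generating function $\cO_d$ admits a singular expansion of the form
\[
\cO_d(z) = 1 - A_d\,(1-z/\rho_\cO)^{2^{-d}} + O\bigl((1-z/\rho_\cO)^{\gamma_d}\bigr),\quad A_d := (2\sqrt{\rho_\cO})^{\,2-2^{-(d-1)}},
\]
for some exponent $\gamma_d > 2^{-d}$, and then extracting coefficients via the Flajolet--Sedgewick transfer theorem. The base case $d=1$ follows from the smooth implicit-function framework for simple varieties of trees applied to \eqref{eq:simtree}: since $\phi(z) = z/(1 - \exp(z) + 2z)$ is analytic at $z = \cT(\rho_\cO)$ and aperiodic, $\cT$ has a square-root singularity at $\rho_\cO = 2\log(2)-1$, and $\cO = 2\cT - z$ inherits the same expansion in a $\Delta$-domain. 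Matching the leading singular coefficient against \eqref{eq:ipar} via transfer gives $A_1 = 2\sqrt{\rho_\cO}$, in agreement with the formula for $d=1$.

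For the inductive step, first observe that $\cO_d(\rho_\cO) = \cO(\rho_\cO \cdot \cO_{d-1}(\rho_\cO)) = \cO(\rho_\cO) = 1$, so $\rho_\cO$ remains a singularity of $\cO_d$. Writing $u = 1 - z/\rho_\cO$ and using the inductive hypothesis,
\[
1 - \frac{z\,\cO_{d-1}(z)}{\rho_\cO} = A_{d-1}\,u^{2^{-(d-1)}} + O\bigl(u^{\gamma'}\bigr),
\]
with $\gamma' > 2^{-(d-1)}$, where we used that $u = o(u^{2^{-(d-1)}})$ for $d \ge 2$ to absorb the $(1-u)$ factor. Composing with the base-case expansion of $\cO$ yields
\[
\cO_d(z) = 1 - 2\sqrt{\rho_\cO}\,\sqrt{A_{d-1}}\,u^{2^{-d}} + O(u^{\gamma_d})
\]
for a new exponent $\gamma_d > 2^{-d}$, and the recursion $A_d = 2\sqrt{\rho_\cO}\sqrt{A_{d-1}}$ solves to $A_d = (2\sqrt{\rho_\cO})^{\,2 - 2^{-(d-1)}}$. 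Applying the transfer theorem together with the identity $\Gamma(1 - 2^{-d}) = -2^{-d}\,\Gamma(-2^{-d})$ then gives
\[
[z^n]\,\cO_d(z) \sim \frac{-A_d}{\Gamma(-2^{-d})}\,n^{-1-2^{-d}}\,\rho_\cO^{-n} = \frac{2^{-d}}{\Gamma(1 - 2^{-d})}\,(2\sqrt{\rho_\cO})^{\,2-2^{-(d-1)}}\,n^{-1-2^{-d}}\,\rho_\cO^{-n},
\]
which is the asserted asymptotic.

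The main obstacle is ensuring that each $\cO_d$ is analytic in a genuine $\Delta$-domain at $\rho_\cO$, as required by the transfer theorem. Base-case $\Delta$-analyticity is provided by Drmota--Lalley--Woods applied to \eqref{eq:simtree}. For the inductive step one must verify that $w(z) := z\,\cO_{d-1}(z)$ maps a $\Delta$-domain at $\rho_\cO$ in the $z$-plane into a $\Delta$-domain at $\rho_\cO$ in the $w$-plane; this holds because $w(\rho_\cO) = \rho_\cO$ and $1 - w(z)/\rho_\cO$ is asymptotic to $A_{d-1}(1-z/\rho_\cO)^{2^{-(d-1)}}$ near $\rho_\cO$, and the fractional-power map $u \mapsto u^{2^{-(d-1)}}$ strictly contracts sector arguments, so opening angles can be chosen to guarantee the containment. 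The composition of $\Delta$-analytic functions being $\Delta$-analytic, the transfer theorem applies at each stage and the induction goes through; the remaining constant-chasing is routine.
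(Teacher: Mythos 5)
Your proposal is correct in substance but takes a genuinely different route from the paper. The paper proves Lemma~\ref{le:proasym} by induction using the probabilistic machinery it has already set up: it writes $\cO_d(z)=\cO(z\,\cO_{d-1}(z))$, checks the dilute-regime hypotheses with $\alpha=2^{-(d-1)}$, $\beta=1/2$, and reads off the coefficient asymptotics from Equation~\eqref{eq:partitionfucntion}, i.e.\ from Kolchin's representation plus Doney's renewal-type estimate (Proposition~\ref{pro:doney}) and the stable-moment formula~\eqref{eq:xalphadilute}; no complex-analytic continuation is needed, only the real coefficient asymptotics of $\cO$ and $\cO_{d-1}$, and the computation reuses exactly the framework employed elsewhere in the paper. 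You instead run the induction at the level of singular expansions, propagating $\cO_d(z)=1-A_d(1-z/\rho_\cO)^{2^{-d}}+o(\cdot)$ with $A_d=2\sqrt{\rho_\cO}\,A_{d-1}^{1/2}$ and transferring; your constant-chasing ($A_d=(2\sqrt{\rho_\cO})^{2-2^{-(d-1)}}$, $\Gamma(1-2^{-d})=-2^{-d}\Gamma(-2^{-d})$) is right and reproduces the stated asymptotics, and the base case via the smooth implicit-function schema for~\eqref{eq:simtree} matches~\eqref{eq:ipar}. What your route buys is a full Puiseux-type description of $\cO_d$ near $\rho_\cO$ and independence from the stable-law computations; what it costs is precisely the $\Delta$-analyticity bookkeeping you flag: besides the local argument-contraction point near $\rho_\cO$ (which is fine, since the expansion of $\cO_{d-1}$ holds uniformly in its $\Delta$-domain), you must also control the composition away from the singularity, i.e.\ check that $|z\,\cO_{d-1}(z)|<\rho_\cO$ strictly for $z\neq\rho_\cO$ in the closed disk (this follows from nonnegativity and aperiodicity of the coefficients of $z\,\cO_{d-1}(z)$) and extend by continuity slightly beyond the disk so that $z\,\cO_{d-1}(z)$ lands in the $\Delta$-domain of $\cO$. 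That verification is standard for critical compositions but is the one step where your ``routine'' is doing real work; with it spelled out, your argument is a complete and valid alternative to the paper's proof.
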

\begin{proof}
	For $d=1$ the formula agrees with~\eqref{eq:ipar}. Equation~\eqref{eq:dobrasil} yields $\cO_1(\rho_\cO)=1$. For $d \ge 2$, let us assume that $\cO_{d-1}(\rho_\cO)=1$ and that the asymptotic formula for $[z^{n}]\cO_{d-1}(z)$ holds. Then \[
	\cO_{d}(\rho_\cO) = \cO(\rho_\cO \cO_{d-1}(\rho_\cO))= \cO(\rho_\cO) = 1
	\] and we are in the situation of Equation~\eqref{eq:partitionfucntion} for $U(z) = \cO_d(z)$,  $V(z) = \cO(z)$ and $W(z) = z \cO_{d-1}(z)$. Specifically, $\rho_v=\rho_u=\rho_w = \rho_\cO$, $\beta=1/2$, $\alpha = 2^{-(d-1)}$, and
	\begin{align*}
		 L_v(x) &\sim \frac{\sqrt{\rho_\cO}}{ \sqrt{\pi}}, \\
		 L_w(x) &\sim \frac{2^{-(d-1)}}{\Gamma(1- 2^{-(d-1)})} (2 \sqrt{\rho_\cO})^{2-2^{-(d-2)}} (\rho_\cO/  \alpha), \\
		 A(x) &\sim x^\alpha / ( \rho_\cO^{-1} L_w(x) ).
	\end{align*}
	Hence, by~\eqref{eq:partitionfucntion} and~\eqref{eq:xalphadilute}
	\begin{align*}
		[z^n] \cO_d(z) &\sim L_v(A(n)) A(n)^{-\beta} n^{-1} \alpha \Ex{X_{\alpha}^{\alpha\beta}} \rho_\cO^{-n} \\
		&\sim \frac{\sqrt{\rho_\cO}}{\sqrt{\pi}} n^{-\alpha \beta-1} (L_w(n)\rho_\cO^{-1})^\beta \alpha \Ex{X_{\alpha}^{\alpha\beta}} \rho_\cO^{-n} \\
		&\sim C n^{-2^{-d}-1}\rho_\cO^{-n}
	\end{align*}
	for
	\begin{align*}
		C &\sim \frac{\sqrt{\rho_\cO}}{ \sqrt{\pi}}  (L_w(n)\rho_\cO^{-1})^\beta \alpha \Gamma(1-\alpha)^{\beta} \frac{\Gamma(1- \beta)}{\Gamma(1-\alpha\beta)} \\
		&\sim L_w(n)^{1/2} \alpha \frac{\Gamma(1- \alpha)^{1/2}}{\Gamma(1-\alpha/2)} \\
		&\sim 2^{1- 2^{-(d-1)}} (\sqrt{\rho_\cO})^{2-2^{-(d-1)}} 2^{-(d-1)} \frac{1}{\Gamma(1-2^{-d})} \\
		&=  (2 \sqrt{\rho_\cO})^{2-2^{-(d-1)}} 2^{-d} \frac{1}{\Gamma(1-2^{-d})} .
	\end{align*}
	This completes the proof.
\end{proof}
Let $\mO_{n,d}$ denote the uniform random $n$-vertex   $d$-dimensional complementary reducible graphs with a prototype. Let $L_{\mathrm{Brownian}} = \mathfrak{L}(W_{\mathrm{Brownian}})$ denote the law of the Brownian graphon.

\begin{theorem}
	\label{te:main2}
	Let $d \ge 2$ denote an integer. As $n\to \infty$,
	\[
	\mO_{n,d} \convd W^{(d)} := W_{\mathrm{PD}}(2^{-(d-1)}, -2^{-d}, L_{\mathrm{Brownian}}, L_{d-1})
	\]
	as random graphons. Here we set $L_1 = L_{\mathrm{Brownian}}$, and recursively 
	$
		L_{d-1} = \mathfrak{L}(W^{(d-1)} )
	$
	the law of the limit for $d-1$. 
\end{theorem}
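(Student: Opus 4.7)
The plan is to prove this by induction on $d\ge 2$, with the base case $d=2$ and the inductive step both being direct applications of Theorem~\ref{te:main1} to the substitution scheme $\cO_d(z)=\cO(z\cO_{d-1}(z))$. The induction hypothesis at step $d$ is precisely that $\mO_{n,d-1}\convd W^{(d-1)}$, so that $L_{d-1}=\mathfrak{L}(W^{(d-1)})$ is well defined and a valid graphon law to feed into the Poisson--Dirichlet construction.

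First I would set up the Gibbs data for Theorem~\ref{te:main1}: take the head class $\cH=\cO$ and the component class $\cC$ consisting of ordered pairs of a marked vertex and a graph from $\cO_{d-1}$, which has exponential generating series $\cC(z)=z\cO_{d-1}(z)$. The criticality condition \eqref{eq:graphcrit} follows from Lemma~\ref{le:proasym}: $\rho_\cC=\rho_\cO=2\log(2)-1>0$ and $\cC(\rho_\cC)=\rho_\cO\cO_{d-1}(\rho_\cO)=\rho_\cO=\rho_\cH$. The regularity conditions \eqref{eq:abet}--\eqref{eq:graphcondv} come out of Lemma~\ref{le:proasym} (or \eqref{eq:ipar} in the base case $d=2$): $h_n\rho_\cH^n$ is asymptotic to a constant times $n^{-3/2}$, giving $\beta=1/2$, while $c_n\rho_\cC^n=\rho_\cO\,[z^{n-1}]\cO_{d-1}(z)\rho_\cO^{n-1}$ is asymptotic to a constant times $n^{-1-2^{-(d-1)}}$, giving $\alpha=2^{-(d-1)}$, hence $\theta=-\alpha\beta=-2^{-d}$. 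Karamata's theorem then supplies the tail sum condition in \eqref{eq:graphcondw}.

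Next I need to identify the graphon limits of $\mH_n$ and $\mC_n$ required by \eqref{eq:compconv}. For the head the identification is immediate: $\mH_n=\mO_n\convd W_{\mathrm{Brownian}}$ by \eqref{eq:bgraphon}, so $L_\cH=L_1$. For the component, a uniformly sampled element of $\cC_n$ consists of a uniformly chosen marked vertex together with a uniform random element of $\cO_{d-1}$ on the remaining $n-1$ vertices, and by construction the marked vertex is isolated from that graph. Since a single isolated vertex contributes a measure-zero strip to the graphon and does not affect any homomorphism density $t(H,\cdot)$ in the limit (Lemma~\ref{le:mega}), $\mC_n$ and $\mO_{n-1,d-1}$ have the same graphon limit. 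By the inductive hypothesis this limit is $W^{(d-1)}$, so $L_\cC=L_{d-1}$. For the base case $d=2$ we use $L_{d-1}=L_1=L_{\mathrm{Brownian}}$ directly from \eqref{eq:bgraphon}. Plugging these laws into Theorem~\ref{te:main1} yields
\[
\mO_{n,d}\convd W_{\mathrm{PD}}\bigl(2^{-(d-1)},\,-2^{-d},\,L_{\mathrm{Brownian}},\,L_{d-1}\bigr)=W^{(d)},
\]
which closes the induction.

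The only point that requires some care is the identification $\mC_n\convd W^{(d-1)}$: one must check that appending a disjoint isolated vertex to $\mO_{n-1,d-1}$ does not alter the graphon limit. This is the step most likely to be mishandled; the cleanest argument is to verify that for any finite $H\in\cU$ one has $t(H,\mC_n)=t(H,\mO_{n-1,d-1})(1-1/n)^{|V(H)|}+O(1/n)$, so the fourth criterion of Lemma~\ref{le:mega} transfers convergence from $\mO_{n-1,d-1}$ to $\mC_n$. Once this is in place, the rest of the argument is essentially bookkeeping of the exponents $\alpha=2^{-(d-1)}$ and $\theta=-2^{-d}$ through the recursion.
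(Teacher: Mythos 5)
Your proposal is correct and follows essentially the same route as the paper: verify the dilute-regime hypotheses via Lemma~\ref{le:proasym} with $\alpha=2^{-(d-1)}$, $\beta=1/2$, $\theta=-2^{-d}$, identify the head limit as $W_{\mathrm{Brownian}}$ and the component limit as $W^{(d-1)}$, and conclude by Theorem~\ref{te:main1} and induction on $d$. The only difference is that you spell out why the isolated marked vertex in each component does not alter the graphon limit (via homomorphism densities and Lemma~\ref{le:mega}), a point the paper's proof leaves implicit; this is a welcome addition, not a deviation.
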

\begin{proof}
	By Lemma~\ref{le:proasym}, we are in the dilute  regime for  $U(z) = \cO_d(z)$,  $V(z) = \cO(z)$ and $W(z) = z \cO_{d-1}(z)$ and hence $\beta=1/2$ and $\alpha= 2^{-(d-1)}$. Hence $\theta= -2^{-d}$.
	 For $d=2$, 
	Equation~\eqref{eq:bgraphon} yields graphon convergence of the head structure, and of the components. By Theorem~\ref{te:main1}, it follows that
	\[
		\mO_{n,2} \convd W^{(2)} := W_{\mathrm{PD}}(2^{-1}, -2^{-2}, L_{\mathrm{Brownian}}, L_{\mathrm{Brownian}}).
	\]
	By Induction on $d$ and Theorem~\ref{te:main1}, we obtain for all $d \ge 2$
	\[
		\mO_{n,d} \convd W^{(d)} := W_{\mathrm{PD}}(2^{-(d-1)}, -2^{-d}, L_{\mathrm{Brownian}}, L_{d-1}).
	\]
	This completes the proof.
\end{proof}

To keep the notation consistent, we set $W^{(1)} = W_{\mathrm{Brownian}}$. The limit object in Theorem~\ref{te:main2} admits an alternative parametrization:

\begin{proposition}
	\label{pro:char}
	As random graphons,
	\[
	W^{(d)}   \eqdist W_{\mathrm{PD}}(1/2, -2^{-d},  L_{d-1}, L_{\mathrm{Brownian}}).
	\]
\end{proposition}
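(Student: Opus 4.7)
The plan is to prove the identity by induction on $d$, combining Pitman's coagulation-fragmentation duality~\eqref{eq:duality} with the finite-dimensional sampling description from Lemma~\ref{le:finallanif}. By the discussion following Lemma~\ref{le:mega}, equality in distribution of random graphons is equivalent to equi-distribution of the induced $G(k, \cdot)$ for every $k \ge 1$, so it suffices to compare finite graph samples on both sides. The base case $d = 2$ is trivial since both sides coincide with $W_{\mathrm{PD}}(1/2, -1/4, L_1, L_1)$.

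For the inductive step, assuming the proposition for all $2 \le d' < d$, I would establish the following ``one-step swap'' for each $1 \le j \le d-2$:
\[
	W_{\mathrm{PD}}(2^{-(d-j)}, -2^{-d}, L_j, L_{d-j}) \eqdist W_{\mathrm{PD}}(2^{-(d-j-1)}, -2^{-d}, L_{j+1}, L_{d-j-1}).
\]
Iterating this from $j = 1$, where the left side is $W^{(d)}$, to $j = d-2$, where the right side is $W_{\mathrm{PD}}(1/2, -2^{-d}, L_{d-1}, L_1)$, yields the claim.

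To verify the one-step swap, I would expand $L_{d-j}$ via its defining form $W_{d-j} = W_{\mathrm{PD}}(2^{-(d-j-1)}, -2^{-(d-j)}, L_1, L_{d-j-1})$ and apply Lemma~\ref{le:finallanif} twice. This realises $G(k, \cdot)$ on the left via a two-level partition of $[k]$: an outer CRP$(2^{-(d-j)}, -2^{-d})$ partition $\pi$, refined within each block by an independent CRP$(2^{-(d-j-1)}, -2^{-(d-j)})$ partition $\pi'$, carrying a $W_{d-j-1}$-sample inside each $\pi'$-block, an independent Brownian-graphon sample on each $\pi$-block governing between-$\pi'$-block adjacencies, and an independent $W^{(j)}$-sample governing between-$\pi$-block adjacencies. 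The pair $(\pi, \pi')$ is precisely a $\mathrm{FRAG}_k$ configuration with $\alpha_1 = 2^{-(d-j-1)}$, $\alpha_2 = 1/2$, $\theta = -2^{-d}$ (the constraints $0 < \alpha_i < 1$ and $\theta > -\alpha_1 \alpha_2$ are immediate for $j \ge 1$); Pitman's duality~\eqref{eq:duality} replaces it by the equi-distributed $\mathrm{COAG}_k$ pair $(\tilde\pi, \tilde\pi')$ in which $\tilde\pi'$ is a CRP$(2^{-(d-j-1)}, -2^{-d})$ on $[k]$ and $\tilde\pi$ groups its blocks by an independent CRP$(1/2, -2^{-(j+1)})$, using $\theta / \alpha_1 = -2^{-(j+1)}$.

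Re-parsing the swapped partitions through Lemma~\ref{le:finallanif} in the reverse direction, the ``head graph'' on the $\tilde\pi'$-blocks now consists of a CRP$(1/2, -2^{-(j+1)})$-partition with Brownian adjacencies within each class and $W^{(j)}$-adjacencies between classes, which by Lemma~\ref{le:finallanif} is precisely the sampling law of $W_{\mathrm{PD}}(1/2, -2^{-(j+1)}, L_j, L_1)$. The inductive hypothesis at $d' = j+1 < d$ identifies this with $W^{(j+1)}$, so its law is $L_{j+1}$. A final application of Lemma~\ref{le:finallanif} recognises the full structure on $[k]$ as the sampling of $W_{\mathrm{PD}}(2^{-(d-j-1)}, -2^{-d}, L_{j+1}, L_{d-j-1})$, closing the swap. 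The main obstacle is the bookkeeping: one must verify that the Brownian-graphon samples originally attached to each $\pi$-block remain correctly attached to the corresponding $\tilde\pi$-blocks after the duality swap, so that the joint law of partitions and adjacencies is preserved, and that the inductive hypothesis is invoked at exactly the right moment to collapse the intermediate two-level head structure back into a single $W^{(j+1)}$.
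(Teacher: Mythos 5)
Your proposal is correct, and it rests on the same two pillars as the paper's argument — the sampling description of Lemma~\ref{le:finallanif} and Pitman's duality~\eqref{eq:duality}, wrapped in an induction on $d$ — but it organizes the induction differently. The paper performs a \emph{single} duality application per inductive step: it expands the component $W^{(d-1)}$ using the induction hypothesis (so the inner squares already carry Brownian graphons, with fine partition $\mathrm{CRP}(1/2,-2^{-(d-1)})$), applies~\eqref{eq:duality} with $\alpha_1=1/2$, $\alpha_2=2^{-(d-2)}$, $\theta=-2^{-d}$, and then recollapses the coarse grouping into $W^{(d-1)}$ via its \emph{original} definition. You instead expand components only via their defining parametrizations and perform a chain of $d-2$ one-step swaps (with $\alpha_1=2^{-(d-j-1)}$, $\alpha_2=1/2$ at step $j$), using the (strong) induction hypothesis at dimension $j+1$ to collapse the intermediate two-level head into $W^{(j+1)}$; your parameter bookkeeping ($\theta/\alpha_1=-2^{-(j+1)}$, the constraint $\theta>-\alpha_1\alpha_2$ for $j\le d-2$, and the endpoint identifications at $j=1$ and $j=d-2$) all checks out. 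What your route buys is that it stays entirely at the level of finite samples $G(k,\cdot)$, which is exactly the form in which~\eqref{eq:duality} is stated, and it never needs the alternative parametrization as an input to the expansion; the cost is $d-2$ duality applications per case instead of one, and strong induction. On the one point you flag as the main obstacle: the right way to dispose of it is not a pathwise re-attachment of Brownian samples to $\tilde\pi$-blocks (the duality gives no coupling), but the observation that on both sides the decorations — within-fine-block graphs, per-coarse-block Brownian graphs, and the between-coarse-block graph — are attached, conditionally on the partition-of-partitions, by the same exchangeable rule depending only on block sizes and the nesting; hence equality in law of the partition pairs from~\eqref{eq:duality} transfers to the decorated structures, and your argument closes. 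This is the same level of justification the paper itself supplies for the analogous step.
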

\begin{proof}
	We proceed by induction on $d$. The base case $d=2$ is trivial by construction, as both sides of the distributional equality refer to the same object. As for the induction step, suppose that $d \ge 3$ and that
	\begin{align}
		\label{eq:ihyp}
				W^{(d-1)}   
				&\eqdist W_{\mathrm{PD}}(1/2, -2^{-(d-1)},  L_{d-2}, L_{\mathrm{Brownian}}). 
	\end{align}
	
	By Definition~\ref{def:wg},  $W^{(d)}$ is constructed from squares on the diagonal with side lengths following a $\mathrm{PD}(2^{-(d-1)}, -2^{-(d)})$ distribution. The squares on the diagonal are filled with independent copies of $W^{(d-1)}$, rescaled according to the side-lengths of the squares. The adjacency between these diagonal squares (more precisely, between the corresponding intervals) is determined by an independent copy of $G(\infty, W_{\mathrm{Brownian}})$.

	 By our induction hypothesis~\eqref{eq:ihyp}, $W^{(d-1)}$ may be realised via diagonal squares with side lengths following a $\mathrm{PD}(1/2, -2^{-(d-1)})$ distribution, filled with rescaled independent copies of $W_{\mathrm{Brownian}}$, and the adjacency between these squares is determined by an independent copy of $G(\infty, W^{(d-2)})$.
	
	Hence, in $W^{(d)}$ we have ``first level'' diagonal squares with side-length given by the ranked points $(X_i)_{i \ge 1}$ of a  $\mathrm{PD}(2^{-(d-1)}, -2^{-d})$ process, and for each $i \ge 1$ we cover the diagonal of the $i$th square by ``second level'' diagonal squares by sampling an independent copy $(X_{i,j})_{j \ge 1}$ of the ranked points of a $\mathrm{PD}(1/2, -2^{-(d-1)})$ process taking side-lengths equal to $(X_i X_{i,j})_{j \ge 1}$. 
	
	Applying Pitman's coagulation-fragmentation duality~\eqref{eq:duality} for  $\alpha_1 = 1/2$, $\alpha_2 = 2^{-(d-2)}$, $\theta= -2^{-d}$  yields that, up to re-ordering, $(X_i X_{i,j})_{i, j \ge 1}$ is distributed like the points of a $\mathrm{PD}(1/2, -2^{-d})$ process. Furthermore, the adjacency between these second level diagonals may be determined by grouping them according to a Chinese restaurant process with a $(2^{-(d-2)}, -2^{-(d-1)})$ seating plan and determining the adjacency between members of the same table by an independent copy of $G(\infty, W^{(d-2)})$  and the adjacency between tables by a single independent copy of $G(\infty, W_{\mathrm{Brownian}})$. By Lemma~\ref{le:finallanif}, this is precisely the manner of determining adjacency between points in $G(\infty, W^{(d-1)})$, therefore completing the proof.
\end{proof}

 \subsection{Weighted super-cographs}

Given a fixed constant $q>0$ we may consider two-dimensional super-cographs where each component receives weight $q$. That is, we consider the class $\cO^{\langle q \rangle} = \cH \circ \cC$ where $\cH = \cO$ with the trivial weighting that assigns weight $1$ to each graph, and $\cC = \cO$ with the weighting that assigns weight $q$ to each graph. The weight of a graph from $\cO^{\langle q \rangle}$ is hence $q$ raised to  the number of components, or equivalently, the size of the head-structure. Thus
\[
	\cO^{\langle q \rangle}(z) = \cO(q\cO(z)).
\]
We let $\mO^{\langle q \rangle}_n$ be drawn from $\cO^{\langle q \rangle}_n$ with probability proportional to its weight.

\begin{corollary}
	As $n \to \infty$,
	\[
		\mO^{\langle q \rangle}_n \convd \begin{cases}
			W_{\mathrm{Brownian}}, &q \neq 2 \log(2)-1 \\
			W^{(2)}, &q = 2 \log(2)-1.
		\end{cases}
	\]
	as random graphons.
\end{corollary}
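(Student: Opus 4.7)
The plan is to split according to how $q$ compares to the critical value $\rho_\cO = 2\log(2)-1$ and invoke the appropriate regime theorem from Section~\ref{sec:supergraphs}. I begin by recording the standing identifications: $\cH = \cO$ with all graphs of weight one and $\cC = \cO$ with every graph of weight~$q$, so that $\cC(z) = q\cO(z)$, both series have radius of convergence $\rho_\cO$, and $\cC(\rho_\cC) = q\cO(\rho_\cO) = q$ by~\eqref{eq:dobrasil}. Consequently $\rho_\cH = \cC(\rho_\cC)$ holds precisely when $q = \rho_\cO$, which identifies the phase transition. Moreover, conditionally on their sizes, both $\mH_n$ and $\mC_n$ are uniform random cographs (since all cographs of a given size carry the same weight), so~\eqref{eq:bgraphon} yields $\mH_n \convd W_{\mathrm{Brownian}}$ and $\mC_n \convd W_{\mathrm{Brownian}}$.

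For $q > \rho_\cO$ we have $\rho_\cH < \cC(\rho_\cC)$, and~\eqref{eq:ipar} gives $h_n \sim C\, n^{-3/2}\rho_\cH^{-n}$ with exponent $b = 3/2 > 1$. Case~(ii) of the dense regime preceding Theorem~\ref{te:dense} then applies, the gcd condition being immediate since $c_n > 0$ for every $n \ge 1$, and thus $\mO^{\langle q \rangle}_n \convd W_{\mathrm{Brownian}}$. For $0 < q < \rho_\cO$ we have $\rho_\cH > \cC(\rho_\cC) > 0$ and land in the convergent case~(ii) of the condensation regime preceding Theorem~\ref{te:condensation}: $\cH'(q) = \cO'(q)$ is finite because $q$ lies strictly inside the disk of convergence of $\cO$ and positive because $\cO$ has strictly positive coefficients, while the two asymptotic limits on $(c_n)$ hold automatically by the parenthetical remark since $c_n \sim q\sqrt{\rho_\cO/\pi}\, n^{-3/2}\rho_\cO^{-n}$. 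Theorem~\ref{te:condensation} again yields $\mO^{\langle q \rangle}_n \convd W_{\mathrm{Brownian}}$.

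The critical case $q = \rho_\cO$ is the ``spike'', and here I verify the hypotheses of Theorem~\ref{te:main1}. Criticality~\eqref{eq:graphcrit} is immediate from the computation above. From~\eqref{eq:ipar} one reads off $c_n \rho_\cC^n \sim q\sqrt{\rho_\cO/\pi}\, n^{-3/2}$, whence Karamata's theorem gives the tail asymptotic $\sum_{k > n} c_k \rho_\cC^k \sim 2q\sqrt{\rho_\cO/\pi}\, n^{-1/2}$; this is~\eqref{eq:graphcondw} with $\alpha = 1/2$ and constant slowly varying function, and the accompanying pointwise bound is automatic. Likewise $h_n \rho_\cH^n \sim \sqrt{\rho_\cO/\pi}\, n^{-3/2}$ verifies~\eqref{eq:graphcondv} with $\beta = 1/2$, and the graphon convergences~\eqref{eq:compconv} hold with $L_\cH = L_\cC = L_1$ by~\eqref{eq:bgraphon}. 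Applying Theorem~\ref{te:main1} with $\theta = -\alpha\beta = -1/4$, and recognising from Theorem~\ref{te:main2} that $W^{(2)} = W_{\mathrm{PD}}(1/2, -1/4, L_1, L_1)$, delivers the claimed spike.

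The proof is essentially bookkeeping: the three regimes exhaust all positive $q$, and the exponents $\alpha = \beta = 1/2$ at criticality are forced by the universal $n^{-3/2}$ asymptotic for $[z^n]\cO(z)$. The only real subtlety, which doubles as the main obstacle, is to confirm that none of the other condensation subcases (i), (iii)--(iv) nor the mixture regime can also be invoked at $q = \rho_\cO$—each of those requires either an exponent larger than $3/2$ on $n$ or integrability hypotheses incompatible with $h_n \rho_\cH^n \sim C n^{-3/2}$—so that the dilute regime is indeed the unique applicable one and the ``spike'' is genuinely isolated.
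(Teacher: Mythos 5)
Your proof is correct and follows essentially the same route as the paper: verify the dilute-regime hypotheses at $q=2\log(2)-1$ via~\eqref{eq:dobrasil} and~\eqref{eq:ipar} and apply Theorem~\ref{te:main1}, and invoke the dense regime (case ii) for $q>2\log(2)-1$ and the convergent condensation case (ii) for $q<2\log(2)-1$, exactly as in the paper's proof; your version just spells out the bookkeeping more explicitly. (The closing worry about excluding the other subcases is unnecessary—one only needs the hypotheses of the theorem actually invoked—but it does no harm.)
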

\begin{proof}
	By Lemma~\ref{le:proasym} applied to $d=1$, we are in the dilute regime for $q = 2 \log(2)-1$, and in this case
	\[
		\mO^{\langle q \rangle}_n \convd W^{(2)} = W_{\mathrm{PD}}(1/2, -1/4, L_{\mathrm{Brownian}}, L_{\mathrm{Brownian}})
	\]
	by Theorem~\ref{te:main1}. For $0<q<2 \log(2)-1$ we are the  condensation regime (specifically the convergent case) and 
	\[
		\mO^{\langle q \rangle}_n \convd W_{\mathrm{Brownian}}
	\]
	by Theorem~\ref{te:condensation}. For $q> 2 \log(2)-1$ we are in the dense regime and 
	\[
	\mO^{\langle q \rangle}_n \convd W_{\mathrm{Brownian}}
	\]
	by Theorem~\ref{te:dense}.
\end{proof}

\section{Background on permutation limits}
\label{sec:permlim}

\subsection{Pattern densities of permutations}
We recall background on permutation limits following~\cite{zbMATH06126921}. For each $n \ge 1$ we let $\cS_n$ denote the symmetric group of degree $n$, and set $\cS= \bigcup_{n \ge 1} \cS_n$.
Given a permutation $\sigma \in \cS_n$ and a subset $I \subset [n]$ with $k := |I|$ we let $\varphi_I: I \to [k]$ denote the unique order-preserving map from $I$ to $[k]$. This allows us to  define the \emph{pattern} 
\[
\pat_I(\sigma) = \varphi_{\sigma(I)} \sigma \varphi_I^{-1} \in \cS_k.
\]
We let  $I_{n,k} \subset [n]$ denote a uniformly selected $k$-element subset of $[n]$. For each $\nu \in \cS$, we define the \emph{pattern density}
\[
	\mathrm{occ}(\nu, \sigma) = \Prb{ \pat_{I_{n,k}}(\sigma) = \nu}.
\]

\subsection{Permutons}

A \emph{permuton} $\mu$ is a Borel probability measure on the unit square $[0,1]^2$ with uniform marginals. That is, for all $0 \le a \le b\le 1$
\[
\mu( [0,1] \times [a,b] ) = \mu( [a,b] \times [0,1] ) = b-a.
\]

Let $(x_i, y_i)_{i \ge 1}$ denote i.i.d. samples of $\mu$. Then for each $n \ge 1$, almost surely $x_1, \ldots, x_n$ are pairwise distinct, and $y_1, \ldots, y_n$ are pairwise distinct. Let $\varphi: \{y_1, \ldots, y_n\} \to [n]$ denote the unique order preserving bijection and let $(1), \ldots, (n) \in [n]$ such that $x_{(1)} < \ldots < x_{(n)}$. We let $\mathrm{Perm}(n,\mu) \in \cS_n$ denote the permutation that maps $i \in [n]$ to $\varphi(y_{(i)}) \in [n]$.

A family $(\nu_n)_{n \ge 1}$ of random permutations $\nu_n \in \cS_n$ is called \emph{consistent}, if for all $1 \le k \le n$  we have $\pat_{I_{n,k}}( \nu_n) \eqdist \nu_k$. 

\begin{proposition}[{\cite[Prop. 2.9]{zbMATH07286839}}]
	The following statements hold:
	\begin{enumerate}
		\item 
	If $\mu$ is a random permuton, then $(\mathrm{Perm}(n,\mu))_{n \ge 1}$ is consistent. 
	\item If $(\nu_n)_{n \ge 1}$ is consistent, then there exists a random permuton $\mu$ with $\nu_n \eqdist \mathrm{Perm}(n,\mu)$ for all $n \ge 1$. $\mu$ is unique in the sense that any random permuton $\mu'$ with the same property must satisfy $\mu \eqdist \mu'$.
	\end{enumerate}
\end{proposition}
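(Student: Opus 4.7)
My plan is to handle the two parts separately, splitting (2) into uniqueness and existence. Part (1) is essentially a restatement of exchangeability. Let $(x_i,y_i)_{i \ge 1}$ denote the samples that are, conditionally on $\mu$, i.i.d.\ with law $\mu$. By construction $\mathrm{Perm}(n,\mu)$ reads off the $y$-ranks of these $n$ points in increasing order of $x$-rank. For any subset $J \subset [n]$ of size $k$, the pattern $\pat_J(\mathrm{Perm}(n,\mu))$ is obtained from the sub-sample $(x_i,y_i)_{i \in J}$ by the very same rank procedure. Exchangeability of the samples then gives that this sub-sample permutation has the same law as the one built from $(x_1,y_1),\dots,(x_k,y_k)$, which is $\mathrm{Perm}(k,\mu)$. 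Taking $J = I_{n,k}$ uniformly and independently preserves this equality of laws, so consistency holds.

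For part (2) I would first prove uniqueness via pattern densities. The space of permutons is compact metric under weak convergence of measures, the functions $\mu \mapsto \mathrm{occ}(\pi,\mu)$ for $\pi \in \cS$ are continuous, and together they separate points; Stone--Weierstrass then makes polynomials in these densities dense in the space of continuous functions on it. Any joint moment $\Exb{\prod_{j=1}^m \mathrm{occ}(\pi_j,\mu)}$ is already encoded in the marginal law $\mathfrak{L}(\mathrm{Perm}(n,\mu))$ for any sufficiently large $n$: with $k_j = |\pi_j|$ and any fixed pairwise disjoint blocks $J_1,\dots,J_m \subset [n]$ of sizes $k_j$, conditional independence of the corresponding sub-samples given $\mu$ gives
\[
\Exb{\prod_{j=1}^m \mathrm{occ}(\pi_j,\mu)} = \Prb{\pat_{J_j}(\mathrm{Perm}(n,\mu)) = \pi_j \text{ for } j=1,\dots,m}.
\]
Hence two random permutons $\mu,\mu'$ with $\mathrm{Perm}(n,\mu) \eqdist \mathrm{Perm}(n,\mu')$ for every $n$ have identical joint moments in pattern densities and therefore the same law.

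For existence, I would associate to each $\nu_n$ its empirical permuton $\mu_n$, the probability measure on $[0,1]^2$ uniform on $\bigcup_{i=1}^n [(i-1)/n,i/n] \times [(\nu_n(i)-1)/n,\nu_n(i)/n]$. These are random elements of the compact space of permutons, so the family is automatically tight and some subsequence satisfies $\mu_{n_j} \convd \mu$ for a random permuton $\mu$. The map $\nu \mapsto \mathfrak{L}(\mathrm{Perm}(k,\nu))$ is continuous on this space, and for each $n \ge k$ one has $\mathrm{Perm}(k,\mu_n) \eqdist \pat_{I_{n,k}}(\nu_n) \eqdist \nu_k$ up to an error of probability $O(k^2/n)$ coming from the event that two of the $k$ samples from $\mu_n$ fall into the same column or row strip. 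Passing to the limit along the subsequence gives $\mathrm{Perm}(k,\mu) \eqdist \nu_k$ for every $k$, and the uniqueness step removes the subsequence. The main obstacle, delicate rather than deep, is to match laws exactly in the approximation $\mathrm{Perm}(k,\mu_n) \eqdist \pat_{I_{n,k}}(\nu_n)$: one must couple $k$ i.i.d.\ samples from the atomic measure $\mu_n$ with the uniform index set $I_{n,k}$ and absorb the $O(k^2/n)$ repeat-sample contribution via a small-perturbation argument that does not disturb the weak convergence $\mu_{n_j} \convd \mu$.
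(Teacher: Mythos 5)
First, note that the paper does not prove this proposition at all: it is quoted verbatim from the cited reference ({[Prop.\ 2.9]} of the permuton literature), so there is no in-paper argument to compare against. Your overall strategy (sampling plus exchangeability for (1); compactness of the permuton space, continuity of pattern densities, and a Stone--Weierstrass moment argument for uniqueness; empirical permutons and a subsequential limit for existence) is the standard route used in the literature, and the existence half is essentially correct modulo routine details (the $O(k^2/n)$ coupling and the continuity of $\mu\mapsto\mathfrak{L}(\mathrm{Perm}(k,\mu))$, which itself needs the no-ties argument coming from uniform marginals).

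There is, however, a genuine error at the heart of your uniqueness step: you conflate \emph{sample indices} with \emph{positions} of the permutation. For a fixed set $J\subset[n]$ of positions, $\pat_J(\mathrm{Perm}(n,\mu))$ is the pattern of the points whose $x$-\emph{ranks} lie in $J$, not of the sub-sample $(x_i,y_i)_{i\in J}$; which points occupy those positions depends on the whole sample, so the conditional independence you invoke for fixed disjoint blocks $J_1,\dots,J_m$ fails, and the displayed identity
\[
\Exb{\textstyle\prod_{j=1}^m \mathrm{occ}(\pi_j,\mu)} = \Prb{\pat_{J_j}(\mathrm{Perm}(n,\mu)) = \pi_j \text{ for } j=1,\dots,m}
\]
is false. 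Concretely, let $\mu$ be the (deterministic) permuton with density $2$ on $[0,1/2]^2\cup[1/2,1]^2$, $n=4$, $\pi_1=\pi_2=21$, $J_1=\{1,2\}$, $J_2=\{3,4\}$: then $\mathrm{occ}(21,\mu)=1/4$, so the left side is $1/16$, while $\Prb{\pat_{J_1}=21}=3/8$ and the joint probability on the right equals $1/8$. The same example shows that even the single-block claim $\pat_J(\mathrm{Perm}(n,\mu))\eqdist\mathrm{Perm}(k,\mu)$ for a \emph{fixed} $J$ is wrong; your part (1) survives only because consistency is stated for the uniformly random $I_{n,k}$, whose preimage under the points-to-positions bijection is a uniform sub-collection of the exchangeable sample --- that is the argument you should make explicit.

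The uniqueness step is repairable along the same lines: either take the blocks $(J_1,\dots,J_m)$ to be a uniformly random ordered partition of $[K]$, $K=\sum_j k_j$, into blocks of sizes $k_1,\dots,k_m$ (then, pulling back to sample indices, conditional independence given $\mu$ is genuine and the identity holds with an expectation over the random blocks, which is still a functional of $\mathfrak{L}(\mathrm{Perm}(K,\mu))$), or obtain the joint moments as limits of $\Exb{\prod_j \mathrm{occ}(\pi_j,\mathrm{Perm}(n,\mu))}$ via the law of large numbers for sampled patterns. You should also flag that the two analytic inputs you assert without proof --- continuity of $\mu\mapsto\mathrm{occ}(\pi,\mu)$ on the permuton space and the fact that pattern densities separate permutons --- are standard but not trivial; the latter is exactly the determinacy statement that makes the Stone--Weierstrass argument bite.
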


\subsection{Permutation limits}
\label{sec:plim}

We may view a permutation $\sigma \in \cS_n$ as  a permuton 
\[
\mu_\sigma = n \sum_{i=1}^n \mathrm{Lebesgue}( ([(i-1)/n, i/n]\times[(\sigma(i)-1)/n,\sigma(i)/n]) \cap \cdot)
\]
with $\mathrm{Lebesgue}(B \cap \cdot)$ denoting the Lebesgue measure restricted to a measurable subset $B \subset \ndR^2$. A random permutation $\sigma_n$ converges weakly to a random permuton $\mu$ as $n \to \infty$ if 
\[
	\mu_{\sigma_n} \convd \mu
\]
as random probability measures. There are different characterisations for this form of convergence: 
\begin{proposition}[{\cite[Thm. 2.5]{zbMATH07286839}}]
	\label{pro:doit}
For each $n \ge 1$ let $\sigma_n \in \cS_n$ denote a random permutation. The following statements are equivalent.
\begin{enumerate}[i)]
	\item There exists a permuton $\mu$ such that $\mu_{\sigma_n} \convd \mu$.
	\item For any integer $k \ge 1$ the pattern $\pat_{{I}_{n,k}}(\sigma_n)$ admits a distributional limit as random element of the finite set $\cS_k$.
	\item For each $n \ge 1$ and $\nu \in \cS$,  $\Exb{\mathrm{occ}(\nu, \sigma_n)}$ converges.
	\item $(\mathrm{occ}(\nu, \sigma_n))_{\nu \in \cS}$ converges in distribution with respect to the product topology.
\end{enumerate}
\end{proposition}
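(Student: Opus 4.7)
The plan is to establish all four equivalences by working in the space of permutons and exploiting that pattern densities separate permutons. First, I would verify the identity $\Exb{\mathrm{occ}(\nu, \sigma_n)} = \Prb{\pat_{I_{n,k}}(\sigma_n) = \nu}$ for each $\nu \in \cS_k$, where the probability is taken jointly over $I_{n,k}$ and $\sigma_n$; this follows from the tower property and the very definition of $\mathrm{occ}$ as a conditional probability given $\sigma_n$. Since $\cS_k$ is finite, convergence in distribution of $\pat_{I_{n,k}}(\sigma_n)$ is equivalent to pointwise convergence of the probabilities $\Prb{\pat_{I_{n,k}}(\sigma_n) = \nu}$, so combining these two observations yields the equivalence (ii) $\Leftrightarrow$ (iii) essentially for free.

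For (i) $\Rightarrow$ (iv) I would use the continuous mapping theorem. For a deterministic $\sigma \in \cS_n$, a direct counting argument shows $\mathrm{occ}(\nu, \sigma) = \Prb{\mathrm{Perm}(k, \mu_\sigma) = \nu} + O(k^2/n)$, where the error term accounts for the vanishing birthday-paradox probability that two of $k$ i.i.d.\ samples from $\mu_\sigma$ land in the same cell $[(i-1)/n, i/n]^2$. Moreover, for each fixed $\nu$ of size $k$, the map sending a permuton $\mu$ to $\Prb{\mathrm{Perm}(k, \mu) = \nu}$ is continuous on the (compact) space of permutons equipped with the weak topology, since the event in question is a $\mu^{\otimes k}$-continuity set. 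Joint convergence in distribution of the pattern-density vector $(\mathrm{occ}(\nu, \sigma_n))_{\nu \in \cS}$ then follows, giving (iv). The direction (iv) $\Rightarrow$ (iii) is bounded convergence applied to each coordinate.

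The substantive implication is (iii) $\Rightarrow$ (i), which I would prove in two steps. The space of permutons is compact metrizable in the weak topology (it is a closed subset of the probability measures on $[0,1]^2$), hence the random permutons $\mu_{\sigma_n}$ are automatically tight as random elements of this space and admit subsequential weak limits. For any such limit $\mu$, the continuous mapping theorem combined with bounded convergence yields $\Exb{\Prb{\mathrm{Perm}(k, \mu) = \nu \mid \mu}} = \lim_n \Exb{\mathrm{occ}(\nu, \sigma_n)}$, and by (iii) this value is the same along every subsequence. A moments-method argument then shows that the law of $\mu$ is uniquely determined by this family of expectations: products of pattern densities in $\mu$ can themselves be written as averages of pattern densities of strictly larger patterns (by sampling more points from $\mu$ and conditioning on which sub-pattern each subset yields), and the resulting algebra of functions separates points on the compact permuton space, so a Stone--Weierstrass plus monotone-class argument extends this to the Borel $\sigma$-algebra. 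Uniqueness of the subsequential limit then gives (i).

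The main obstacle is precisely this uniqueness-of-moments step --- showing that the joint law of the vector $(\Prb{\mathrm{Perm}(k, \mu) = \nu \mid \mu})_{\nu \in \cS}$ determines the distribution of the random permuton $\mu$. It requires both a combinatorial step (rewriting products of pattern densities as linear combinations of densities of larger patterns, up to boundary terms that vanish) and a measure-theoretic extension from polynomial functionals to the Borel $\sigma$-algebra. Once this is granted, the implication (iii) $\Rightarrow$ (iv) also follows, since joint convergence in distribution of the bounded vector $(\mathrm{occ}(\nu, \sigma_n))_{\nu}$ reduces to convergence of all its moments, each of which is in turn (by the same combinatorial identity and (iii)) a convergent sequence of expectations of single pattern densities.
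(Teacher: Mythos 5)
Your proposal is correct, and since the paper does not prove this proposition at all (it is quoted verbatim from [Thm.~2.5] of the cited reference), the right comparison is with the standard literature proof, which your argument essentially reproduces: the $O(k^2/n)$ comparison between $\mathrm{occ}(\nu,\sigma)$ and $\Prb{\mathrm{Perm}(k,\mu_\sigma)=\nu}$, continuity of $\mu\mapsto\Prb{\mathrm{Perm}(k,\mu)=\nu}$ via the fact that ties have $\mu^{\otimes k}$-measure zero for measures with uniform marginals, compactness of the permuton space for tightness, and the method of moments based on expanding products of pattern densities into densities of larger patterns. The one ingredient you assert rather than prove --- that the pattern-density functionals separate points of the permuton space --- is a standard fact, and within this paper you could avoid proving it by invoking the uniqueness clause of the sampling proposition quoted in Section~\ref{sec:permlim} (Prop.~2.9 of the same reference): any subsequential limit $\mu$ has $\Prb{\mathrm{Perm}(k,\mu)=\nu}$ equal to the limit in (iii), so all subsequential limits coincide in law, which shortens your (iii)$\Rightarrow$(i) step and closes the cycle (ii)$\Leftrightarrow$(iii), (i)$\Rightarrow$(iv)$\Rightarrow$(iii)$\Rightarrow$(i) exactly as you set it up.
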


\section{Convergence of superpermutations}

\label{sec:superperm}

\subsection{Superpermutations}

We may represent any permutation $\theta \in \cS_\ell$ by a diagram of $\ell$ unit squares $[i-1, i] \times [\theta(i)-1, \theta(i)]$, $i=1, \ldots, \ell$. Let $Q$ denote one of these squares. We may perform a \emph{blow-up} of $Q$ by replacing it with the diagram of another permutation $\nu \in \cS_k$, which shifts the squares above $Q$ further above and the squares to the right of $Q$ further to the right to make room. This results in a diagram that represents a permutation of size $(\ell - 1) + k$.

We may also perform this blow-up for all squares at the same time. Given permutations $\nu_i \in \cS_{k_i}$, $1 \le i \le \ell$, $k_1, \ldots, k_\ell \ge 1$,  we may form a new permutation $\theta[\nu_1, \ldots, \nu_\ell] \in \cS_n$ for $n=k_1 +\ldots + k_\ell$ via the \emph{substitution operation}, by blowing up the $i$th square of $\theta$ with the diagram of $\nu_i$ for all $1 \le i \le \ell$. 

Analogously as for supergraphs, it will be convenient to refer to $\theta$ as the \emph{head structure} and $\nu_1, \ldots, \nu_\ell$ as the \emph{components} of the substitution.

It is easy to see that with $\bullet \in \cS_1$ the unique permutation of size $1$, we may write $\theta = \theta[\bullet, \ldots, \bullet] = \bullet[\theta]$. Thus, any permutation of size at least $2$ may be represented in at least two ways as a substitution. There may be even more ways, for example for the identity permutation of size at least $3$.

We define a \emph{(two-dimensional) superpermutation} as a permutation obtained via the substitution operation which keeps the information on the head structure and components. This way, we may refer to the head and components of a superpermutation. 

If we restrict the head-structure to belong to some subset $\cH \subset \cS$ and the components to some subset $\cC \subset \cS$, we thus form the class $\cG= \cH \circ \cC$ of superpermutations with these constraints. We deliberately use the same notation as for supergraphs to highlight the analogy.

Given \emph{weight-functions} $\omega_\cH: \cH \to \ndR_{\ge 0}$ and $\omega_\cC: \cC \to \ndR_{\ge 0}$, we may define the $\omega_\cG$ weight of a superpermutation from $\cG$ to be the product of the weights of its head structure and components. We let $\sigma^\cG_n$, $\sigma^\cH_n$ and $\sigma^\cC_n$ denote permutations of size $n$ drawn from the respective classes $\cG$, $\cH$ and $\cC$ with probability proportional to their $\omega_\cG$, $\omega_\cH$ and $\omega_\cC$ weight. Of course, this is only well-defined when at least one permutation of that size with positive weight exists.

We set
\[
	c_n = \sum_{\nu \in \cS_n \cap \cC} \omega_{\cC}(\nu) \qquad \text{and} \qquad h_n = \sum_{\nu \in \cS_n \cap \cH} \omega_{\cH}(\nu).
\]
Furthermore, we set $\cH(z) = \sum_{n \ge 1} h_n z^n$ and $\cC(z) = \sum_{n \ge 1} c_n z^n$.

The classes $\cH$ and $\cC$ may also be allowed to be classes of superpermutations, therefore giving rise to superpermutations with a higher \emph{dimension}. Whenever we don't explicitly specify a weight-function we refer to the weights that assign weight $1$ to each element of the class.

\subsection{Poisson-Dirichlet permutons in a dilute regime}

We introduce a class of random permutons governed by a two-parameter Poisson-Dirichlet process and establish a dilute regime in which these occur as universal limiting objects of random superpermutations.

Throughout this subsection we make the following assumptions:
\begin{align}
	\label{eq:pgraphcrit}
	\rho_\cC >0 \qquad \text{and} \qquad \rho_\cH= \cC(\rho_\cC)>0.
\end{align}
We assume that there exist constants 
\begin{align}
	\label{eq:pabet}
	0<\alpha<1, \qquad -\infty < \beta < 1
\end{align} and slowly varying functions $\ell_\cH, \ell_\cC$ such that
\begin{align}
	\label{eq:pgraphcondw}
	\sum_{k > n} c_k  \rho_\cC^k \sim \ell_\cC(n) n^{-\alpha} \qquad \text{and} \qquad c_n \rho_\cC^n = O( \ell_\cC(n) n^{-\alpha-1} )
\end{align}
and
\begin{align}
	\label{eq:pgraphcondv}
	h_n \rho_\cH^n \sim \ell_\cH(n) n^{-\beta-1}.
\end{align}
Suppose that there exist random permutons $\mu_\cH$ and $\mu_\cC$ such that  
\begin{align}
	\label{eq:pcompconv}
	\sigma_n^\cH \convd \mu_{\cH} \qquad \text{and} \qquad \sigma_n^\cC \convd \mu_{\cC}
\end{align}
as random permutons. See Section~\ref{sec:plim} for this notion of convergence.

We define a  random permuton $\mu_{\cG}$ whose distribution depends on $\alpha$, $\beta$ (subject to~\eqref{eq:pabet}), and the laws $L_{\cH} = \mathfrak{L}(\mu_{\cH})$ and $L_{\cC} = \mathfrak{L}(\mu_{\cC})$. The construction makes use of the two-parameter Poisson-Dirichlet process with parameters $\alpha$ and
\begin{align}
	\theta := - \alpha \beta.
\end{align}
In order to stay consistent with parametrization of the Poisson-Dirichlet process, we use $\theta$ as parameter for the random permuton $\mu_{\cG}$ instead of $\beta$.
\begin{definition}[Poisson-Dirichlet permuton]
	\label{def:pdperm}
		We let
		\[
			\mu_{\cG} = \mu_{\mathrm{PD}}(\alpha, \theta, L_{\cH},  L_ {\cC} )
		\]	
		denote the random permuton defined as follows.
\begin{enumerate}
	\item Let $(V_{\ell})_{\ell \ge 1}$ denote the points of a $\mathrm{PD}(\alpha, \theta)$ process.
	\item Let $(\mu_i)_{i \ge 1}$ denote independent copies of the random permuton $\mu_{\cC}$.
	\item Generate independent samples $(X_i, Y_i)_{i \ge 1}$ of  $\mu_{\cH}$. (This  means, we generate an instance of the random permuton $\mu_{\cH}$ and use the same instance for generating the $(X_i, Y_i)$, $i=1, \ldots$.) Since $\mu_{\cH}$ has uniform marginals, we have almost surely $X_i \ne X_j$ and $Y_i \ne Y_j$ for all $i \ne j$.
	\item Define the random c\`adl\`ag functions
	\[
		F(x) = \sum_{i \ge 1} V_i \one_{X_i \le x} \qquad \text{and} \qquad G(y) = \sum_{i \ge 1} V_i \one_{Y_i \le y}
	\]
	with jumps $V_i = F(X_i) - F(X_i-) = G(Y_i) - G(Y_i -)$ at $X_i$ and $Y_i$. Define
	\[
		A_i: [0,1]^2 \to [0,1]^2, \quad (x,y) \mapsto (F(X_i-) + V_ix, G(Y_i-) + V_iy).
	\]
	This way, $A_i([0,1]^2) = [F(X_i-), F(X_i)] \times [G(Y_i-), G(Y_i)]$.
	Define
	\[
		\mu_{\cG} = \sum_{i \ge 1} V_i (\mu_i \circ A_i^{-1}),
	\]
	with $\mu_i \circ A_i^{-1}: B \mapsto \mu_i(A_i^{-1}(B))$ denoting the pushforward of $\mu_i$ along $A_i$, and $V_i (\mu_i \circ A_i^{-1})$ denoting the product of $V_i$ and $\mu_i \circ A_i^{-1}$.
\end{enumerate}
\end{definition}
\noindent To see that  $\mu_\cG$ is a random permuton, note first that $\mu_\cG$ is a linear combination of (random) finite measures on $[0,1]^2$ and that
\[
\mu_{\cG}([0,1]^2) = \sum_{i \ge 1} V_i \mu_i([0,1]^2) = \sum_{i \ge 1} V_i = 1.
\]
This verifies that $\mu_{\cG}$ is a random probability measure on $[0,1]^2$. To see that it has uniform marginals, write $A_i = (A_{i,1}, A_{i,2})$. Let $\lambda(\cdot)$ denote the one-dimensional Lebesgue measure. Since the $\mu_i$ have uniform marginals, we have for all $0 \le a < b \le 1$, $I=[a,b]$  that
\begin{align*}
	\mu_{\cG}([0,1] \times I) &= \sum_{i \ge 1} V_i \mu_i([0,1] \times A_{i,2}^{-1}(I)  ) \\
	&= \sum_{i \ge 1} V_i \lambda(A_{i,2}^{-1}(I)) \\
	&= \sum_{i \ge 1}  \lambda(I \cap [G(Y_i-), G(Y_i)]) \\
	&= \lambda(I),
\end{align*}
and analogously
\begin{align*}
	\mu_{\cG}(I \times [0,1]) = \lambda(I).
\end{align*}

\begin{lemma}
	\label{le:finpdperm}
	For each $k \ge 1$, the random permutation $\mathrm{Perm}(k, \mu_{\cG})$ is distributed like the outcome of the following procedure.
	\begin{enumerate}
		\item  Run an $(\alpha, \theta)$ Chinese restaurant process until step~$k$. This results in a number $r \ge 1$ of tables with $k_1, \ldots, k_r$ customers. Customers are represented by integers from $[k]$.
		\item Sample an independent copy $\nu$ of $\mathrm{Perm}(r, \mu_{\cH})$. 
		\item For each $1 \le i \le r$ sample an independent copy $\nu_i$ of $\mathrm{Perm}(k_i, \mu_{\cC})$. (That is, let $\mu_i$ denote an independent copy of $\mu_\cC$ and set $\nu_i$ to be the permutation induced from $k_i$ samples of $\mu_i$.)
		\item Return the substitution $\nu[\nu_1, \ldots, \nu_r]$.
	\end{enumerate}
\end{lemma}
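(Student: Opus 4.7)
The plan is to analyse how sampling $k$ i.i.d.\ points from $\mu_\cG$ factors through the underlying $\mathrm{PD}(\alpha,\theta)$ process. Since $\mu_\cG = \sum_{i \ge 1} V_i (\mu_i \circ A_i^{-1})$ is a convex combination of probability measures, drawing a single sample is equivalent to first selecting an index $I$ with $\Pr{I = i \mid (V_\ell)} = V_i$ and then sampling from $\mu_I \circ A_I^{-1}$. Repeating this $k$ times independently yields indices $I_1,\ldots,I_k$ and points $(\xi_j,\eta_j)_{j=1}^k$. I would group $[k]$ into blocks according to the value of $I_j$, enumerate these blocks $B_1,\ldots,B_r$ in order of first appearance, and set $k_j = |B_j|$. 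By the stick-breaking construction of $\mathrm{PD}(\alpha,\theta)$ recalled in Section~\ref{sec:twoparamproc} and Equation~\eqref{eq:EPPF}, $(r,k_1,\ldots,k_r)$ has the same distribution as the seating plan produced by the $(\alpha,\theta)$ Chinese restaurant process after step~$k$.

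Next, writing $i_1,\ldots,i_r$ for the distinct values in $I_1,\ldots,I_k$ in order of first appearance, note that the rectangles $A_{i_j}([0,1]^2)$ are pairwise disjoint and their relative order in each coordinate is the relative order of the pairs $(X_{i_j},Y_{i_j})$. I would argue by exchangeability that, conditionally on $r$, the sequence $(X_{i_j},Y_{i_j})_{j=1}^r$ is an i.i.d.\ sample of length $r$ from $\mu_\cH$: the family $(X_i,Y_i)_{i \ge 1}$ is independent of $(V_\ell,I_1,\ldots,I_k)$ and i.i.d.\ conditional on $\mu_\cH$, so the conditional joint law of the selected subsequence depends only on its length. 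Consequently, the pattern $\nu$ induced by these $r$ representatives is distributed as $\mathrm{Perm}(r,\mu_\cH)$ and is independent of the partition produced in the previous step.

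Within each block $B_j$, the $k_j$ points share the index $i_j$ and are, conditionally, i.i.d.\ samples from $\mu_{i_j} \circ A_{i_j}^{-1}$; equivalently, they arise by applying the coordinate-wise affine map $A_{i_j}$ to i.i.d.\ samples from $\mu_{i_j}$. Since $A_{i_j}$ is strictly increasing in each coordinate, the pattern induced by the block equals the pattern of the underlying $\mu_{i_j}$-samples, which by definition is distributed as $\mathrm{Perm}(k_j,\mu_\cC)$; different blocks produce independent such patterns because the $\mu_i$ are independent copies of $\mu_\cC$ and further independent of $\mu_\cH$ and of $(X_i,Y_i)_{i \ge 1}$.

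Finally, I would combine these pieces. The disjointness of the rectangles $A_{i_j}([0,1]^2)$ means that the global pattern of the $k$ sample points is obtained by placing the $r$ block-rectangles according to $\nu$ and filling the $j$th block with $\nu_j$, which is exactly the substitution $\nu[\nu_1,\ldots,\nu_r]$ from Section~\ref{sec:superperm}. Together with the mutual independence of $\nu$ and $\nu_1,\ldots,\nu_r$ conditional on $(r,k_1,\ldots,k_r)$, this yields the stated description of $\mathrm{Perm}(k,\mu_\cG)$. The main technical point to get right is the bookkeeping of conditional independence, separating the randomness of $(V_\ell)$, of the selection indices $(I_j)$, of $\mu_\cH$ together with the external samples $(X_i,Y_i)$, and of the component permutons $(\mu_i)$; once this structure is laid out, the identification with the CRP via~\eqref{eq:EPPF} and the exchangeability argument above deliver the result.
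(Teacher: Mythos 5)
Your proposal is correct and follows essentially the same route as the paper's proof: decompose each $\mu_\cG$-sample into an index chosen with probability $V_i$ plus a sample from $\mu_i$ pushed through $A_i$, identify the block structure with the $(\alpha,\theta)$ Chinese restaurant process via Equation~\eqref{eq:EPPF}, and use the monotone, disjoint ranges of the maps $A_i$ to read off the head pattern from $(X_i,Y_i)$ and the within-block patterns from the $\mu_i$. Your extra bookkeeping of the conditional independence structure and the exchangeability argument for the selected subsequence $(X_{i_j},Y_{i_j})_{j=1}^r$ only makes explicit what the paper's proof leaves implicit.
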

\begin{proof}
	Sampling from $\mu_\cG$ means choosing an index $i \ge 1$ with probability $V_i$, then sampling $(u,v)$ according to $\mu_i$, and returning $A_i(u,v)$.
	
	This way, if we sample $k \ge 1$ points according to $\mu_\cG$ we hit $r \ge 1$ of pairwise distinct indices $i_1, \ldots, i_r \ge 1$, $r \ge 1$ (in the order of the first time hit an index), such that $i_j$ got chosen a number $k_j \ge 1$ times, $\sum_{j=1}^r k_j = k$.
	
	By the argument of Section~\ref{sec:crp}, in particular Equation~\eqref{eq:EPPF}, it follows that the number $r \ge 1$ of indices and their multiplicities $k_1, \ldots, k_r$  are distributed like the number of tables and their occupants in an $(\alpha, \theta)$ Chinese restaurant process  with $k$ customers.
	
	Note that $A_i(]0,1[^2) = ]F(X_i-), F(X_i)[ \times ]G(Y_i-), G(Y_i)[$, with $F$ and $G$ being monotonically increasing. Hence, for $i, i' \ge 1$, we have that the $x$-range of $A_i$ lies fully to the left of the $x$-range of $A_{i'}$ or fully to its right, depending on whether $X_i < X_{i'}$ or $X_i > X_{i'}$. Likewise, the $y$-range of $A_i$ lies either fully below or above the $y$-range of $A_j$, depending on whether $Y_i < Y_{i'}$ or $Y_i > Y_{i'}$. 
	
	Thus, the order relation of $\mu_\cG$-samples corresponding to different indices from  $[r]$ is the same as for $(X_1, Y_1), \ldots, (X_r, Y_r)$. That is, it is distributed like $\mathrm{Perm}(r, \mu_{\cH})$. 
	
	By the definition of $A_i$, $i \ge 1$, it also follows that the order relation between $\mu_\cG$-samples corresponding to the same index from $[r]$ is identical to the order relation between the corresponding instance of $\mu_i$, i.e. it is distributed like $\mathrm{Perm}(k_i, \mu_i)$.
	
	Therefore, the permutation $\mathrm{Perm}(k, \mu_{\cG})$ induced by the $k$ independent samples of $\mu_\cG$ is distributed as the substitution $\nu[\nu_1,\ldots,\nu_r]$.
\end{proof}

In the general setting considered here, the random permuton $\mu_{\cG}$ is the distributional limit of the random permutations $\sigma^\cG_n$ as their size $n$ tends to infinity.

\begin{theorem}
	\label{te:main3}
	Suppose that Conditions \eqref{eq:pgraphcrit}--\eqref{eq:pcompconv} hold. Then
	\[
	\sigma_n^{\cG} \convd \mu_{\mathrm{PD}}(\alpha, -\alpha \beta, L_{\cH}, L_{\cC})
	\]
	as random permutons.
\end{theorem}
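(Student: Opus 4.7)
The plan is to mirror the proof of Theorem~\ref{te:main1}, replacing graphon convergence via induced subgraphs with permuton convergence via induced patterns. By Proposition~\ref{pro:doit} and the uniqueness of permuton limits, it suffices to show that for every fixed $k \ge 1$, the pattern $\pat_{I_{n,k}}(\sigma_n^{\cG})$ converges in distribution to $\mathrm{Perm}(k, \mu_{\mathrm{PD}}(\alpha, -\alpha\beta, L_\cH, L_\cC))$ as $n \to \infty$.

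First, I would exploit the unique decomposition of $\sigma_n^{\cG}$ into its head structure of size $N_n$ and components of sizes $K_1, \ldots, K_{N_n}$ (ordered by the smallest contained position). The generating series satisfy $\cG(z) = \cH(\cC(z))$, so the joint law of $(N_n, K_1, \ldots, K_{N_n})$ coincides with that of the Gibbs partition model with weights $v_n = h_n$ and $w_n = c_n$. Assumptions~\eqref{eq:pgraphcrit},~\eqref{eq:pgraphcondw} and~\eqref{eq:pgraphcondv} then allow me to invoke Theorem~\ref{te:gibbsdilutegene} to conclude that the component sizes rescaled by $n^{-1}$ converge, as a point process, to $\mathrm{PD}(\alpha, -\alpha\beta)$. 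In particular, finitely many macroscopic components carry almost all of the mass in the limit.

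Next, fix $k \ge 1$ and pick a uniform $k$-subset $I_{n,k} \subset [n]$ of positions. The positions induce a partition of $[k]$ according to which component contains each selected position. Since the component sizes rescaled by $n^{-1}$ converge to a $\mathrm{PD}(\alpha, -\alpha\beta)$ process and the selected positions are uniform and independent in the limit, the argument of Section~\ref{sec:crp} and the Pitman formula~\eqref{eq:EPPF} give that the induced partition of $[k]$ (listed in order of smallest element, of sizes $n_1, \ldots, n_s$) converges in distribution to the seating plan of an $(\alpha, \theta)$ Chinese restaurant process with $k$ customers, for $\theta = -\alpha\beta$. Conditionally on its size, the head structure is distributed like $\sigma^\cH_m$ and each component is distributed like $\sigma^\cC_{K_i}$, independently. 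The pattern $\pat_{I_{n,k}}(\sigma_n^{\cG})$ is exactly the substitution of the patterns induced on each occupied component into the pattern induced on the head structure by the occupied components. Here I rely on the basic combinatorial fact that substitution of permutations commutes with taking patterns in the natural way, so that the pattern of a substitution $\theta[\nu_1, \ldots, \nu_\ell]$ restricted to a subset of positions equals the substitution of the patterns on the individual blocks into the pattern of the head structure on the occupied blocks.

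Finally, condition~\eqref{eq:pcompconv} gives $\sigma^\cH_m \convd \mu_\cH$ and $\sigma^\cC_m \convd \mu_\cC$ as $m \to \infty$, so by Proposition~\ref{pro:doit} the induced patterns of fixed size $r$ on $\sigma^\cH_m$ converge to $\mathrm{Perm}(r, \mu_\cH)$ and similarly for $\cC$. Combining this with the preceding step, $\pat_{I_{n,k}}(\sigma_n^{\cG})$ converges in distribution to the permutation produced by the procedure of Lemma~\ref{le:finpdperm}, which is exactly $\mathrm{Perm}(k, \mu_{\mathrm{PD}}(\alpha, -\alpha\beta, L_\cH, L_\cC))$. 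Since this holds for every $k \ge 1$, Proposition~\ref{pro:doit} yields the desired permuton convergence. The main delicate point is handling the mesoscopic and microscopic components: Theorem~\ref{te:gibbsdilutegene} ensures these collectively carry vanishing mass, so with probability tending to one all $k$ selected positions fall into macroscopic components, making the above limit identification valid uniformly in the relevant joint distribution.
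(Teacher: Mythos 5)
Your proposal is correct and follows essentially the same route as the paper: the paper's proof of Theorem~\ref{te:main3} simply invokes the arguments of Theorem~\ref{te:main1} (decomposition into head and components, identification with the Gibbs partition model, Theorem~\ref{te:gibbsdilutegene} and the Chinese restaurant process via~\eqref{eq:EPPF}, conditional laws of head and components) combined with Lemma~\ref{le:finpdperm} and Proposition~\ref{pro:doit}, which is exactly what you spell out. The only difference is that you make explicit some steps (substitution commuting with pattern extraction, macroscopic components capturing the sampled positions) that the paper leaves implicit in the phrase ``analogous arguments''.
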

\begin{proof}
	By analogous arguments as for Theorem~\ref{te:main1}, it follows that the pattern of $\sigma_n^{\cG}$ induced by $k \ge 1$ random distinct samples converges in distribution to the random permutation $\mathrm{Perm}(k, \mu_{\cG})$ described in Lemma~\ref{le:finpdperm}. Since this holds for every fixed $k$, Proposition~\ref{pro:doit} yields
	 $\sigma_n^\cG\convd\mu_\cG$ as random permutons.
\end{proof}

\subsection{Repellent regimes}

In the dilute regime the asymptotic shapes of the head structure and component structures combine into a new limit object. We describe a phase diagram for regimes where the limit is with some probability identical to that of the head or identical to that of a single large component instead.

\subsubsection{Dense regime}

In the  dense regime  the head structure dominates: Suppose that $\rho_\cH>0$ and
\[
h_n = \ell_\cH(n) n^{-b}\rho_\cH^{-n} 
\]
for a slowly varying function $\ell_\cH$ and an exponent $b>1$. Furthermore suppose that one of the following  cases holds.
\begin{enumerate}[\qquad i)]
	\item 	We have $\rho_\cH = \cC(\rho_\cC)$ and 
	\[
	c_n = \ell_\cC(n) n^{- a} \rho_\cC^{-n}
	\]	
	for a slowly varying function  $\ell_\cC$ and an exponent $a>2$, such that  $1 < b <a$, or  $b=a$ and  $\ell_\cC(n) = o(\ell_\cH(n))$.
	\item We have $\rho_\cH < \cC(\rho_\cC)$ and $\gcd\{ n \ge 0 \mid c_n > 0\} = 1$.
\end{enumerate}
Moreover, assume that
\[
\sigma_n^\cH \convd \mu_\cH
\]
for a random permuton $\mu_\cH$.

\begin{theorem}
	\label{te:perdense}
	Under the preceding assumptions, we have $\sigma_n^\cG \convd \mu_\cH$ as $n \to \infty$.
\end{theorem}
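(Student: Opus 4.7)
The plan is to adapt the proof of Theorem~\ref{te:dense} from the graph setting, replacing subgraph density considerations with pattern density considerations, and exploiting exchangeability of component sizes in the superpermutation model. As a first step, I would invoke Kolchin's representation so that the multiset of component sizes $(K_1,\ldots,K_{N_n})$ of $\sigma_n^\cG$ is distributed as in a Gibbs partition with weight-sequences $(h_n)_{n\ge0}$ and $(c_n)_{n\ge0}$. Applying~\cite[Thm.~3.1, Cor.~3.4]{MR4780503} exactly as in Theorem~\ref{te:dense} gives $N_n\to\infty$ with $N_n/n\to 1/\mu$ in probability for some constant $\mu>0$, together with $\max_{j} K_j = o_p(n)$. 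A further observation is that, because the Boltzmann weight $\omega_\cH(\theta)\prod_j \omega_\cC(\nu_j)$ of a superpermutation is invariant under any permutation of its component data, the tuple $(K_1,\ldots,K_{N_n})$ is exchangeable conditionally on $N_n$, and, conditionally on $N_n$, independent of the head permutation~$\theta_n$, which is itself distributed as~$\sigma_{N_n}^\cH$.

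By Proposition~\ref{pro:doit}, it suffices to prove $\Exb{\mathrm{occ}(\nu,\sigma_n^\cG)}\to\Exb{\mathrm{occ}(\nu,\mu_\cH)}$ for every $\nu\in\cS_k$ and every $k\ge 1$. I would let $A_n$ denote the event that the $k$ positions of $I_{n,k}$ fall in $k$ distinct components; a union bound gives $\Prb{A_n^c}\le \binom{k}{2}\Exb{\max_j K_j}/n=o(1)$. On $A_n$ the block structure of a superpermutation forces $\pat_{I_{n,k}}(\sigma_n^\cG)=\pat_{J}(\theta_n)$, where $J=\{j_1<\cdots<j_k\}$ indexes the components containing the selected positions. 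Since the probability that $I_{n,k}$ places exactly one position in each component of a given $J$ equals $K_{j_1}\cdots K_{j_k}/\binom{n}{k}$, one computes
\[
\Exb{\one_{A_n}\one_{\pat_{I_{n,k}}(\sigma_n^\cG)=\nu}\mid N_n}
=\sum_{J\subset[N_n],\,|J|=k}\frac{\Exb{K_{j_1}\cdots K_{j_k}\mid N_n}}{\binom{n}{k}}\,\Prb{\pat_J(\sigma_{N_n}^\cH)=\nu}.
\]
By exchangeability the factor $\Exb{K_{j_1}\cdots K_{j_k}\mid N_n}$ is independent of~$J$, so summing over~$J$ collapses the right-hand side to $\Prb{A_n\mid N_n}\,\Exb{\mathrm{occ}(\nu,\sigma_{N_n}^\cH)\mid N_n}$.

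Finally, I would take expectations and pass to the limit: $\Prb{A_n}\to 1$ together with $N_n\to\infty$ in probability, the hypothesis $\sigma_n^\cH\convd\mu_\cH$, the continuous mapping theorem, and bounded convergence yield $\Exb{\mathrm{occ}(\nu,\sigma_n^\cG)}\to\Exb{\mathrm{occ}(\nu,\mu_\cH)}$, whence Proposition~\ref{pro:doit} gives $\sigma_n^\cG\convd\mu_\cH$. I expect the main obstacle to be justifying the exchangeability/collapse identity cleanly: the random subset $J$ induced by uniform position sampling is size-biased rather than uniform, and its total-variation distance from the uniform law on $k$-subsets need not vanish; routing the computation through $\Exb{K_{j_1}\cdots K_{j_k}\mid N_n}$ sidesteps this difficulty by using conditional independence of sizes and head, so that the size bias averages out once one takes outer expectations.
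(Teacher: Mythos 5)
Your proposal is correct and follows essentially the same route as the paper, which simply transfers the proof of Theorem~\ref{te:dense}: use the Gibbs-partition results of \cite{MR4780503} to get $\max_j K_j=o_p(n)$ and $N_n\convp\infty$, note that $k$ uniform positions then fall in distinct components with high probability so the induced pattern is a pattern of the head, and conclude via Proposition~\ref{pro:doit}. Your explicit collapse through $\Exb{K_{j_1}\cdots K_{j_k}\mid N_n}$ is a clean way of justifying the exchangeability step the paper states more briefly (indeed, by that same exchangeability the set $J$ is exactly uniform on $k$-subsets given $A_n$ and $N_n$, so the feared size-bias issue does not arise).
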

\begin{proof}
	The proof is analogous to the proof of Theorem~\ref{te:dense}.
\end{proof}

\subsubsection{Condensation regimes}

In the condensation regime a single giant component dominates: We assume that there exists a random permuton  $\mu_\cC$ such that  
\begin{align*}
	\sigma_n^\cC \convd \mu_{\cC}.
\end{align*}
There are three subcases:

\vspace{1em}

\noindent \textbf{Convergent case:}
The convergent regime assumes that $0<\cH'(\cC(\rho_\cC))< \infty$ and any of the following conditions hold.

\begin{enumerate}[\qquad i)]
	\item We have $\rho_\cH = \cC(\rho_\cC)$ and \[
	h_n = \ell_\cH(n) n^{-b}\rho_\cH^{-n} \qquad \text{and} \qquad c_n = \ell_\cC(n) n^{- a} \rho_\cC^{-n}
	\]
	for slowly varying functions~$\ell_\cH$ and $\ell_\cC$, an exponent $b > 2$, and an exponent~$a$ such that either $1 < a < b$, or $a=b$ and $\ell_\cH(n) = o(\ell_\cC(n))$.
	\item We have $\infty \ge \rho_\cH > \cC(\rho_\cC)$ and \[
	\frac{c_n}{c_{n+1}} \to \rho_\cC > 0 \qquad \text{and} \qquad \frac{1}{c_n} \sum_{i+j = n} c_i c_j \to 2 \cC(\rho_\cC) < \infty\] as $n \to \infty$. (Note that the last two limits are automatically satisfied in case $c_n = \ell_\cC(n) n^{- a} \rho_\cC^{-n}$ with $a>1$ constant,  $\ell_\cC$ slowly varying, and $\rho_\cC > 0$.)
	\item We have $\sum_{k \ge 0} h_k \rho_{\cH}^k k^{1+a+\delta} < \infty$ and $c_n = \ell_\cC(n) n^{- a} \rho_\cC^{-n}$ for $a>1$, $\delta>0$ constants  and $\ell_\cC$ a slowly varying function.
	\item We have $c_n \sim c_\cC n^{- a} \rho_\cC^{-n}$ for constants $a>1$, $c_\cC > 0$, and additionally one of the following conditions hold.
	\subitem a) $\sum_{k \ge 0} h_k \rho_{\cH}^k k^{1+a} < \infty$.
	\subitem b) $1<a<2$.
	\subitem c) $a = 2$ and $\sum_{k \ge 0} h_k \rho_{\cH}^k k (\log k)^{2 + \delta} < \infty$ for some $\delta >0$.
	\subitem d) $2<a<3$ and $h_n \rho_{\cH}^n = o(n^{-a})$.
	\subitem e) $a=3$ and $h_n \rho_{\cH}^n = o(1 / (n^3 \log \log n))$.
\end{enumerate}

These conditions originate from the study of randomly stopped sums~\cite{MR4038060} and composition schemes with subexponential densities~\cite{MR0348393,MR772907}.

\vspace{1em}

\noindent\textbf{Superexponential case:} In this regime we assume $\rho_\cH \in ]0,\infty]$ and $\rho_\cC= 0$. Furthermore, assume that for any $k \ge 2$ we have
\[
\sum_{\substack{i_1 + \ldots i_k = n \\ 1 \le i_1, \ldots i_k < n - (k-1)}} c_{i_1} \cdots c_{i_k} = o(c_{n-(k-1)}).
\]
A sufficient condition for this equation is given~\cite[Lem. 6.17]{MR4132643}. We furthermore assume that $c_i>0$ for all large enough integers $i$.

\vspace{1em}

\noindent\textbf{Mesocondensation case: }  In this regime, we assume $\rho_\cH = \cC(\rho_\cC)$ and \[
h_n = \ell_\cH(n) n^{-b}\rho_\cH^{-n} \qquad \text{and} \qquad c_n = \ell_\cC(n) n^{- 1} \rho_\cC^{-n}
\]
for slowly varying functions~$\ell_\cH$ and $\ell_\cC$, and an exponent $b > -2$.

\vspace{1em}

\begin{theorem}
	\label{te:perdon}
	In any of the three preceding cases, we have $\sigma_n^\cG \convd \mu_\cC$ as $n \to \infty$.
\end{theorem}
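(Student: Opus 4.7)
My plan is to mirror the proof of Theorem~\ref{te:condensation} with induced patterns replacing induced subgraphs. First, as in the proof of Theorem~\ref{te:main3}, I would note that the number $N_n$ and sizes $K_1, \ldots, K_{N_n}$ of the components of $\sigma_n^\cG$ are distributed exactly as the block count and block sizes in the Gibbs partition model with weight sequences $(h_n)$ and $(c_n)$. Hence the assumptions in each of the three subcases coincide verbatim with their counterparts in Section~\ref{sec:graphrepel}, and the same references—\cite{MR4780503,MR3854044} in the convergent case, \cite[Thm. 6.18]{MR4132643} in the superexponential case, and \cite[Thm. 1.1 and Thm. 1.2]{bosio2025gibbspartitionslatticepaths} in the mesocondensation case—produce a giant component of random size $K^*$ with $K^*/n \convp 1$ (the remainder being $O_p(1)$ in the first two cases).

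Second, by Proposition~\ref{pro:doit} it suffices to show that for each fixed $k \ge 1$,
\[
\pat_{I_{n,k}}(\sigma_n^\cG) \convd \mathrm{Perm}(k,\mu_\cC)
\]
as random elements of the finite set $\cS_k$. Let $E_n$ be the event that all $k$ indices of $I_{n,k}$ lie inside the giant component. By the substitution operation, that component occupies a contiguous range of $K^*$ positions in $[n]$, so a union bound yields $\Pr{E_n^c} \le k\,\Exb{1 - K^*/n} \to 0$. Conditionally on $E_n$, on $K^*$, and on the Gibbs partition, the restriction of $\sigma_n^\cG$ to the giant component is distributed as $\sigma^\cC_{K^*}$, and (by exchangeability) $I_{n,k}$ restricted to the giant is a uniform random $k$-subset of its $K^*$ positions. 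Thus the pattern on $E_n$ coincides with $\pat_{I_{K^*,k}}(\sigma^\cC_{K^*})$.

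Finally, the assumption $\sigma_m^\cC \convd \mu_\cC$ is equivalent, via Proposition~\ref{pro:doit}, to the finite-set law convergence $\pat_{I_{m,k}}(\sigma_m^\cC) \convd \mathrm{Perm}(k,\mu_\cC)$. Combined with $K^* \convp \infty$, a standard subsequence (or dominated convergence on $\cS_k$) argument upgrades this to $\pat_{I_{K^*,k}}(\sigma^\cC_{K^*}) \convd \mathrm{Perm}(k,\mu_\cC)$, and absorbing the negligible event $E_n^c$ completes the proof. The argument is entirely parallel to the graph case; the only step requiring a moment of care, which I expect to be the main (but still minor) obstacle, is the transfer from deterministic-size convergence of patterns in $\sigma^\cC_m$ to the random-size convergence at $m = K^*$.
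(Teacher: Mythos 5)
Your proposal is correct and follows essentially the same route as the paper, whose proof of Theorem~\ref{te:perdon} simply declares it analogous to Theorem~\ref{te:condensation}: identify the component sizes with the Gibbs partition model, invoke the same condensation results to get a giant component of size $n-o_p(n)$, note that all $k$ sampled indices lie in that (contiguous) block with high probability, and transfer $\sigma^\cC_m \convd \mu_\cC$ through Proposition~\ref{pro:doit}. Your added care about the random-size substitution $m = K^*$ and the contiguity of the giant block's positions are exactly the minor details the paper leaves implicit.
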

\begin{proof}
	Analogous to the proof of Theorem~\ref{te:condensation}.
\end{proof}

\subsubsection{Mixture regime}
In the mixture regime the limit of $\sigma_n^\cG$ is a mixture of the limits of large head structures and large components.

Suppose that $\rho_\cH = \cC(\rho_\cC)$. Furthermore, suppose that
\[
h_n = \ell_\cH(n) n^{-a}\rho_\cH^{-n} \qquad \text{and} \qquad c_n = \ell_\cC(n) n^{- a} \rho_\cC^{-n}
\]	
for slowly varying functions~$\ell_\cH, \ell_\cC$ and a constant $a > 2$. Let 
\[
\mu = \cC(\rho_\cC)^{-1} \sum_{n \ge 1 } n c_n \rho_\cC^n.
\] 
Suppose that the limit
\[
p := \lim_{n \to \infty} \frac{\mu^{a-1}}{\cH'(\cC(\rho_\cC))}  \frac{\ell_\cH(n)}{\ell_\cC(n)} >0
\]
exists and is positive. Suppose that there exist random permutons $\mu_\cH$ and $\mu_\cC$ with
\begin{align*}
	\sigma_n^\cH \convd \mu_{\cH} \qquad \text{and} \qquad \sigma_n^\cC \convd \mu_{\cC}.
\end{align*}

\begin{theorem}
	Under the preceding assumptions, we have $0<p<1$ and \[
	\sigma_n^\cG \convd B \mu_\cH + (1-B) \mu_{\cC}
	\] with $B$ denoting an independent $\mathrm{Bernoulli}(p)$ random variable.
\end{theorem}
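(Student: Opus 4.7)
The plan is to mirror the proof of Theorem~\ref{te:mixture} in the permuton setting, exploiting the fact that the component sizes of $\sigma_n^\cG$ are distributed as in the Gibbs partition model. As in the proof of Theorem~\ref{te:main3}, the generating series factorize as $\cG(z) = \cH(\cC(z))$, so the number $N_n$ of components of $\sigma_n^\cG$ and the vector of their sizes $(K_1,\ldots,K_{N_n})$ (listed in, say, the order of their leftmost position) are distributed exactly as in the Gibbs partition model with weights $v_n = h_n$ and $w_n = c_n$. The structural behaviour of this partition is therefore completely governed by the head and component partition functions, independently of whether we build supergraphs or superpermutations on top of it.

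Under the present assumptions we are in the mixture regime of the Gibbs partition model studied in~\cite[Thm.~3.12]{MR4780503}. That result provides an event $\cE_n = \{N_n \ge n/(2\mu)\}$ with
\[
\lim_{n \to \infty} \Pr{\cE_n} = p \in \,]0,1[\,,
\]
such that conditionally on $\cE_n$ the Gibbs partition behaves as in the dense regime (a large number of components, each of size $o(n)$), while conditionally on $\cE_n^c$ it behaves as in the convergent case of the condensation regime (a single giant component of size $n - O_p(1)$ with a stochastically bounded remainder). In particular, $0<p<1$ holds without any further work.

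I would then treat the two conditional regimes separately, exactly as in the proof of Theorems~\ref{te:perdense} and~\ref{te:perdon}. Fix $k \ge 1$ and draw $k$ i.i.d.\ uniform samples from $\mu_{\sigma_n^\cG}$; equivalently, sample $k$ uniform random positions in $\sigma_n^\cG$ and record the induced pattern $\pat_{I_{n,k}}(\sigma_n^\cG)$. On $\cE_n$, with probability tending to one, all $k$ sampled positions fall into distinct components; by exchangeability of the labels, the indices of the components containing them are uniform independent among the $N_n$ components of $\sigma_n^\cG$. Since the head structure is drawn with probability proportional to its $\omega_\cH$-weight conditionally on its size, the assumption $\sigma_n^\cH \convd \mu_\cH$ and Proposition~\ref{pro:doit} yield
\[
(\pat_{I_{n,k}}(\sigma_n^\cG) \mid \cE_n) \convd \mathrm{Perm}(k,\mu_\cH).
\]
On $\cE_n^c$, with probability tending to one, all $k$ sampled positions fall into the giant component, which is drawn conditionally on its size with probability proportional to its $\omega_\cC$-weight. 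The assumption $\sigma_n^\cC \convd \mu_\cC$ then gives
\[
(\pat_{I_{n,k}}(\sigma_n^\cG) \mid \cE_n^c) \convd \mathrm{Perm}(k,\mu_\cC).
\]

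Combining these two statements with $\Pr{\cE_n}\to p$ and conditioning on an independent $\mathrm{Bernoulli}(p)$ random variable $B$, one obtains
\[
\pat_{I_{n,k}}(\sigma_n^\cG) \convd \mathrm{Perm}(k,\,B\mu_\cH + (1-B)\mu_\cC)
\]
for every $k \ge 1$. By Proposition~\ref{pro:doit}, this is equivalent to the claimed permuton convergence $\sigma_n^\cG \convd B\mu_\cH + (1-B)\mu_\cC$. The only non-routine ingredient is the Gibbs-partition dichotomy from~\cite[Thm.~3.12]{MR4780503}, and the rest is a soft exchangeability argument combined with the assumed permuton convergence of $\sigma_n^\cH$ and $\sigma_n^\cC$; since the Gibbs-partition step is imported as a black box, I do not anticipate any genuine obstacle beyond carefully assembling the conditional patterns.
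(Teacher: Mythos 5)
Your proposal is correct and follows essentially the same route as the paper, whose proof simply declares the argument analogous to Theorem~\ref{te:mixture}: the Gibbs-partition dichotomy of~\cite[Thm.~3.12]{MR4780503} gives the event $\cE_n$ with $\Pr{\cE_n}\to p\in\,]0,1[$, on which one argues as in the dense regime (Theorem~\ref{te:perdense}) and on whose complement as in the convergent condensation case (Theorem~\ref{te:perdon}), before assembling the conditional pattern limits via Proposition~\ref{pro:doit}. No discrepancy worth noting.
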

\begin{proof}
	Analogous to the proof of Theorem~\ref{te:mixture}.
\end{proof}

\section{Separable superpermutations}

\label{sec:supp}

\subsection{Background on separable permutations}

We recall background on the class $\cP$ of separable permutations. It is known as the class of all permutations avoiding the patterns 2413 and 3142. Alternatively, it is the smallest class of all permutations that is closed under taking substitutions of one member with another and contains all permutations from the symmetric groups $\cS_1$ and $\cS_2$~\cite{MR1620935}.

Separable permutations may be encoded using decomposition trees. We may define a \emph{generalized decomposition tree} as a plane tree where inner vertices carry signs ($\oplus$ or $\ominus$) and have at least two children. The separable permutation corresponding to such a tree is formed by taking the number of leaves as the size of the permutation, and setting $i \prec j$ if and only if the lowest common ancestor of the $i$th and $j$th leaves in depth-first-search order has a $\oplus$-sign. This yields a linear order that corresponds to a permutation which maps an integer to the corresponding position in this order. There may be multiple generalized decomposition trees corresponding to the same permutation, hence for \emph{decomposition trees} we additionally require that the sign of any non-root vertex is the opposite of the sign of its parent. Hence the sign of the root vertex determines all other signs. Any separable permutation may be represented in a unique manner by a decomposition tree~\cite{MR1620935}.

Decomposing according to the degree of the root vertex yields that class $\cA$ of decomposition trees whose root-vertex has a plus sign satisfies 
\begin{align}
	\label{eq:asum}
	\cA(z) &= z + \sum_{k \ge 2} \cA(z)^k   \\
		&= z + \cA(z)^2 / (1 - \cA(z)) \nonumber  \\
		&= \frac{1}{4}(1 +z - \sqrt{1 - 6z +z^2}). \nonumber
\end{align}
By symmetry,
\begin{align}
	\cP(z) = 2 \cA(z) - z.
\end{align}
It is elementary that $\cP(z)$ has radius of convergence
\begin{align}
	\label{eq:dadyaa}
	\rho_\cP= 3 - 2 \sqrt{2}
\end{align}
and
\begin{align}
	\label{eq:dadyan}
	\cP(\rho_\cP) = \sqrt{2}-1
\end{align}
and
\begin{align}
	\label{eq:apar}
	[z^n] \cP(z) \sim  \frac{\sqrt{2}-1}{2^{3/4}\sqrt{\pi}} n^{-3/2} \rho_\cP^{-n}
\end{align}
as $n \to \infty$.

From Equation~\eqref{eq:apar} it follows by a general principle~\cite[Lem 6.7]{MR4132643} (which in this case may also be easily verified directly), that a uniform decomposition tree with $n$ leaves is distributed like a Bienaym\'e--Galton--Watson tree conditioned on having $n$ leaves. The offspring distribution $\xi$ is given by 
\begin{align*}
	\Pr{\xi = 0} = 2 - \sqrt{2}, \quad \Pr{\xi=1} = 0, \quad \Pr{\xi = k} = \left( (2 - \sqrt{2})/2 \right)^{k-1} \quad \text{for $k \ge 2$}.
\end{align*}
Clearly $\Ex{\xi}=1$ and  $0< \Va{\xi}<\infty$.
The sign of the root vertex is determined by a fair coin flip, and this determines the signs of all other inner vertices. 

The  uniform random separable permutation $\sigma_n^\cP$ of size $n$  is distributed like the permutation corresponding to this decomposition tree. By Equation~\eqref{eq:yooo} it follows that if we select $k$ uniform independent leaves in the decomposition tree, then the corresponding essential vertices together with the $\oplus$/$\ominus$ signs converge in distribution to the uniform random $k$-tree $\mS(k)$ with  signs on the internal vertices determined by independent fair coin-flips. Let $\sigma(k)$ denote the corresponding permutation. It follows that the pattern $\mathrm{pat}_{I_{n,k}}(\sigma_n^\cP)$ induced by a uniformly chosen $k$-element  subset $I_{n,k} \subset [n]$ satisfies
\[
\mathrm{pat}_{I_{n,k}}(\sigma_n^\cP) \convd \sigma(k).
\]
By Proposition~\ref{pro:doit}, this entails that 
\begin{align}
	\label{eq:mupermuton}
	\sigma_n^\cP \convd \mu_{\mathrm{Brownian}}
\end{align}
for some random permuton $\mu_{\mathrm{Brownian}}$. This was shown in~\cite{zbMATH06919021} by an argument that involves decomposition trees but analyses their asymptotic shape combinatorially instead of using Aldous' local limit theorem~\cite[Lem. 26]{MR1207226} for the finite dimensional marginals. The short argument given here is closer to that of~\cite{MR4115736}. The random permuton $\mu_{\mathrm{Brownian}}$ is called the Brownian (separable) permuton, as it may be constructed from a  Brownian excursion~\cite{zbMATH07359153}. Further notable universal limiting objects include the Baxter permuton~\cite{zbMATH07527828},       the Brownian skew permuton~\cite{zbMATH07740462}, square permutons~\cite{zbMATH07325638},  the recursive separable permuton~\cite{zbMATH07836948}, and the anti-diagonal~\cite{hohmeier2025permutonlimitspermutationsavoiding}. 

\subsection{Multidimensional separable superpermutations with splits}

We set $\cP_1 = \cP$ to the class of separable permutations. For each $d \ge 2$ we let $\cP_d$ denote the class of superpermutations with head structures in $\cP$ and component structures from the class of ordered pairs $(P, P')$ of superpermutations from $\cP_{d-1}$. The ordered pair $(P,P')$ is identified with a superpermutation via the substitution $\mathrm{id}_2[P,P']$, with $\mathrm{id}_2 \in \cS_2$ denoting the identity permutation of size $2$. This way,
\[
	\cP_{d}(z) = \cP(\cP_{d-1}(z)^2).
\]
Due to the ``split'' of the components into two parts we refer to $\cP_d$ as the class of $d$-dimensional separable superpermutations with splits.

\begin{lemma}
	\label{le:sepasym}
	For each integer $d \ge 2$ we have 
	\[
	[z^n] \cP_d(z) \sim (\sqrt{2}-1)\frac{2^{3/2-d-5/2^{d+1}}}{\Gamma(1-2^{-d})}n^{-1-2^{-d}} \rho_\cP^{-n}
	\]
	with $\rho_\cP= 3- 2 \sqrt{2}$ and $\cP_d(\rho_\cP)^2 = \rho_\cP$.
\end{lemma}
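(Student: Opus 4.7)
The plan is induction on $d$, with the base case $d=1$ already provided by Equation~\eqref{eq:apar}: specialising the claimed formula to $d=1$ yields $(\sqrt{2}-1)\,2^{3/2-1-5/4}/\Gamma(1/2) = (\sqrt{2}-1) \cdot 2^{-3/4}/\sqrt{\pi}$ and exponent $n^{-3/2}$, which matches~\eqref{eq:apar}. The side identity $\cP_d(\rho_\cP)^2 = \rho_\cP$ follows by a short induction independent of the tail asymptotics: by the induction hypothesis $\cP_{d-1}(\rho_\cP)^2 = \rho_\cP$, so the defining relation $\cP_d(z) = \cP(\cP_{d-1}(z)^2)$ yields $\cP_d(\rho_\cP) = \cP(\rho_\cP) = \sqrt{2}-1$ by~\eqref{eq:dadyan}, and squaring gives $\cP_d(\rho_\cP)^2 = (\sqrt{2}-1)^2 = 3 - 2\sqrt{2} = \rho_\cP$.

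For the asymptotics I would view the composition as a Gibbs scheme with $V(z) = \cP(z)$ and $W(z) = \cP_{d-1}(z)^2$, placing us in the dilute regime framework of Section~\ref{sec:gibbs}. The induction hypothesis together with a standard subexponential convolution estimate for regularly varying sequences gives
\[
w_n \rho_\cP^n := [z^n]\cP_{d-1}(z)^2 \cdot \rho_\cP^n \sim 2\,\cP_{d-1}(\rho_\cP)\cdot [z^n]\cP_{d-1}(z)\cdot \rho_\cP^n \sim 2\sqrt{\rho_\cP}\, C_{d-1}\, n^{-1-2^{-(d-1)}},
\]
where $C_{d-1}$ is the constant from the $(d-1)$-case. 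Here I use $\cP_{d-1}(\rho_\cP) = \sqrt{\rho_\cP}$ from the side identity. This verifies condition~\eqref{eq:condw} with $\alpha = 2^{-(d-1)}$ and a slowly varying function $L_w$ tending to $\rho_\cP \cdot 2 \sqrt{\rho_\cP} \, C_{d-1} / \alpha$ (using Karamata). Condition~\eqref{eq:condv} holds by~\eqref{eq:apar} with $\beta = 1/2$ and $L_v$ tending to $(\sqrt{2}-1)/(2^{3/4}\sqrt{\pi})$, and criticality~\eqref{eq:crit} is precisely the identity $\cP_{d-1}(\rho_\cP)^2 = \rho_\cP$.

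With these ingredients, I would plug into formula~\eqref{eq:partitionfucntion} to obtain
\[
[z^n]\cP_d(z)\,\rho_\cP^n \sim L_v(A(n))\,A(n)^{-1/2}\, n^{-1}\,\alpha\, \Ex{X_\alpha^{\alpha\beta}},
\]
with $A(n) \sim (2^{-(d-1)}\sqrt{\rho_\cP}/(2 C_{d-1}))\, n^{2^{-(d-1)}}$ and $\Ex{X_\alpha^{2^{-d}}} = \sqrt{\pi}\,\Gamma(1-2^{-(d-1)})^{1/2}/\Gamma(1-2^{-d})$ from~\eqref{eq:xalphadilute}. The resulting expression immediately exhibits the correct power $n^{-1-2^{-d}}$.

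The main obstacle is purely bookkeeping: I need to collect powers of $2$, factors of $\sqrt{\rho_\cP}$, the Gamma function $\Gamma(1-2^{-(d-1)})^{1/2}$ arising from $\Ex{X_\alpha^{\alpha\beta}}$, and the factor $C_{d-1}^{1/2}$ carrying the reciprocal Gamma factor $\Gamma(1-2^{-(d-1)})^{-1/2}$ from the induction hypothesis, so that the two $\Gamma(1-2^{-(d-1)})^{1/2}$ factors cancel and only $\Gamma(1-2^{-d})^{-1}$ remains. Using $\rho_\cP^{1/4} = \sqrt{2}-1$ (a consequence of $\rho_\cP = (\sqrt{2}-1)^2$), the prefactor $(\sqrt{2}-1)^{3/2}\rho_\cP^{-1/4}$ collapses to $\sqrt{2}-1$, and the accumulated exponent of $2$ telescopes to $3/2 - d - 5/2^{d+1}$, giving the stated formula.
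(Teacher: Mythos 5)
Your proposal is correct and follows essentially the same route as the paper's proof: induction on $d$ with base case~\eqref{eq:apar}, the identity $\cP_d(\rho_\cP)^2=\rho_\cP$, the convolution asymptotics $[z^n]\cP_{d-1}(z)^2 \sim 2\sqrt{\rho_\cP}\,[z^n]\cP_{d-1}(z)$, and an application of Equation~\eqref{eq:partitionfucntion} with $\alpha=2^{-(d-1)}$, $\beta=1/2$, followed by the same constant bookkeeping (the paper records the recursion $C_d = 2^{1/4-d/2}(\sqrt{2}-1)^{1/2}\sqrt{C_{d-1}}\,\Gamma(1-2^{-(d-1)})^{1/2}/\Gamma(1-2^{-d})$ and solves it, which is equivalent to your plugging the explicit $C_{d-1}$ into the induction step). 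Two harmless slips in side remarks: the limit of $L_w$ should be $2\sqrt{\rho_\cP}\,C_{d-1}/\alpha$ without the extra factor $\rho_\cP$ (the $A(n)$ you then state is the correct one, matching the paper), and $\rho_\cP^{1/4}=(\sqrt{2}-1)^{1/2}$ rather than $\sqrt{2}-1$, although the collapse $(\sqrt{2}-1)^{3/2}\rho_\cP^{-1/4}=\sqrt{2}-1$ you assert is indeed correct.
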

\begin{proof}
		 Equation~\eqref{eq:dadyan} yields \[
		 \cP_1(\rho_\cP)^2= (\sqrt{2}-1)^2 = 3 - 2 \sqrt{2} = \rho_\cP.
		 \] For $d \ge 2$, let us assume by induction that $\cP_{d-1}^2(\rho_\cP)=\rho_\cP$ and that  
		 \[
		 [z^{n}]\cP_{d-1}(z) \sim C_{d-1} n^{-1-2^{-(d-1)}} \rho_\cP^{-n}
		 \] 
		for a constant $C_{d-1}>0$, with $C_1 =  \frac{\sqrt{2}-1}{2^{3/4}\sqrt{\pi}}$ by~\eqref{eq:apar}.
		 Then
		 \[
		 	\cP_d(\rho_\cP)^2 = \cP(\rho_\cP)^2 = \rho_\cP
		 \]
		 and
		 \begin{align*}
		 	[z^n]\cP_{d-1}(z)^2 &\sim 2\cP_{d-1}(\rho_\cP) [z^n] \cP_{d-1}(z) \\
		 						&= 2 \sqrt{\rho_\cP} [z^n] \cP_{d-1}(z)
		 \end{align*}
		 by standard methods, see for example~\cite[Thm. 2.4]{MR3854044}.  Hence we may apply Equation~\eqref{eq:partitionfucntion} for $U(z) = \cP_d(z)$,  $V(z) = \cP(z)$ and $W(z) = \cP_{d-1}(z)^2$. Specifically, $\rho_v=\rho_u=\rho_w = \rho_\cP$, $\beta=1/2$, $\alpha = 2^{-(d-1)}$, and 
	\begin{align*}
		L_v(x) &\sim C_1, \\
		L_w(x) &\sim 2 \sqrt{\rho_\cP} C_{d-1} / \alpha, \\
		A(x) &\sim x^\alpha / ( \rho_\cP^{-1} L_w(x) ).
	\end{align*}
	Hence, by~\eqref{eq:partitionfucntion} and~\eqref{eq:xalphadilute}
	\begin{align*}
		[z^n] \cP_d(z) &\sim L_v(A(n)) A(n)^{-\beta} n^{-1} \alpha \Ex{X_{\alpha}^{\alpha\beta}} \rho_\cP^{-n} \\ 
		&\sim C_{1} (n^\alpha / ( \rho_\cP^{-1} L_w(n) ))^{- \beta}    n^{-1} \alpha \Ex{X_{\alpha}^{\alpha\beta}} \rho_\cP^{-n} \\
		&\sim C_{1} ( n^\alpha / ( \rho_\cP^{-1} 2 \sqrt{\rho_\cP} C_{d-1} / \alpha ) )^{- \beta}    n^{-1} \alpha \Ex{X_{\alpha}^{\alpha\beta}} \rho_\cP^{-n} \\
		&\sim C_d n ^{-1-2^{-d}} \rho_\cP^{-n} 
	\end{align*}
	for
	\begin{align*}
		C_d &= C_1 \left( \frac{2 C_{d-1}}{\alpha \sqrt{\rho_\cP}}   \right)^{\beta} \alpha \Exb{X_\alpha^{\alpha \beta}}  \\
		&= C_1 \left( \frac{2 C_{d-1}}{\alpha \sqrt{\rho_\cP}}   \right)^{\beta} \alpha \Gamma(1-\alpha)^{\beta} \frac{\Gamma(1- \beta)}{\Gamma(1-\alpha \beta)} \\
		&= C_1 \left( \frac{2 C_{d-1}}{2^{-(d-1)} (\sqrt{2}-1)}   \right)^{1/2} 2^{-(d-1)} \Gamma(1-2^{-(d-1)})^{1/2} \frac{\sqrt{\pi}}{\Gamma(1-2^{-d})} \\ 
		&= \frac{\sqrt{2}-1}{2^{3/4}\sqrt{\pi}} \left( \frac{2^d C_{d-1}}{ \sqrt{2}-1}   \right)^{1/2} 2^{-(d-1)} \Gamma(1-2^{-(d-1)})^{1/2} \frac{\sqrt{\pi}}{\Gamma(1-2^{-d})} \\
		&= 2^{1/4 - d/2} (\sqrt{2}-1)^{1/2} \sqrt{C_{d-1}} \frac{\Gamma(1-2^{-(d-1)})^{1/2}}{\Gamma(1-2^{-d})}.
	\end{align*}
	Solving the recursion yields
	\[
		C_{d} = (\sqrt{2}-1)\frac{2^{3/2-d-5/2^{d+1}}}{\Gamma(1-2^{-d})},
	\]
	which is also correct for $d=1$.
	This completes the proof.
\end{proof}

We let $\sigma_{n,d}$ denote the uniform random $n$-sized $d$-dimensional separable superpermutation with splits. Let $L_{\mathrm{Brownian}} = \mathfrak{L}(\mu_{\mathrm{Brownian}})$ denote the law of the Brownian (separable) permuton.

\begin{theorem}
	\label{te:main4}
	Let $d \ge 2$ denote an integer. As $n\to \infty$,
	\[
	\sigma_{n,d} \convd \mu^{(d)} := \mu_{\mathrm{PD}}(2^{-(d-1)}, -2^{-d}, L_{\mathrm{Brownian}}, L_{d-1})
	\]
	as random permutons. Here we set $L_1 = L_{\mathrm{Brownian}}$, and recursively 
	$
	L_{d-1} = \mathfrak{L}(\mu^{(d-1)} )
	$
	the law of the limit for $d-1$. 
\end{theorem}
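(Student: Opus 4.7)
The argument mirrors the proof of Theorem~\ref{te:main2}: proceed by induction on $d$ and invoke Theorem~\ref{te:main3} in the dilute regime. The decomposition $\cP_d(z)=\cP(\cP_{d-1}(z)^2)$ presents $\cP_d$ as a composition with head class $\cP$ and component class $\cB:=\cP_{d-1}\times\cP_{d-1}$ of generating function $\cB(z)=\cP_{d-1}(z)^2$; one must feed Theorem~\ref{te:main3} with the correct exponents together with the two permuton marginals.

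For the exponents, I verify Conditions \eqref{eq:pgraphcrit}--\eqref{eq:pgraphcondv}. The head asymptotics $h_n\rho_\cP^n\sim C_1 n^{-3/2}$ from \eqref{eq:apar} give $\beta=1/2$. For the component side, Lemma~\ref{le:sepasym} yields $[z^n]\cP_{d-1}(z)\sim C_{d-1}n^{-1-2^{-(d-1)}}\rho_\cP^{-n}$ together with $\cP_{d-1}(\rho_\cP)^2=\rho_\cP$, so the standard squaring identity $[z^n]f(z)^2\sim 2f(\rho)[z^n]f(z)$ produces $[z^n]\cB(z)\sim 2\sqrt{\rho_\cP}\,C_{d-1}n^{-1-2^{-(d-1)}}\rho_\cP^{-n}$; thus $\alpha=2^{-(d-1)}$. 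Since $\cB(\rho_\cP)=\rho_\cP=\rho_\cH$, criticality holds, and $\theta=-\alpha\beta=-2^{-d}$ matches the parameter appearing in $\mu^{(d)}$.

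Next I establish the two permuton limits. The head limit $\sigma_n^\cP\convd\mu_{\mathrm{Brownian}}$ is precisely \eqref{eq:mupermuton}, giving $L_\cH=L_1$. For the component limit, a uniform element of $\cB_n$ is the substitution $\mathrm{id}_2[P,P']$ of a uniform pair $(P,P')\in\cP_{d-1}^2$ with $|P|+|P'|=n$. Viewing this as a composition scheme with the trivial one-term head class $\{\mathrm{id}_2\}$ (radius of convergence infinite) and components in $\cP_{d-1}$, case (ii) of the convergent condensation regime applies: $\infty>\cP_{d-1}(\rho_\cP)=\sqrt{\rho_\cP}$, and the subexponential conditions on $c_n=[z^n]\cP_{d-1}(z)$ are automatic from the power-law asymptotics of Lemma~\ref{le:sepasym} (see the parenthetical remark in case (ii) of Theorem~\ref{te:perdon}). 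Hence one of $P,P'$ has size $n-O_p(1)$ and converges, by the induction hypothesis, to $\mu^{(d-1)}$; the other contributes a vanishing corner in the permuton picture of $\mathrm{id}_2[P,P']$. Consequently $\sigma_n^\cB\convd\mu^{(d-1)}$, so $L_\cC=L_{d-1}$. At the base step $d=2$ this reads $L_\cC=L_1=L_{\mathrm{Brownian}}$, which is again just \eqref{eq:mupermuton}.

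Plugging $\alpha=2^{-(d-1)}$, $\theta=-2^{-d}$, $L_\cH=L_1$, $L_\cC=L_{d-1}$ into Theorem~\ref{te:main3} yields $\sigma_{n,d}\convd\mu_{\mathrm{PD}}(2^{-(d-1)},-2^{-d},L_1,L_{d-1})=\mu^{(d)}$, closing the induction. The only delicate point is the condensation argument for the ordered-pair component class $\cB$, where one must translate the ``giant component'' conclusion from the permutation setting to the permuton setting (the two small squares carved out by $\mathrm{id}_2[P,P']$ become negligible and a single rescaled copy of $\mu^{(d-1)}$ fills the unit square). Once that is in hand, all remaining pieces are bookkeeping on the exponents via Lemma~\ref{le:sepasym} and a direct appeal to Theorem~\ref{te:main3}.
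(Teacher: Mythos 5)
Your proposal is correct and follows essentially the same route as the paper: verify the dilute-regime exponents $\alpha=2^{-(d-1)}$, $\beta=1/2$ via Lemma~\ref{le:sepasym} and the squaring identity, obtain the head limit from~\eqref{eq:mupermuton} and the component (pair) limit from a one-giant-part condensation argument combined with the induction hypothesis, then conclude by Theorem~\ref{te:main3}. The only cosmetic difference is that the paper justifies the giant part in the pair class by citing an external result on composition schemes ({\cite[Thm. 3.1]{MR3854044}}), whereas you invoke the paper's own Theorem~\ref{te:perdon} (convergent case ii) for the scheme $\{\mathrm{id}_2\}\circ\cP_{d-1}$ — both are valid and yield the same conclusion.
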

\begin{proof}
	Using Lemma~\ref{le:sepasym} it follows that we are in the dilute  regime for  $U(z) = \cP_d(z)$,  $V(z) = \cP(z)$ and $W(z) = \cP_{d-1}(z)^2$ and hence $\beta=1/2$ and $\alpha= 2^{-(d-1)}$. Thus $\theta= -2^{-d}$.

	For $d=2$, Equation~\eqref{eq:mupermuton} yields permuton convergence of the head structure towards the Brownian permuton. As for the components, it follows from~\cite[Thm. 3.1]{MR3854044} that in a random permutation of size $n$ from the class $\cP^2$ of pairs of separable permutation with limiting probability $1/2$ the first component has size $n- O_p(1)$ (and the second $O_p(1)$, and with limiting probability $1/2$ the second component has $n-O_p(1)$ (and hence the first has size $O_p(1)$). Consequently, the permuton limit of a large component from $\cP^2$ is the Brownian permuton as well.
	
	By Theorem~\ref{te:main3}, it follows that
	\[
	\sigma_{n,2} \convd \mu^{(2)} := \mu_{\mathrm{PD}}(2^{-1}, -2^{-2}, L_{\mathrm{Brownian}}, L_{\mathrm{Brownian}}).
	\]
	By Induction on $d$ and Theorem~\ref{te:main3}, we obtain analogously for all $d \ge 2$
	\[
	\sigma_{n,d} \convd \mu^{(d)} := \mu_{\mathrm{PD}}(2^{-(d-1)}, -2^{-d}, L_{\mathrm{Brownian}}, L_{d-1}).
	\]
	This completes the proof.
\end{proof}

We set $\mu^{(1)} = \mu_{\mathrm{Brownian}}$ to  keep the notation consistent. The random permuton in Theorem~\ref{te:main4} admits an alternative parametrization:

\begin{proposition}
	\label{pro:permchar}
	As random permutons,
	\[
	\mu^{(d)}   \eqdist \mu_{\mathrm{PD}}(1/2, -2^{-d},  L_{d-1}, L_{\mathrm{Brownian}}).
	\]
\end{proposition}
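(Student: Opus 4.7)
The plan is to mirror the proof of Proposition~\ref{pro:char} by combining the Chinese restaurant description of finite patterns sampled from a Poisson-Dirichlet permuton (Lemma~\ref{le:finpdperm}) with Pitman's coagulation-fragmentation duality~\eqref{eq:duality}, and then conclude via the pattern criterion of Proposition~\ref{pro:doit}. I proceed by induction on $d \ge 2$. The base case $d=2$ is trivial because both sides of the claimed identity equal $\mu_{\mathrm{PD}}(1/2, -1/4, L_{\mathrm{Brownian}}, L_{\mathrm{Brownian}})$ by definition.

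For the induction step, fix $k \ge 1$ and apply Lemma~\ref{le:finpdperm} to $\mu^{(d)}$. This expresses $\mathrm{Perm}(k,\mu^{(d)})$ as a substitution $\nu[\nu_1,\ldots,\nu_r]$, where the block sizes $k_1,\ldots,k_r$ are produced by a $(2^{-(d-1)}, -2^{-d})$ Chinese restaurant process, $\nu$ is an independent copy of $\mathrm{Perm}(r,\mu_{\mathrm{Brownian}})$, and each $\nu_i$ is an independent copy of $\mathrm{Perm}(k_i,\mu^{(d-1)})$. Unfolding each $\nu_i$ by the induction hypothesis and Lemma~\ref{le:finpdperm} a second time, the customers inside block $i$ are further partitioned according to a $(1/2,-2^{-(d-1)})$ Chinese restaurant process, the resulting sub-blocks are ordered within block $i$ by a sample of $\mathrm{Perm}(r_i,\mu_{d-2})$, and customers inside each sub-block are ordered by an independent sample of $\mathrm{Perm}(\cdot,\mu_{\mathrm{Brownian}})$.

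The resulting nested partition of $[k]$ is the $\mathrm{FRAG}_k$ construction with parameters $\alpha_1 = 1/2$, $\alpha_2 = 2^{-(d-2)}$ (so that $\alpha_1\alpha_2 = 2^{-(d-1)}$), and $\theta = -2^{-d}$. By~\eqref{eq:duality} it is equal in distribution to $\mathrm{COAG}_k$ with the same parameters: first a $(1/2,-2^{-d})$ CRP producing sub-blocks, then a $(2^{-(d-2)}, -2^{-(d-1)})$ CRP grouping these sub-blocks into super-blocks. Crucially, the duality operates only on the partition structure while preserving the identification of super-blocks, sub-blocks, and customers; hence the permutation data attached to each level can be transported verbatim. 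Reading off the dual construction, the super-blocks are ordered by $\mathrm{Perm}(\cdot,\mu_{\mathrm{Brownian}})$, sub-blocks within a super-block are ordered by $\mathrm{Perm}(\cdot,\mu_{d-2})$, and customers inside each sub-block by $\mathrm{Perm}(\cdot,\mu_{\mathrm{Brownian}})$. The combined outer layer (coagulation plus the two levels of permuton order on its blocks) is, by Lemma~\ref{le:finpdperm} applied in the reverse direction, precisely $\mathrm{Perm}(r',\mu^{(d-1)})$. Thus the dual description coincides with the recipe of Lemma~\ref{le:finpdperm} for $\mu_{\mathrm{PD}}(1/2,-2^{-d},L_{d-1},L_{\mathrm{Brownian}})$ sampled at $k$ points.

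Since this pattern equality holds for every $k \ge 1$, Proposition~\ref{pro:doit} (the equivalence of permuton convergence with consistent pattern distributions, together with the uniqueness of a permuton realising a consistent family) yields $\mu^{(d)} \eqdist \mu_{\mathrm{PD}}(1/2,-2^{-d},L_{d-1},L_{\mathrm{Brownian}})$. The main subtlety is bookkeeping: one has to check that the three layers of order data (Brownian head at the coarse level, $\mu_{d-2}$ head at the intermediate level, Brownian at the atomic level) correspond correctly under the FRAG/COAG identification, and that after coagulation the outer two layers reassemble into a single $\mu_{d-1}$-governed head via Lemma~\ref{le:finpdperm}. Once this correspondence is established, the argument closes exactly as in the graphon case of Proposition~\ref{pro:char}.
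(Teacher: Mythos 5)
Your proposal is correct and follows essentially the same route as the paper, which proves Proposition~\ref{pro:permchar} by arguing as in Proposition~\ref{pro:char}: unfold the finite marginals via Lemma~\ref{le:finpdperm}, identify the nested $(2^{-(d-1)},-2^{-d})$/$(1/2,-2^{-(d-1)})$ structure as $\mathrm{FRAG}_k$, apply the duality~\eqref{eq:duality} with $\alpha_1=1/2$, $\alpha_2=2^{-(d-2)}$, $\theta=-2^{-d}$, and reassemble the order data into the $\mu_{\mathrm{PD}}(1/2,-2^{-d},L_{d-1},L_{\mathrm{Brownian}})$ description. The only cosmetic point is that the final uniqueness step is more precisely supplied by the consistency proposition (item~2 of the proposition quoted from~\cite{zbMATH07286839}) rather than by Proposition~\ref{pro:doit}.
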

\begin{proof}
	Lemma~\ref{le:finpdperm} and Pitman's coagulation-fragmentation duality~\eqref{eq:duality} allow us to argue analogously as in the proof of Proposition~\ref{pro:char}. 
\end{proof}

\subsection{Weighted separable superpermutations}

Given a parameter $q>0$ we consider two-dimensional separable superpermutations where each component receives weight $q$. That is, we consider the class $\cP^{\langle q \rangle} = \cH \circ \cC$ where $\cH = \cP$ with the trivial weighting that assigns weight $1$ to each permutation, and $\cC = \cP$ with the weighting that assigns weight $q$ to each permutation. Hence the weight of a permutation from $\cP^{\langle q \rangle}$ equals $q$ raised to  the number of components, or equivalently, the size of the head-structure. Hence
\[
\cP^{\langle q \rangle}(z) = \cP(q\cP(z)).
\]
Let $\sigma^{\langle q \rangle}_n$ be drawn from all superpermutations of size $n$ in $\cP^{\langle q \rangle}$ with probability proportional to its weight.

\begin{corollary}
	As $n \to \infty$,
	\[
	\sigma^{\langle q \rangle}_n \convd \begin{cases}
		\mu_{\mathrm{Brownian}}, &q \neq \sqrt{2}-1 \\
		\mu^{(2)}, &q = \sqrt{2}-1
	\end{cases}
	\]
	as random permutons.
\end{corollary}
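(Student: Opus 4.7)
The plan is to mirror the corollary for weighted super-cographs and split into three cases $q<\sqrt{2}-1$, $q=\sqrt{2}-1$, and $q>\sqrt{2}-1$, invoking the appropriate regime theorem in each. With $\cH=\cP$ under trivial weights and $\cC=\cP$ with weight $q$ per permutation, we have $h_n=[z^n]\cP(z)$ and $c_n=q\,[z^n]\cP(z)$, so by~\eqref{eq:apar} both sequences decay like $n^{-3/2}\rho_\cP^{-n}$ with a constant slowly varying prefactor, and~\eqref{eq:dadyaa}--\eqref{eq:dadyan} give $\rho_\cC=\rho_\cH=\rho_\cP=3-2\sqrt{2}$ and $\cC(\rho_\cC)=q(\sqrt{2}-1)$. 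The critical equality $\cC(\rho_\cC)=\rho_\cH$ holds precisely when $q=\sqrt{2}-1$, since then $(\sqrt{2}-1)^2=3-2\sqrt{2}$.

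For $q=\sqrt{2}-1$ I verify the dilute hypotheses~\eqref{eq:pgraphcrit}--\eqref{eq:pgraphcondv} with $\alpha=\beta=1/2$ (using $\sum_{k>n}k^{-3/2}\sim 2n^{-1/2}$ to reduce the tail condition in~\eqref{eq:pgraphcondw} to the pointwise asymptotics of $c_n$), so $\theta=-\alpha\beta=-1/4$. The marginals $\sigma_n^\cH$ and $\sigma_n^\cC$ are both uniform random separable permutations of size $n$ and hence converge to $\mu_{\mathrm{Brownian}}$ by~\eqref{eq:mupermuton}, making~\eqref{eq:pcompconv} hold with $L_\cH=L_\cC=L_{\mathrm{Brownian}}$. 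Theorem~\ref{te:main3} then yields $\sigma^{\langle q\rangle}_n\convd \mu_{\mathrm{PD}}(1/2,-1/4,L_{\mathrm{Brownian}},L_{\mathrm{Brownian}})=\mu^{(2)}$. For $0<q<\sqrt{2}-1$ we have $\rho_\cH>\cC(\rho_\cC)$ and the polynomial-times-exponential form of $c_n$ triggers subcase (ii) of the convergent condensation regime, where the required limits $c_n/c_{n+1}\to\rho_\cC$ and $c_n^{-1}\sum_{i+j=n}c_ic_j\to 2\cC(\rho_\cC)$ are automatic. Theorem~\ref{te:perdon} combined with~\eqref{eq:mupermuton} yields $\sigma^{\langle q\rangle}_n\convd\mu_\cC=\mu_{\mathrm{Brownian}}$. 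For $q>\sqrt{2}-1$ we have $\rho_\cH<\cC(\rho_\cC)$, and since $c_n>0$ for every $n\ge 1$ the gcd condition holds; combined with $h_n=\ell_\cH(n)n^{-3/2}\rho_\cH^{-n}$, this places us in subcase (ii) of the dense regime, so Theorem~\ref{te:perdense} gives $\sigma^{\langle q\rangle}_n\convd\mu_\cH=\mu_{\mathrm{Brownian}}$.

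No step presents a real obstacle: the corollary is a direct transcription of its weighted super-cograph counterpart, with all of the heavy lifting done by Theorems~\ref{te:main3},~\ref{te:perdon}, and~\ref{te:perdense}. The only conceptual point worth underlining is that the critical singularity alignment in the composition $\cP(q\cP(z))$ occurs exactly at $q=\sqrt{2}-1$, the unique value for which the inner series $q\cP$ and the outer series $\cP$ reach their radius of convergence simultaneously at $z=\rho_\cP$; this is the algebraic reason the phase diagram exhibits a ``spike'' precisely there.
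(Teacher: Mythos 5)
Your proposal is correct and follows essentially the same route as the paper: the paper's proof likewise checks that $q=\sqrt{2}-1$ is the unique value with $\rho_\cH=\cC(\rho_\cC)$ (via~\eqref{eq:dadyaa},~\eqref{eq:dadyan},~\eqref{eq:apar}) and then invokes Theorem~\ref{te:main3} in the dilute case, Theorem~\ref{te:perdon} (convergent condensation case) for $0<q<\sqrt{2}-1$, and Theorem~\ref{te:perdense} (dense regime) for $q>\sqrt{2}-1$, together with~\eqref{eq:mupermuton} for the marginal limits. Your write-up is merely more explicit than the paper's in verifying the regime hypotheses ($\alpha=\beta=1/2$, $\theta=-1/4$, the gcd condition, and the automatic limits in condensation subcase (ii)), all of which check out.
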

\begin{proof}
	By Equations~\eqref{eq:dadyaa},~\eqref{eq:dadyan} and~\eqref{eq:apar} we are in the dilute regime for $q = \sqrt{2}-1$, and
	\[
	\sigma^{\langle q \rangle}_n \convd \mu^{(2)} = \mu_{\mathrm{PD}}(1/2, -1/4, L_{\mathrm{Brownian}}, L_{\mathrm{Brownian}})
	\]
	by Theorem~\ref{te:main3}. For $0<q<\sqrt{2}-1$ we are the  condensation regime (specifically the convergent case) and  Theorem~\ref{te:perdon} yields
	\[
	\sigma^{\langle q \rangle}_n \convd \mu_{\mathrm{Brownian}}	\]
	For $q> \sqrt{2}-1$ we are in the dense regime and  Theorem~\ref{te:perdense} yields
	\[
	\sigma^{\langle q \rangle}_n \convd \mu_{\mathrm{Brownian}}.
	\]
\end{proof}

\bibliographystyle{abbrv}
\bibliography{pdpg}

\end{document}